%% file: main.tex
\documentclass[twoside,11pt]{article}

\input{packages}
\input{commands}

\jmlrheading{26}{2026}{1-\pageref{LastPage}}{9/26}{-}{}{Florian Heinrichs, Luis A. Rodríguez}

\ShortHeadings{An FCLT for Locally Stationary Time Series in Banach Spaces}{F. Heinrichs, L. Rodríguez}
\firstpageno{1}

\begin{document}
	
\title{A Functional Central Limit Theorem for Locally Stationary Time Series in Banach Spaces}

\author{\name Florian Heinrichs \email f.heinrichs@fh-aachen.de \\
	\addr FH Aachen - University of Applied Sciences\\
	Department of Medical Engineering and Technomathematics
	\AND
	\name Luis A. Rodríguez \email luisalberto.rodriguez@uam.es \\
    \addr Universidad Autónoma de Madrid\\
    Department of Mathematics}

\editor{-}

\maketitle

\begin{abstract}%
	A functional central limit theorem for locally stationary time series taking values in a separable Banach space $B$ is established. The result does not require type-2 or cotype assumptions and therefore covers spaces central to functional data analysis, including $C([0,1])$ and $L^p([0,1])$. 
    
    Under moment conditions, summable physical dependence coefficients, and a bracketing entropy condition controlling the infinite-dimensional tails, the centered and rescaled partial sum process converges weakly in $D([0,1],B)$. The limit is a centered $B$-valued Gaussian process whose covariance is given by the integral of the local long-run covariance, interpreted as an element of the projective tensor product. We also obtain a stochastic integral representation with respect to a cylindrical Brownian motion, connecting the Banach-space limit to the familiar scalar locally stationary structure. 
    
    As an application, we derive a self-normalized CUSUM procedure for detecting changes in the mean of linear projections of Banach-valued observations, yielding a pivotal asymptotic null distribution. The finite-sample behavior is illustrated through Monte Carlo experiments and exploratory applications to EEG recordings and daily temperature curves. Examples based on the Faber–Schauder system in $C([0,1])$ and on a $p$-Laplacian model in $W^{1,p}_0([0,1])$ demonstrate how the entropy condition can be verified.
\end{abstract}

\begin{keywords}
	Locally stationary time series; Banach-valued time series; functional central limit theorem; physical dependence; functional data analysis.
\end{keywords}

% MSC2020 classes
% Primary: 60F17, 60B12
% Secondary: 62R10, 62M10, 60F05

\maketitle 

\input{sec_intro}

\input{sec_methodology}

\input{app_example}

\input{sec_aux_results}

\input{sec_empirical_results}

\input{sec_proofs}

\section*{Declaration of generative AI and AI-assisted technologies in the manuscript preparation process}

During the preparation of this work the authors used GPT-5.5 in order to check grammar and spelling, refine the wording, and double-check mathematical proofs. After using this tool, the authors reviewed and edited the content as needed and take full responsibility for the content of the published article.

\bibliography{bibliography}

\newpage

\appendix

\input{app_proofs}

\end{document}

%% file: packages.tex
\usepackage{blindtext}

\usepackage[preprint]{jmlr2e}

\usepackage[utf8]{inputenc}
\usepackage[T1]{fontenc}
\usepackage{lmodern}
\usepackage{amsmath}
\usepackage{paralist}
\usepackage{float}
\usepackage{accents}
\usepackage{color}
\usepackage{bm}
\usepackage{lscape}
\usepackage{subfig}
\usepackage{longtable}
\usepackage{multibib}
\usepackage{booktabs}
\newcites{suppl}{References} 
\usepackage{tikz}
\usetikzlibrary{positioning}
\usepackage{dsfont}
\usepackage{pgfplotstable}
\usepackage{hyperref}
\usepackage{lastpage}
\usepackage{multirow}
\usepackage{setspace}

%% file: commands.tex
\newcommand{\eps}{{\varepsilon}}
\renewcommand{\phi}{\varphi}
\newcommand{\R}{\mathbb{R}}
\newcommand{\Z}{\mathbb{Z}}
\newcommand{\N}{\mathbb{N}}

\newcommand{\pr}{\mathbb{P}}       
\newcommand{\ex}{\mathbb{E}}       
\newcommand{\var}{\textnormal{Var}} 
\newcommand{\cov}{\textnormal{Cov}}

\newcommand{\Gb}{\mathbb{G}}

\newcommand{\Ac}{\mathcal{A}}

\newcommand{\Cc}{\mathcal{C}}
\newcommand{\Dc}{\mathcal{D}}
\newcommand{\Ec}{\mathcal{E}}
\newcommand{\Fc}{\mathcal{F}}
\newcommand{\Gc}{\mathcal{G}}

\newcommand{\Kc}{\mathcal{K}}
\newcommand{\Lc}{\mathcal{L}}
\newcommand{\Nc}{\mathcal{N}}
\newcommand{\Rc}{\mathcal{R}}
\newcommand{\Tc}{\mathcal{T}}

\newcommand{\Oc}{\mathcal{O}}
\newcommand{\diff}{{\,\mathrm{d}}}

\newcommand{\convw}{\rightsquigarrow}

\newcommand{\id}{\mathds{1}}

\DeclareMathOperator*{\sgn}{sgn}

\newtheorem{assumption}[theorem]{Assumption}

%% file: sec_intro.tex
% !TeX spellcheck = en_US
%% ====================
\section{Introduction} \label{sec:intro}
%% ====================

The last two decades have seen substantial developments along two largely parallel lines. On the one hand, the theory of non-stationary time series with time-varying distributions, and on the other hand, the analysis of function-valued stochastic processes, in particular in Banach spaces. Locally stationary models, formulated as triangular arrays whose distributions vary smoothly over rescaled time $t\in[0,1]$, provide a flexible framework for time-varying dependence structures in econometrics, meteorology and related fields. At the same time, functional data analysis treats observations as curves or more general infinite-dimensional objects, and a substantial limit theory has been developed for Hilbert- and Banach-space valued stochastic processes.  

Despite this progress, general invariance principles for locally stationary Banach-space valued time series remain limited. Existing functional central limit theorems (FCLTs) for locally stationary time series are mainly available for real-valued or finite-dimensional processes \citep{wu2011,dahlhaus2019}, empirical processes \citep{phandoidaen2022}, or processes taking values in separable Hilbert spaces such as $L^2([0,1])$ \citep{bucher2020,kurisu2021}. Conversely, central limit theorems for Banach-space valued time series typically assume independence \citep{hoffmann1976,araujo1978,dudley1983,mies2026} or stationarity \citep{dette2020,bigot2024}, and often impose geometric conditions such as type 2, which exclude important spaces used in functional data analysis, in particular $C([0,1])$ equipped with the supremum norm. Recent exceptions include the locally stationary $C([0,1])$-valued FCLT of \cite{kutta2025} and the uniform relative CLT for non-stationary empirical processes of \cite{palm2025}, but these results do not provide a general invariance principle for the Banach-valued partial sum process itself in $D([0,1],B)$. In particular, the bracketing entropy condition of the latter does not apply to the unit ball of the dual space $B^*$, so that tightness cannot be established. Convergence of all scalar projections does not by itself imply tightness of the original Banach-valued process, and additional control of the infinite-dimensional tail is required.  

The aim of this paper is to close this gap. We consider triangular arrays $\{(X_{i,n})_{1\le i\le n}\}_{n\in\N}$ taking values in a separable Banach space $B$, where $X_{i,n}=H(i/n,\Fc_i)$, for a measurable map $H$ and the past of an i.i.d. innovation sequence $\Fc_i$. Dependence is measured by the physical dependence measure of \cite{wu2005}, which quantifies the effect of replacing one innovation by an independent copy and is often easier to verify than classical mixing conditions.  
For the partial sum process  
\begin{equation*}
    S_n(t)=\frac{1}{n}\sum_{i=1}^{\lfloor nt\rfloor}X_{i,n},\qquad t\in[0,1],    
\end{equation*}
we study the centered and rescaled process $G_n(t)=\sqrt{n}(S_n(t)-\ex[S_n(t)])$ as a random element of $D([0,1],B)$.  

Under moment conditions of order $2+\kappa$, $\kappa>0$, Lipschitz continuity in rescaled time, summable physical dependence coefficients, and an entropy condition controlling the infinite-dimensional tails, we prove that $G_n$ converges weakly in $D([0,1],B)$ to a centered Gaussian process $G$.  
The covariance structure of the limit is given by  
\begin{equation*}
    \cov(G(t),G(s))=\int_0^{t\wedge s}\sigma^2(u)\diff u,    
\end{equation*}
where $\sigma^2(u)$ is the local long-run covariance, interpreted as an element of the projective tensor product $B\hat{\otimes} B$. We do not impose non-degeneracy of $\sigma^2(u)$, so the limiting Gaussian process may be degenerate in some directions of the dual space.  

The Banach space setting is deliberately general. We assume that $B$ is separable and admits a Schauder basis, but we do not impose type-2 or cotype assumptions. This includes classical spaces such as $L^p([0,1])$, $1\le p<\infty$, and in particular covers $C([0,1])$, which is not of type 2 and is central in functional data analysis.

The proof combines finite-dimensional convergence of scalar projections with a tightness argument for the Banach-valued process. The finite-dimensional convergence is obtained by a big-blocks-small-blocks argument and moment bounds derived from the physical dependence measure. Tightness is established by combining finite-rank approximation with a dependence-adjusted maximal inequality for the infinite-dimensional tails. This approach avoids geometric type assumptions and requires only moments of order $2+\kappa$.  

As an illustration, we apply the FCLT to change point testing for projected Banach-valued observations. For a fixed functional $b\in B^*$, we consider self-normalized CUSUM statistics for detecting a single change in the projected mean $\ex[b(X_{i,n})]$. This application is not intended as a full treatment of change point analysis under general local stationarity, but demonstrates how the Banach space FCLT can be used to derive pivotal limits for statistics based on linear projections.  

The remainder of the paper is organized as follows. Section \ref{sec:method} introduces the Banach space framework, the locally stationary model, the main FCLT, and the change point application. Section \ref{sec:aux_results} collects auxiliary results on weak convergence, autocovariances and the local long-run covariance, and a maximal inequality. Section \ref{sec:empirical} contains simulations and empirical illustrations, while Section \ref{sec:proof} gives the proofs of the main results. Additional proofs of examples and auxiliary results are deferred to the appendices.

%% file: sec_methodology.tex
% !TeX spellcheck = en_US
%% ====================
\section{Main Results} \label{sec:method}
%% ====================

\subsection{Mathematical Preliminaries}

\textit{Moments in Banach Spaces.} Let $B$ be a separable Banach space with norm $\| \cdot \|_B$. The central limit theorem is a statement about the weak convergence of the sum of \textit{centered} random variables. The Bochner integral is a generalization of the Lebesgue integral to Banach spaces, and allows a natural definition of centered random variables in Banach spaces.

The Bochner integral is defined, similar to the Lebesgue integral, for simple functions and extended to a broader class of functions through approximating sequences of simple functions \citep[see, e.\,g.,][]{ryan2002}. Importantly, a random variable $X : \Omega \to B$ is Bochner integrable if and only if $X$ is Bochner measurable and $\ex[\|X\|_B] < \infty$, by Theorem 5.1 of \cite{janson2015}. Moreover, since $B$ is separable, Bochner measurability is equivalent to (Borel) measurability, so that every random variable $X$ in $B$ is Bochner measurable, by Remark 3.2 of \cite{janson2015}. 

Different approaches exist to define higher moments of random variables in a Banach space $B$ in terms of tensor products \citep[see][]{janson2015}. In the following, we use a definition based on the \textit{projective tensor product}. Recall that the tensor product $B \otimes B$ is the unique (up to an isomorphism) vector space, such that there is a bilinear map $\otimes: B \times B \to B \otimes B$ with the universal property that for every vector space $W$ and every bilinear map $b : B\times B \to W$ there is a unique linear map $L_b: B \otimes B \to W$ satisfying $L_b(x \otimes y) = b(x, y)$ for all $x, y \in B$. The \textit{projective tensor norm} on $B\otimes B$ is defined by
\[
	\| u \|_\pi = \inf\Big\{ \sum_{j} \| x_{1j}\|_B \|x_{2j}\|_B: u = \sum_{j} x_{1j} \otimes x_{2j} \Big\},
\] 
where the infimum is taken over all different representations of $u$. Let $B \hat{\otimes} B$ denote the completion of $B \otimes B$ with respect to $\| \cdot \|_\pi$. For a random variable $X$ in $B$ with $\ex[\| X \|_B^2] < \infty$, $X \otimes X$ is Bochner integrable with $\ex[X \otimes X] \in B \hat{\otimes} B$ by Theorem 6.7 of \cite{janson2015}. 

For $b_1,b_2\in B^*$, the algebraic tensor $b_1\otimes b_2$ defines a continuous linear functional on $B\otimes B$ by  
\begin{equation*}
    (b_1\otimes b_2)\Big(\sum_{j=1}^m x_j\otimes y_j\Big)
=\sum_{j=1}^m b_1(x_j)b_2(y_j).    
\end{equation*}
Since  
\begin{equation*}
    |(b_1\otimes b_2)u|\le \|b_1\|_{B^*}\|b_2\|_{B^*}\|u\|_\pi,    
\end{equation*}
this functional extends uniquely and continuously to $B\hat{\otimes}B$.  Consequently, if $X,Y$ are square-integrable $B$-valued random variables, then  
\begin{equation*}
    (b_1\otimes b_2)\ex[X\otimes Y] 
    =\ex[(b_1\otimes b_2)(X\otimes Y)]
    =\ex[b_1(X)b_2(Y)].    
\end{equation*}
This identity will be used repeatedly to pass from tensor-valued covariances to scalar covariances of linear projections.  

Similarly, one may define mixed moments of random variables $X, Y \in B$ by taking the expectation of $X \otimes Y$ in $B \hat{\otimes} B$ \citep[see Remark 6.23 in][]{janson2015}. For centered random variables $X$ and $Y$, we can define $\var(X) := \ex[X \otimes X]$ and $\cov(X, Y) := \ex[X \otimes Y]$.

Based on this definition of expectation, we can define the conditional expectation of a random variable given a $\sigma$-algebra. More specifically, let $(\Omega, \Ac, \pr)$ denote a complete probability space and let $\ex[\| X \|_B] < \infty$. Further, let $\Cc \subset \Ac$ denote a sub-$\sigma$-algebra. Then, a unique $\Cc$-measurable random variable $Z$ exists such that $\ex[X \id_C] = \ex[Z \id_C]$ for all $C\in \Cc$ \citep[see, e.\,g., Proposition 1.10 in][]{daprato2014}. As in the real-valued case, $\ex[X | \Cc] := Z$ is the conditional expectation of $X$ given $\Cc$.

\textit{Local Stationarity.} Recall the framework of local stationarity, as proposed by \cite {zhou2009}. 
Let $(E,\Ec)$ be a measurable state space, and let $(\eta_i)_{i\in\Z}$ be an i.i.d. sequence of $E$-valued random elements. Let $(\eta_i^*)_{i\in\Z}$ be an independent copy, and write $\Fc_i = (\eta_i,\eta_{i-1},\dots)$ for the past at time $i$. For $i \ge 0$, let $\Fc_i^*=(\eta_i,\dots,\eta_1,\eta_0^*,\eta_{-1},\eta_{-2},\dots)$, that is, the past in which the innovation at time $0$ is replaced by an independent copy. Let $H:[0, 1] \times E^{\N_0} \to B$ denote a (possibly non-linear) map, such that $H(t, \Fc_i)$ is measurable for all $t\in [0, 1], i\in\N$. 

The \textit{physical dependence measure} offers an alternative to classic mixing coefficients, that is usually simpler to verify. For a map $H$ with $\sup_{t\in[0, 1]} \ex[\|H(t, \Fc_i)\|_B^\rho] < \infty$, the physical dependence measure is defined by 
\[ \delta_\rho(H, i) = \sup_{t\in[0, 1]} \ex\big[\| H(t, \Fc_i) - H(t, \Fc_i^*)\|^\rho\big]^{1/\rho}. \]
For real-valued time series, bounds on $\delta_\rho(H, i)$ follow from strong mixing under moment assumptions \citep{hill2025}. Conversely, strong mixing follows from bounds on $\delta_1(H, i)$ under certain regularity conditions on $\eta$ \citep{heinrichs2026}. Hence, weak physical dependence and mixing conditions are equivalent in many situations.

Finally, a triangular array $\{(X_{i, n})_{1\le i\le n}\}_{n\in\N}$ is called \textit{locally stationary}, if there exists a map $H$, which is continuous in its first argument, such that $X_{i, n} = H(i/n, \Fc_i)$, for all $i = 1, \dots, n$ and $n\in\N$. The map $H$ is \textit{Lipschitz continuous with respect to the $L^2$-norm,} if
\[ \sup_{0 \le s < t \le 1} \ex \big[ \| H(t, \Fc_i) - H(s, \Fc_i) \|^2\big]^{1/2} / |t - s| < \infty. \]
If the expectation of $X_{i, n} = H(i/n, \Fc_i)$ exists, the additive decomposition
\begin{equation*}
    X_{i, n} = \mu_{i, n} + \eps_{i, n},
\end{equation*}
holds, where $\mu_{i, n} = \ex[X_{i, n}]$ and $\eps_{i, n} = X_{i, n} - \ex[X_{i, n}]$. Under local stationarity, $\mu_{i, n} = \mu(i/n)$, where $\mu(t) = \ex[H(t, \Fc_0)]$. Lipschitz continuity of $H$ with respect to the $L^2$-norm implies Lipschitz continuity of $\mu$. 

For Banach spaces $X$ and $Y$, we write $\Lc(X, Y)$ for the Banach space of bounded linear operators from $X$ to $Y$, equipped with the operator norm.

\subsection{A Functional Central Limit Theorem} \label{sec:fclt}

Let $B$ be a separable Banach space and $X_{i,n}$ a locally stationary time series. Further, let 
\[ S_n(t) = \frac{1}{n} \sum_{i=1}^{\lfloor tn \rfloor} X_{i, n} \]
denote the partial sum process, for $t\in [0, 1]$. Note that $S_n(t) \in B$, and $S_n \in D([0, 1], B)$, where the latter denotes the Skorokhod space of càdlàg functions from $[0, 1]$ to $B$. Define the process $G_n = \{G_n(t)\}_{t\in[0, 1]}$ in terms of $G_n(t) = \sqrt{n} (S_n(t) - \ex[S_n(t)])$. 

The following assumptions separate three issues that are usually intertwined in Banach-space invariance principles: finite-dimensional approximation, temporal dependence, and tightness of the infinite-dimensional tail. The first two assumptions ensure that finite-dimensional projections of the partial sums have the expected Gaussian limits, while the tail condition below replaces geometric assumptions on $B$ such as type 2.

We first impose a mild approximation structure on the state space. The Schauder basis is used only to construct finite-rank projections and to formulate a condition ensuring that the part of the process not captured by these projections is asymptotically negligible.

\begin{assumption} \label{assump:space}
    There exists a Schauder basis $(e_k)_{k \in \N}$ in $B$. Let  $(e_k^*)_{k \in \N} \subset B^*$ denote the corresponding coordinate functionals, i.\,e., $e_k^*(e_i) = \id(k=i)$, and $\Pi_k$ the partial sum projections defined by $\Pi_k(x) = \sum_{i=1}^k e_i^*(x) e_i$.
\end{assumption}

\begin{assumption} \label{assump:local_stationarity}
	For the triangular array $\{(X_{i, n})_{1\le i\le n}\}_{n\in\N}$, it holds $X_{i, n}= \mu(i/n) + \eps_{i, n}$, for a deterministic mean function $\mu:[0, 1]\to B$ and a centered, locally stationary error process $\{(\eps_{i, n})_{1\le i\le n}\}_{n\in\N}$ with map $H$, such that the following conditions are satisfied: 
	\begin{enumerate}
        \item The map $H$ is Lipschitz continuous with respect to the $L^2$-norm, and moments of order $2+\kappa$ are uniformly bounded, for some $\kappa > 0$, i.\,e., $\sup_{t\in[0, 1]} \ex[\|H(t, \Fc_0)\|^{2+\kappa}] < \infty$.
		\item $\sum_{k \in \Z} \Theta_{|k|} < \infty$ for $\Theta_k = \sum_{i=k}^\infty \delta_{2+\kappa}(H, i)$, for $\kappa > 0$ as in part 1 and $\delta_{2+\kappa}(H, i)$ is non-increasing.
    \end{enumerate}
\end{assumption}

Assumption \ref{assump:local_stationarity} contains the stochastic regularity conditions used for the scalar central limit arguments. The Lipschitz condition controls the variation of the local approximating stationary process in rescaled time, while the summability of the physical dependence coefficients controls serial dependence uniformly over $t\in[0,1]$.

\begin{remark}
    The (local) long-run variance of $H$ is defined as 
    \[ \sigma^2(t) = \sum_{i=-\infty}^\infty \cov\big(H(t, \Fc_i), H(t, \Fc_0)\big), \]
    for $t\in [0, 1]$. By Lemma \ref{lem:lrv_lipschitz}, sequences $(y_m)_{m\in\N}, (z_m)_{m\in\N}$ of Lipschitz continuous maps $y_m, z_m:[0, 1]\to B$ exist, with $\sup_{t\in[0, 1]} \sum_{m=1}^\infty \|y_m(t)\|_B^2 < \infty$, $\sup_{t\in[0, 1]} \sum_{m=1}^\infty \|z_m(t)\|_B^2 < \infty$
    and $\sigma^2(t) = \sum_{m=1}^\infty y_m(t) \otimes z_m(t)$.
    In particular, $\sigma^2: [0, 1] \to B \hat{\otimes} B$ is continuous and the series converges with respect to the projective norm.

    Note that we do not assume that $\sigma^2(t)$ is positive definite. This is a strictly weaker assumption than the usual requirement $\inf_{t\in[0, 1]}\sigma^2(t) > 0$ in the univariate case. In particular, $\sigma^2(t)$, and as such the limiting distribution in Theorem \ref{thm:fclt} may be degenerate in certain directions $b\in B^*$.
\end{remark}

If $B$ is a Hilbert space or a Banach space of type 2, the type-2 inequality, combined with symmetrization, provides $L^2$-bounds for sums of independent centered random elements. Such bounds can then be used, together with an appropriate tightness criterion, to establish tightness of the corresponding partial sum processes \citep[see][Theorem 10.5]{ledoux2013}. Since the present results are intended to cover Banach spaces beyond those of (co-)type 2, we instead impose a condition on the empirical complexity of the projected tails. This condition says that, after applying sufficiently high finite-rank projections, the remaining stochastic fluctuation is small uniformly in the partial sum parameter.

Let $B_1^*$ denote the unit ball in $B^*$, $I_B: B\to B$ the identity operator and $T_k = I_B - \Pi_k$. Moreover, define
\begin{equation} \label{eq:def_set_entropy}
    \Gc_k = \{f: B\times[0, 1] \to \R | f(x, u) = (b\circ T_k)(x) \cdot \id(u \le t), b\in B_1^*, t\in[0, 1]\},
\end{equation}
and
\begin{equation*}
    V_{n, k} = \|f\|_{2, n} + \sum_{i=1}^\infty \min\{\|f\|_{2, n}, D\, \delta_2(H, i)\}, \qquad \|f\|_{2, n} = \bigg(\frac{1}{n} \sum_{i=1}^n \ex[|f(X_{i, n}, i/n)|^2]\bigg)^{1/2},
\end{equation*}
where $D >0$ is a constant satisfying $D\ge \sup_{f\in\Gc_k} \|f\|_{2, n}$ uniformly in $k$ and $n$. 
Throughout the paper, entropy is used only in the bracketing sense. More precisely, $N_{[]}(\eps, \Gc_k, V_{n,k})$ denotes the smallest number of brackets $[\ell,u]=\{f:\ell\le f\le u\}$, with $V_{n, k}(u-\ell)\le \eps$, needed to cover $\Gc_k$. To keep notation light, we write $N(\eps, \Gc_k, V_{n,k})$ for this bracketing number; ordinary covering entropy (without brackets) is not used below \citep[see][Chapter 2.1 for further details]{vandervaart1996}.
Finally, introduce the notation 
\begin{equation*} 
    \beta(q) = \sum_{j=q}^\infty \delta_2(H, j), \quad q^*(x) = \min\{q\in \N: \beta(q) \le qx\}, \quad r(\delta) = \max\{r > 0 : q^*(r)r\le \delta\},    
\end{equation*}
and $m(n, \delta, k) = r(\delta/D) D \sqrt{n}/\sqrt{\max\{1, \log(k)\}}$ from \cite{phandoidaen2022}. Based on this notation, we can state a regularity condition on the tails $T_k(X_{i, n})$. 

\begin{assumption} \label{assump:entropy}
    It holds $\sup_{f\in\Gc_k} V_{n,k}(f) \le \sigma_k$, 
    \begin{equation*}
        I_k = \limsup_{n\to\infty} \int_0^{\sigma_k} \sqrt{1 \vee \log(N(\eps, \Gc_k, V_{n,k}))} \diff \eps < \infty,
    \end{equation*}
    for all $k\in \N$, and $I_k \to 0$, as $k\to\infty$. Moreover,
    \begin{equation*}
        \frac{1}{\sqrt{n}} \sum_{i=1}^n \ex\Big[\|T_k X_{i, n}\|_B \id\Big\{\|T_k X_{i,n}\|_B > \tfrac{1}{4} m(n, \sigma_k, N(\sigma_k/2, \Gc_k, V_{n,k}))\Big\}\Big] \le c_k,
    \end{equation*}
    uniformly in $n\in\N$, for some sequence $(c_k)_{k\in\N}$ with $c_k \to 0$, as $k\to\infty$.
\end{assumption}

The first part of Assumption \ref{assump:entropy} is a bracketing entropy integral for the class of linear functionals applied to the tail $T_kx$, and the requirement $I_k\to0$ ensures that this class becomes asymptotically negligible as $k\to\infty$. The second part is a truncation condition similar to Lindeberg's \citep[see][Section 3.13.1]{vandervaart1996}. It takes out rare large tail observations, which is the analog of the large-value term in the dependence-adjusted maximal inequality used below.

The next assumption fixes the block lengths used in the big-blocks-small-blocks argument for finite-dimensional convergence. It is formulated separately because it depends only on the decay rate of the physical dependence coefficients and not on the geometry of $B$.

\begin{assumption} \label{assump:sequences}
    Sequences $(\ell_n)_{n\in\N}$ and $(s_n)_{n\in\N}$ exist, with $\ell_n \to\infty$ and $s_n\to\infty$, such that $s_n = o(\ell_n)$ and $\sqrt{n} \Theta_{s_n} \to 0$, as $n\to\infty$.
    Moreover, with $\kappa >0$, as in Assumption \ref{assump:local_stationarity} (2), it holds $\ell_n^{2+2/\kappa} = o(n)$, as $n\to\infty$.
\end{assumption}

\begin{remark}
    The assumptions are rather mild. Assumption \ref{assump:space} only requires existence of a Schauder basis, but unlike most of the literature, does not make any further assumption about the geometry of the space, such as type 2 or cotype 2 assumptions. This setting includes $\big(C([0, 1]), \|\cdot\|_\infty)$ as a special case, which is of particular interest in the field of functional data analysis, but not of type 2. 

    Assumption \ref{assump:local_stationarity} is weaker than usual regularity conditions for non-stationary error processes \citep[see, e.\,g.,][]{zhou2013,bucher2021}. In contrast to the literature, $\delta_{2+\kappa}(H, i)$ is bounded instead of $\delta_4(H, i)$, and it only needs to decay sufficiently fast, not exponentially. Further, $H$ must be Lipschitz continuous with respect to the $L^2$- rather than the $L^4$-norm, and moments of order $2+\kappa$ must be uniformly bounded instead of fourth- or eighth-order moments. One could replace the bound on $\delta_{2+\kappa}(h)$ by a bound on $\delta_2(h)$, at the cost of a moment condition of order $4$; see, for example, the proof of (20) in \cite{heinrichs2025}.  
    
    Assumption \ref{assump:entropy} ensures that $H$ only has negligible mass in the infinite-dimensional tail, such that all relevant variability is asymptotically captured by finite-rank approximations. The (bracketing) entropy condition could be replaced by an assumption on the geometry of $B$, for example, that $B$ is of type $2$.

    Assumption \ref{assump:sequences} is standard \citep[see, e.\,g.,][]{heinrichs2025}, and satisfied for most weakly dependent time series. When $\kappa = 2$, $\ell_n$ must satisfy $\ell_n^3/n \to 0$. For $\kappa = 1$, $\ell_n^4$ must be of order $o(n)$, and for $\kappa = \frac{1}{2}$, $\ell_n^6/n \to 0$.
\end{remark}

While entropy conditions are standard in the literature, they are often difficult to verify. The time series in the following example satisfies Assumptions \ref{assump:local_stationarity} and, more interestingly, \ref{assump:entropy}.

\begin{example} \label{ex:continuous}
    Let $\psi(t) = \id(0 \le t < 1/2) - \id(1/2 \le t < 1)$ and define the Haar functions $\psi_{\ell,j}(t) = 2^{\ell/2} \psi(2^\ell t - j)$. Moreover, define $s_0(t) = 1$, $s_1(t) = t$ and $s_{\ell,j}(t) = 2^{1+\ell/2} \int_0^t \psi_{\ell,j}(u) \diff u$. Then $\{s_0, s_1\} \cup \{s_{\ell,j}\}_{0\le j < 2^\ell, \ell\in\N}$ is the Faber-Schauder system \citep{faber1910}. Let $\Pi_k$ denote the partial sum projection keeping all basis functions up to $\ell = k, 0 \le j < 2^\ell$, and $T_k = I_{C([0, 1])} - \Pi_k$. 

    Fix $\alpha \in (1/2, 1]$, and assume that $\{(X_{i,n})_{1\le i\le n}\}_{n\in\N}$ satisfies
    \begin{equation*}
        X_{i, n} = \sum_{\ell=0}^\infty \sum_{j=0}^{2^\ell - 1} 2^{-(\alpha + 1)\ell} \xi_{i,n,\ell, j} s_{\ell, j},
    \end{equation*}
    where the centered, real-valued random variables $\{\xi_{i, n, \ell, j}: 1\le i\le n, n\in\N, 0 \le j \le 2^\ell -1, \ell \in \N\}$ are (i) uniformly bounded and $m$-dependent in $i$, i.\,e., $\sup_{i, j,\ell, n} |\xi_{i, n, \ell, j}|\le 1$ and the sequence $(\xi_{i, n, \ell, j})_{1\le i\le n}$ is $m$-dependent, and (ii) locally stationary, for fixed $j, \ell, n$.
    
    Then, $\{(X_{i,n})_{1\le i\le n}\}_{n\in\N}$ satisfies Assumptions \ref{assump:local_stationarity} and \ref{assump:entropy}. For a verification of the assumptions, we refer to Appendix \ref{app:proof_example}.
\end{example}

An additional example in the Sobolev space $W^{1,p}_0([0,1])$ based on $p$-Laplace elliptic problems is given in Section \ref{sec:pde_example}.

We can now state the functional central limit theorem. The result combines convergence of all finite-dimensional linear projections with the tail control in Assumption \ref{assump:entropy} to obtain weak convergence of the full $B$-valued partial sum process.

\begin{theorem}\label{thm:fclt}
	Let $B$ be a separable Banach space satisfying Assumption \ref{assump:space} and $\{(X_{i, n})_{1\le i\le n}\}_{n\in\N}$ a locally stationary time series, satisfying Assumptions \ref{assump:local_stationarity}, \ref{assump:entropy} and \ref{assump:sequences}. Then, $G_n \convw G$ as a process in $D([0, 1], B)$, where $G=\{G(t)\}_{t\in[0, 1]}$ is a Gaussian process in $B$ with $\ex[G(t)]=0$ and $\cov(G(t), G(s)) = \int_0^{t \wedge s} \sigma^2(x) \diff x$, for $s, t\in [0, 1]$.
\end{theorem}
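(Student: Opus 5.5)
We follow the classical route for weak convergence in $D([0,1],B)$. We first prove convergence of the finite-rank approximations $\Pi_k G_n$ for each fixed $k$. We then show that the tails $T_k G_n$ are asymptotically negligible uniformly in $t$. Finally, we combine the two through an approximation theorem of Billingsley type (Theorem 3.2 in Billingsley's 1999 book, or its analogue among the weak-convergence results of Section \ref{sec:aux_results}).

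\emph{Step 1: finite rank.} Fix $k$. Then $\Pi_k G_n$ takes values in the finite-dimensional span of $e_1,\dots,e_k$, so it suffices to treat the $\R^k$-valued process $(e_1^*(G_n(t)),\dots,e_k^*(G_n(t)))$. By Cramér–Wold, finite-dimensional convergence reduces to scalar sums $\sum_j a_j b(G_n(t_j))$ with $b$ in the span of $e_1^*,\dots,e_k^*$.

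For these sums we use a big-blocks/small-blocks decomposition with $\ell_n, s_n$ from Assumption \ref{assump:sequences}:
\begin{itemize}
\item[(a)] We replace $\eps_{i,n}$ by $m$-dependent approximations $\ex[\eps_{i,n}\mid\eta_i,\dots,\eta_{i-s_n}]$. The error is $O(\sqrt n\,\Theta_{s_n})\to0$ by the physical dependence moment bound.
\item[(b)] We discard the small blocks, whose contribution has variance of order $s_n/\ell_n\to0$.
\item[(c)] We apply Lyapunov's CLT to the now independent big blocks. The $(2+\kappa)$-moment bound on block sums, of order $\ell_n^{1+\kappa/2}$, together with $\ell_n^{2+2/\kappa}=o(n)$, gives the Lyapunov condition.
\end{itemize}
The variance limit is identified by computing $\operatorname{Var}(b(G_n(t)))\to\int_0^t (b\otimes b)\sigma^2(u)\diff u$. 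This uses the Lipschitz continuity of $H$ in $L^2$ (to freeze $t$ locally within blocks), the autocovariance bounds, and Lemma \ref{lem:lrv_lipschitz}.

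Tightness of $\Pi_kG_n$ in $D([0,1],\R^k)$ follows from a moment bound on increments, $\ex|b(G_n(t)-G_n(s))|^{2+\kappa}\lesssim (|t-s|+n^{-1})^{1+\kappa/2}$, obtained from a Burkholder-type inequality for physically dependent sequences. Together with Billingsley's criterion (Theorem 13.5), this gives $\Pi_kG_n\convw G^{(k)}$, where $G^{(k)}$ is Gaussian with covariance $\int_0^{t\wedge s}(\Pi_k\otimes\Pi_k)\sigma^2$.

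\emph{Step 2: tails (main obstacle).} We need
\[
\lim_k\limsup_n \pr\Big(\sup_t\|T_kG_n(t)\|_B>\eps\Big)=0 .
\]
The key observation is that
\[
\sup_t\|T_kG_n(t)\|_B=\sup_{f\in\Gc_k}\Big|n^{-1/2}\sum_{i=1}^n \big(f(X_{i,n},i/n)-\ex f(X_{i,n},i/n)\big)\Big| ,
\]
since $\|x\|_B=\sup_{b\in B_1^*}|b(x)|$. By separability, $B_1^*$ can be replaced by a countable norming set, which handles measurability.

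We bound this supremum with the dependence-adjusted maximal inequality of Section \ref{sec:aux_results}, in the form of Phandoidaen et al. The expected supremum is bounded by a constant multiple of the bracketing integral $I_k$, plus a truncation term that is exactly the Lindeberg-type quantity in Assumption \ref{assump:entropy}, bounded by $c_k$. Since $I_k\to0$ and $c_k\to0$, Markov's inequality yields the claim.

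The delicate part is matching the truncation level $m(n,\sigma_k,N(\sigma_k/2,\cdot))$ and the norm $V_{n,k}$ to the exact hypotheses of the maximal inequality. Note also that applying the inequality to a class indexed by $t$ through the indicator $\id(u\le t)$ requires treating $f(X_{i,n},i/n)$ as a nonstationary array, which is permitted there.

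\emph{Step 3: limit and conclusion.} We show that $G^{(k)}$ converges in law to a Gaussian $G$ with the stated covariance. The laws of $G^{(k)}$ are Cauchy, since by Step 2 and the portmanteau theorem $\pr(\sup_t\|G^{(k)}-G^{(k')}\|>\eps)$ is controlled by $\limsup_n\pr(\sup_t\|(\Pi_k-\Pi_{k'})G_n\|>\eps)$. Their covariances converge in $B\hat\otimes B$ because $\Pi_k x\to x$ and $\sigma^2$ is a projectively convergent series. The limit is Gaussian as a limit of Gaussians, continuous (hence in $D$), and has covariance $\int_0^{t\wedge s}\sigma^2$, using the tensor identity $(b_1\otimes b_2)\ex[X\otimes Y]=\ex[b_1(X)b_2(Y)]$.

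Finally, $\|G_n-\Pi_kG_n\|_\infty=\sup_t\|T_kG_n(t)\|$ is negligible by Step 2. Billingsley's approximation theorem then gives $G_n\convw G$ in $D([0,1],B)$.
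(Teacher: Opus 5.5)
Your argument is correct. Its two analytic ingredients are the paper's own. The first is the big-blocks/small-blocks proof of convergence of the projected finite-dimensional distributions (Lemma \ref{lem:fdd}). The second is the tail bound $\ex[\sup_t\|T_kG_n(t)\|_B]\le c(I_k+c_k)$ from Theorem \ref{thm:max_inequality}, i.e.\ \eqref{eq:escaping_mass}.

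Where you differ is in how tightness is obtained and how the pieces are assembled.

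\begin{itemize}
\item[] \textbf{The paper's route.} It verifies Aldous' stopping-time criterion for the full $B$-valued process (Proposition \ref{prop:norm_aldous}). The finite-rank part is handled at optional times in Proposition \ref{prop:functional_aldous}, by conditioning on $\Fc_{\tau n}$ and a Burkholder-type moment bound. The paper then appeals to Kallenberg's tightness theorem in $D([0,1],B)$ and identifies subsequential limits through scalar projections (Proposition \ref{prop:measure_equality}). Existence of $G$ is a by-product of tightness, and the tail estimate is what lifts pointwise tightness from $\Pi_kG_n(t)$ to $G_n(t)$.
\item[] \textbf{Your route.} You only need increment moments at deterministic times for the finite-rank processes $\Pi_kG_n$. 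You use the tail estimate twice. First, it makes the laws of $G^{(k)}$ Cauchy in the Prokhorov metric on the Polish space $D([0,1],B)$, which constructs $G$ explicitly with continuous paths. Second, it gives the converging-together condition of Billingsley's Theorem 3.2.
\end{itemize}

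Your route avoids stopping times and the identification-of-limits step entirely. The price is a separate existence argument for $G$.

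Two details should be written out.

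\begin{itemize}
\item[] \textbf{Step 3 needs a coupling.} The event $\{\sup_t\|G^{(k)}(t)-G^{(k')}(t)\|_B>\eps\}$ only makes sense once $G^{(k)}$ and $G^{(k')}$ live on a common probability space. Since $\Pi_k=\Pi_k\Pi_{k'}$ for $k\le k'$, the pair $(\Pi_kG_n,\Pi_{k'}G_n)$ is a continuous image of $\Pi_{k'}G_n$. Hence it converges jointly to $(\Pi_kG^{(k')},G^{(k')})$, with $\Pi_kG^{(k')}\stackrel{\Dc}{=}G^{(k)}$. The portmanteau bound for the open set $\{x:\sup_t\|x(t)\|_B>\eps\}$, together with $d(x,y)\le\sup_t\|x(t)-y(t)\|_B$ for the Skorokhod metric, then controls the Prokhorov distance.
\item[] \textbf{Step 1 uses a scalar criterion.} Billingsley's Theorem 13.5 is stated for real-valued processes, and coordinatewise tightness in $D([0,1],\R)$ does not by itself give tightness in $D([0,1],\R^k)$. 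Either apply the increment bound directly to the Euclidean norm of the $\R^k$-valued process, or use that coordinatewise tightness with continuous limits suffices. The $n^{-1}$ term is harmless, because one of the two increments vanishes when $t_2-t_1<1/n$.
\end{itemize}
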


The covariance formula is to be understood in the projective tensor sense introduced above. Equivalently, for $b_1,b_2\in B^*$, the covariance of the scalar projections $b_1(G(t))$ and $b_2(G(s))$ is obtained by applying $b_1\otimes b_2$ to $\int_0^{t\wedge s}\sigma^2(x)\diff x$. No non-degeneracy assumption is imposed on $\sigma^2(u)$, so the limit may be degenerate in some directions of $B^*$.

Whenever $u\in B \hat{\otimes} B$ is interpreted as an operator, we use the canonical map  
\begin{equation*}
    \Phi:B\hat{\otimes} B\to \Lc(B^*,B),\qquad \Phi(u\otimes v)(b)=b(v)u,    
\end{equation*}
extended linearly and continuously. With this convention, $\ex[X\otimes Y]$ corresponds to the covariance operator $b\mapsto \ex[b(Y)X]$, while applying $b_1\otimes b_2$ to $\ex[X\otimes Y]$ gives $\ex[b_1(X)b_2(Y)]$. The FCLT itself only requires the tensor interpretation, but the operator notation is useful when discussing the stochastic integral representation of the limiting Gaussian process. This representation is not needed for the weak convergence statement itself, but it connects the limit to the usual Brownian motion representation familiar from scalar locally stationary time series.

\begin{corollary}\label{cor:representation_limit}
    Under the assumptions of Theorem \ref{thm:fclt}, the Gaussian process $G$ can be written as
    \[
        G(t) = \int_0^t \sigma(u) \diff W(u),
    \]
    for $t\in [0, 1]$, a cylindrical Brownian motion $W=\{W(t)\}_{t\in[0,1]}$ in a Hilbert space $H$ and a family of linear bounded operators $\{\sigma(u)\}_{u\in[0, 1]}$, such that $\sigma(u): H \to B$ with $\sigma^2(u) = \sigma(u) \sigma^*(u)$ and $\sigma^*(u): B^* \to H$, for $u\in [0, 1]$.
\end{corollary}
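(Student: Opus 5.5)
The plan is to build the representation on the Cameron--Martin (reproducing kernel) space of the limit $G$ from Theorem \ref{thm:fclt}. Let $\mathcal{H}_0$ be the reproducing kernel Hilbert space of the centered Gaussian measure of the whole path. Equivalently, work directly with the local covariances. By Lemma \ref{lem:lrv_lipschitz} we have $\sigma^2(u)=\sum_m y_m(u)\otimes z_m(u)$. We first show that each $\sigma^2(u)$ is a symmetric, nonnegative element of $B\hat\otimes B$, meaning $(b\otimes b)\sigma^2(u)\ge 0$ and $(b_1\otimes b_2)\sigma^2(u)=(b_2\otimes b_1)\sigma^2(u)$. Both properties hold because, for every $b\in B^*$, the number $(b\otimes b)\sigma^2(u)$ is the scalar long-run variance $\sum_i\cov(b(H(u,\Fc_i)),b(H(u,\Fc_0)))\ge 0$ of a stationary real series. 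Consequently $\Phi(\sigma^2(u)):B^*\to B$ is a positive symmetric operator. Being in the image of the projective tensor product, it is nuclear, so it is in particular the covariance operator of a Gaussian measure $\gamma_u$ on $B$.

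Next we factorize each local covariance. For fixed $u$, let $H_u$ be the completion of $\Phi(\sigma^2(u))B^*$ under the inner product $\langle \Phi(\sigma^2(u))b_1,\Phi(\sigma^2(u))b_2\rangle=(b_1\otimes b_2)\sigma^2(u)$. This is the standard RKHS factorization: the inclusion $i_u:H_u\to B$ is bounded, and $i_u i_u^*=\Phi(\sigma^2(u))$. To obtain a single Hilbert space $H$, we set $H=L^2([0,1];\ell^2)$, or more simply take one fixed separable Hilbert space $H=\ell^2$ and compose each $i_u$ with an isometry $H_u\hookrightarrow \ell^2$. The isometries must be chosen measurably in $u$. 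For this we use the Gram--Schmidt procedure applied to $\{\Phi(\sigma^2(u))e_k^*\}$, because the coordinate functionals of Assumption \ref{assump:space} separate points. Continuity of $\sigma^2$ in the projective norm, from the Remark after Assumption \ref{assump:local_stationarity}, makes $u\mapsto \sigma(u)^*b$ measurable, with $\sup_u\|\sigma(u)\|^2\le\sup_u\|\sigma^2(u)\|_\pi<\infty$. This yields $\sigma(u):H\to B$ with $\sigma(u)\sigma^*(u)=\sigma^2(u)$.

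The stochastic integral is then defined weakly. Let $W$ be a cylindrical Brownian motion on $H$. For $b\in B^*$ the scalar integral $\int_0^t\langle \sigma^*(u)b,\diff W(u)\rangle$ is well defined by the Itô isometry, and its variance is $\int_0^t(b\otimes b)\sigma^2(u)\diff u$. The main obstacle is to show that $\int_0^t\sigma(u)\diff W(u)$ exists as a genuine $B$-valued random variable, and not only as a cylindrical one, since $B$ need not be of type 2. Our first approach avoids this difficulty by not proving $\gamma$-radonifying estimates directly. Instead we use the already established limit $G$. The Gaussian process $G$ from Theorem \ref{thm:fclt} exists in $D([0,1],B)$ and has covariance $\int_0^{t\wedge s}\sigma^2$. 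On a suitable extension of the probability space, we construct $W$ from $G$: the map $b(G(t))\mapsto\int_0^t\langle\sigma^*(u)b,\diff W(u)\rangle$ is an isometry between the Gaussian Hilbert space spanned by $\{b(G(t))\}$ and $L^2([0,1];H)$. Any part of $L^2([0,1];H)$ not in its range is filled with an independent cylindrical Brownian motion. With this $W$, the weak integral equals $b(G(t))$ almost surely for every $b$. Since $B$ is separable, countably many $b$, for example the $e_k^*$ suitably normalized, determine elements of $B$. Hence the weak integral is represented by the $B$-valued variable $G(t)$, and we define $\int_0^t\sigma\diff W:=G(t)$, which is then $B$-valued.

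Finally, we check that the equality of the covariances, $\cov(G(t),G(s))=\int_0^{t\wedge s}\sigma(u)\sigma^*(u)\diff u$, holds in $B\hat\otimes B$ when tested against all $b_1\otimes b_2$. Together with Gaussianity, this identifies the laws of the two sides and completes the proof.
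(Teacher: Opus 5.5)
Your proof is correct. Up to the factorization it follows the paper: the form $(b\otimes c)\sigma^2(u)$ is symmetric and positive semidefinite, its RKHS $H_u$ gives $i_u i_u^*=\Phi(\sigma^2(u))$, and Gram--Schmidt with continuous Gram entries produces measurable maps into $\ell^2$. It departs from the paper at two points.

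The first point is the countable family used for Gram--Schmidt. The paper builds a $u$-independent set $D\subset B^*$ with $A_u(D)$ dense in every $H_u$, using compactness of $\overline{\{Q_ub\}}$ and separability of $C(K)$. You simply take the coordinate functionals, and in fact any separating sequence in $B^*$ would do. This is legitimate and simpler, but the reason is not that $\operatorname{span}\{e_k^*\}$ is large in $B^*$; it is not norm dense, e.g.\ for $C([0,1])$. You need the following chain, which you should write out. If $h\perp i_u^*e_k^*$ for all $k$, then $e_k^*(i_uh)=0$ for all $k$, so $i_uh=0$. Then $h=0$, because $\ker i_u=(i_u^*B^*)^\perp=\{0\}$.

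The second point is more substantial: how the integral is shown to exist as a $B$-valued variable. The paper cites Theorem 4.2 of \cite{vanNeerven2005}, using that $\int_0^1Q_u\diff u$ is the covariance of $G(1)$. This gives integrability against an arbitrary cylindrical Brownian motion. The paper then identifies laws via Proposition \ref{prop:measure_equality}, which tacitly needs a version of $\tilde G$ in $D([0,1],B)$.

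You instead build $W$ out of $G$. Take the isometry $b(G(t))\mapsto\id_{[0,t]}\sigma^*(\cdot)b$ from the Gaussian Hilbert space of $G$ into $L^2([0,1];\ell^2)$, and add independent noise on the orthogonal complement. Phrase it this way, not as a map to $\int_0^t\langle\sigma^*b,\diff W\rangle$, which presupposes $W$. This yields an isonormal process with $b(G(t))=\int_0^t\langle\sigma^*(u)b,\diff W(u)\rangle$ a.s.\ for each $b$. So $G(t)$ is, by definition, the weak stochastic integral.

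Your route is self-contained and gives an almost sure rather than distributional representation, on an enlarged probability space. It also makes your final law identification redundant. What it does not give, unlike the paper's route, is integrability against a prescribed cylindrical Brownian motion.

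Three small fixes:
\begin{enumerate}
\item Symmetry of $(b_1\otimes b_2)\sigma^2(u)$ does not follow from the diagonal values alone. Use stationarity and reindex $i\mapsto-i$, as the paper does.
\item An isometry $J_u:H_u\to\ell^2$ must enter through its adjoint, $\sigma(u)=i_uJ_u^*$.
\item The claim that each $\Phi(\sigma^2(u))$ is a Gaussian covariance is true but needs a domination argument. You never use it, so drop it.
\end{enumerate}
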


By Assumption \ref{assump:local_stationarity}, $\sigma^2(t) \in B\hat{\otimes} B$. According to the corollary, $\sigma(t): H \to B$ and $\sigma^*(t): B^* \to H$. Equality of the operator $\sigma(t) \sigma^*(t): B^* \to B$ and the tensor $\sigma^2(t)$ is to be understood in the canonical sense, using the identification $\Phi$. Hence, the equality $\sigma^2(t) = \sigma(t) \sigma^*(t)$ means that $\Phi(\sigma^2(t)) = \sigma(t) \sigma^*(t)$. 

\subsection{Change Point Detection} \label{sec:self_normalization}

A natural application of functional central limit theorems for dependent data is change point detection, where one tests whether the mean of a time series remains constant over time. By Theorem \ref{thm:fclt}, the limiting distribution of CUSUM-type statistics under the null hypothesis depends on the (operator-valued) long-run variance, whose estimation is delicate. This is particularly true if the long-run variance is allowed to vary over time. To avoid an estimation of the long-run variance, we use self-normalization as  proposed by \cite{shao2010}, which yields a pivotal limiting distribution. Accordingly, we focus on the setting of a constant long-run variance and the ``at most one change'' alternative, and conduct inference for changes in the mean of projected observations $b(X_{i, n})$. 

More specifically, let $b \in B^*$ be a linear functional with $\|b\|_{B^*} = 1$ and let $\mu_i = \ex[b(X_{i, n})]$. We are interested in the hypotheses
\begin{equation*}
    H_0: \mu_1 = \mu_2 = \dots = \mu_n \quad \mathrm{vs.} \quad H_1: \mu_1 = \dots = \mu_{\lfloor \theta n\rfloor } \neq \mu_{\lfloor \theta n\rfloor+1} = \dots = \mu_n,
\end{equation*}
for some $\theta \in (0, 1)$. Let $T_n(t) = b(S_n(t)) = \tfrac{1}{n}\sum_{i=1}^{\lfloor nt\rfloor} b(X_{i, n})$, and define the test statistic
\begin{equation*}
    \mathbf{T}_n = \max_{k=1}^{n-1} \frac{|D_n(k/n)|}{V_n(k/n)},
\end{equation*}
where
\begin{equation*}
    D_n(t) = \sqrt{n}\big( T_n(t) - \tfrac{\lfloor nt\rfloor}{n} T_n(1) \big)
\end{equation*}
and 
\begin{equation*}
    V_n^2(t) = \sum_{i=1}^{\lfloor nt\rfloor} \big( T_n(\tfrac{i}{n}) - \tfrac{i}{\lfloor nt\rfloor} T_n(t) \big)^2 + \sum_{i=\lfloor nt\rfloor + 1}^{n} \Big( T_n(1) - T_n(\tfrac{i}{n}) - \tfrac{n-i}{n-\lfloor nt\rfloor} \big(T_n(1) - T_n(t)\big) \Big)^2.
\end{equation*}
Finally, let $q_{1-\alpha}$ denote the $(1-\alpha)$-quantile of 
\begin{equation} \label{eq:test_limit}
    \sup_{t\in [0, 1]} \frac{|W(t) - t W(1)|}{\Big( \int_0^t \big(W(x) - \tfrac{x}{t}W(t)\big)^2 \diff x + \int_t^1 \big(W(1) - W(x) - \tfrac{1-x}{1-t}[W(1) - W(t)]\big)^2 \diff x  \Big)^{1/2}},
\end{equation}
where $W = \{W(t)\}_{t\in(0, 1)}$ denotes a standard Brownian motion in $\R$.

\begin{corollary} \label{cor:test_properties}
    Let $B$ be a Banach space satisfying Assumption \ref{assump:space}, and $\eps = \{(\eps_{i, n})_{1\le i\le n}\}_{n\in\N}$ be a centered error process satisfying Assumptions \ref{assump:local_stationarity}, \ref{assump:entropy} and \ref{assump:sequences}, and $b\in B^*$ with $\|b\|_{B^*} = 1$, such that $(b \otimes b)\sigma^2(u) = \sigma_b^2 \in (0, \infty)$ for all $u\in[0,1]$. Finally, let $X_{i, n} = \mu + \delta\, \id(i > \lfloor \theta n \rfloor) + \eps_{i, n}$, for some $\delta \in B$. Then, the decision rule
    \begin{equation*}
        \mathbf{T}_n > q_{1-\alpha}
    \end{equation*}
    defines a test that has asymptotically level $\alpha$ under $H_0$ and is consistent against $H_1$.
\end{corollary}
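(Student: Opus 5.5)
The plan is to deduce everything from Theorem \ref{thm:fclt} by the continuous mapping theorem, applied to the scalar projection $b$. Write $G_n^\eps(t) = \sqrt{n}\,\frac1n\sum_{i\le \lfloor nt\rfloor}\eps_{i,n}$. By Theorem \ref{thm:fclt}, $G_n^\eps \convw G$ in $D([0,1],B)$. Since $x\mapsto b\circ x$ is continuous from $D([0,1],B)$ to $D([0,1],\R)$, we get $b(G_n^\eps)\convw b(G)$. The limit $b(G)$ is a centered real Gaussian process with covariance $(b\otimes b)\int_0^{s\wedge t}\sigma^2(x)\diff x = \sigma_b^2 (s\wedge t)$, using the identity $(b_1\otimes b_2)\ex[X\otimes Y]=\ex[b_1(X)b_2(Y)]$ and the constancy of $(b\otimes b)\sigma^2(u)$. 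Hence $b(G)=\sigma_b W$ for a standard Brownian motion $W$, and $b(G)$ has continuous paths.

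\emph{Null hypothesis.} Under $H_0$ the mean $b(\mu)$ cancels in $D_n$ and in $V_n$, because both are built from differences $T_n(t)-\frac{\lfloor nt\rfloor}{n}T_n(1)$ and their analogues. Thus we may replace $X_{i,n}$ by $\eps_{i,n}$, with an $O(1/\sqrt n)$ error from $\lfloor nt\rfloor/n$ versus $t$. We then have $D_n(t)=\Gamma_n(t)-\frac{\lfloor nt\rfloor}{n}\Gamma_n(1)$ with $\Gamma_n=b(G_n^\eps)$. Also $V_n^2(t)$ equals $\frac1n\sum_i(\cdots)^2$ of $\Gamma_n$-quantities: each summand is $n^{-1}$ times the square of a term in $\Gamma_n$, so $V_n^2$ is a Riemann sum. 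Therefore $(D_n,V_n)$ is a functional of $\Gamma_n$. This functional converges, via the Skorokhod representation and uniform convergence to the continuous limit, to
\[
\Big(\sigma_bB(t),\ \sigma_b\Big(\int_0^t(W(x)-\tfrac xtW(t))^2\diff x+\int_t^1(\cdots)^2\diff x\Big)^{1/2}\Big),
\]
where $B(t)=W(t)-tW(1)$. The factor $\sigma_b$ cancels, which gives the pivotal limit \eqref{eq:test_limit}.

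\emph{Main obstacle: the endpoints.} The map $x\mapsto\sup_t|D(t)|/V(t)$ is only continuous on $[\epsilon,1-\epsilon]$, because $V(t)\to0$ as $t\to0$ or $t\to1$. I would first prove the result for the maximum over $k\in[\epsilon n,(1-\epsilon)n]$, using that $V>0$ a.s. on compacts, so the map is continuous at a.e. limit path. I would then let $\epsilon\to0$, either by arguing as in \cite{shao2010}, or by noting that the limit supremum is a.s. finite and the boundary contribution is controlled. If this does not work, I would restrict the test to trimmed $t$, as is customary. Continuity of the limit distribution at $q_{1-\alpha}$ then yields asymptotic level $\alpha$.

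\emph{Alternative.} Under $H_1$ we have $b(\delta)\neq0$. At $t=\theta$, $D_n(\theta)=\sqrt n\,b(\delta)\,\theta(1-\theta)+O_P(1)$, which diverges. For the denominator, the drift term $b(\delta)$ contributes nothing within each segment at $k=\lfloor\theta n\rfloor$. The denominator uses segment-wise centering, and the mean is piecewise constant with its break exactly at $t=\theta$, so the centered partial sums are exactly those of the errors. Hence $V_n(\theta)=O_P(1)$ by the same convergence argument. Consequently $\mathbf T_n\ge |D_n(\theta)|/V_n(\theta)\to\infty$ in probability, which gives consistency.
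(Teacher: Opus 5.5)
Your overall route is the paper's. You project the FCLT through $b$, identify $b(G)\stackrel{\Dc}{=}\sigma_b W$, and let $\sigma_b$ cancel in the ratio. Under $H_1$ you use that at $k=\lfloor\theta n\rfloor$ the segment-wise centering in $V_n$ removes the drift exactly, while $|D_n(\theta)|$ grows like $\sqrt{n}\,\theta(1-\theta)|b(\delta)|$. The one place you depart from the paper is your ``main obstacle'', and that paragraph rests on a false claim. The limiting normalizer $V$ (the denominator in \eqref{eq:test_limit}) does \emph{not} tend to $0$ at the endpoints.

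The self-normalizer is two-sided. As $t\to0$ the first integral vanishes, but the second tends to $\int_0^1 (W(x)-xW(1))^2\diff x>0$; symmetrically as $t\to1$. More precisely, $V^2$ is a.s.\ continuous on $[0,1]$. For every $t$, at least one of $[0,t]$, $[t,1]$ is nondegenerate, and the corresponding integral vanishes only if $W$ is linear on that interval, which a.s.\ happens for no $t$. Hence $\inf_{t\in[0,1]}V(t)>0$ a.s., while the numerator $W(t)-tW(1)$ vanishes at $t=0$ and $t=1$.

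So the ratio functional is continuous at almost every limit path on the whole of $[0,1]$. Your Skorokhod-representation argument then applies directly to $\max_{1\le k\le n-1}$. Both $D_n(k/n)$ and $V_n^2(k/n)$ converge uniformly in $k$; for $V_n^2$ this follows from a Riemann-sum argument using uniform continuity of $W$. Uniform convergence of the denominator to a limit bounded away from zero then gives uniform convergence of the ratio.

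As written, the boundary treatment is therefore the loose end of your proposal. The step ``let $\epsilon\to0$'' is only asserted (``the boundary contribution is controlled''). The fallback of trimming would prove a result for a different statistic than $\mathbf{T}_n$, which maximizes over all $k=1,\dots,n-1$. Replacing that paragraph by the positivity of $V$ closes the argument. The paper itself passes from the convergence of $D_n$ and $V_n^2$ to that of $\mathbf{T}_n$ without comment, so stating $\inf_t V(t)>0$ explicitly is worth keeping in your write-up.
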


In the Banach-valued model $X_{i, n} = \mu + \delta\, \id(i > \lfloor \theta n \rfloor) + \eps_{i, n}$, the projected alternative $H_{1}$ holds if and only if $b(\delta) \neq 0$. If $b(\delta)=0$, the Banach-valued mean may change in a direction not detected by $b$, but the projected observations $b(X_{i,n})$ have a constant mean and satisfy the null hypothesis. Accordingly, the test has asymptotic level $\alpha$ in this case and is consistent only against changes visible through the chosen functional $b$.  

In the context of locally stationary time series, the ``at most one change'' setting is not the most appropriate. Instead, we are often interested in detecting gradual changes of a smoothly varying mean $\mu(t) = \ex[H(t, \Fc_0)]$, where the long-run variance might vary over time. One approach would be to show the joint convergence of a bootstrap approximation and the partial sum processes, as suggested by \cite{bucher2020} in the space $L^2([0, 1])$. Another option would be to extend self-normalization procedures, based on a suitable permutation of the sample as proposed by \cite{heinrichs2021}, from $\R$ to arbitrary Banach spaces. All of these approaches require their own central limit theorems, which can presumably be derived with the same arguments as Theorem \ref{thm:fclt}.

%% file: app_example.tex
% !TeX spellcheck = en_US
%% ====================
\subsection{A Sobolev-space example satisfying the entropy condition} \label{sec:pde_example}
%% ====================

In the following, we provide another example of a time series satisfying Assumption \ref{assump:local_stationarity} and \ref{assump:entropy}. For the sake of completeness, we recall some well known definitions in the context of partial differential equations. Denote by $W^{1,p}(D)$ the $p$-Sobolev space of order $1$, $p\in[1,\infty]$, on a bounded domain $D\subset\R^{d}$ \citep[see][Chapters 8 and 9]{Brezis2010}. The norm of this space is
\[
    \|u\|_{W^{1,p}(D)}=\Big(\|u\|_{L^{p}(D)}^{p}+\sum_{i=1}^{d}\left\|\partial_{i}\,u(x)\right\|_{L^{p}(D)}^{p}\Big)^{\frac{1}{p}},\quad u\in W^{1,p}(D),
\]
where the partial derivatives $\left\{\partial_{i}\right\}_{i\in\{1,\ldots,d\}}$ (and all the differential operators seen in this subsection) are understood in the weak or distributional sense. For $p=2$, the space is also known as the Hilbert-Sobolev space, and is a Hilbert space.

Define the $p$-Laplace operator $-\Delta_{p}:W^{1,p}(D)\to\left(W^{1,p}(D)\right)^{\ast}$ by
\[
    (-\Delta_{p}\,u)(v)=\int_{D}\|\nabla u(x)\|^{p-2}\langle\nabla u(x),\nabla v(x)\rangle\,\diff x.
\]
The $p$-Laplace operator is the G\^{a}teaux derivative of the non-linear functional
\[
    \Phi_{0}(u)=\frac{1}{p}\,\int_{D}\|\nabla u(x)\|^{p}\,\diff x,
\]
so it naturally appears in partial differential equations associated with optimization problems where the functional $\Phi_{0}$ is involved. Consider the family of elliptic problems $-\Delta_{p}u=F_{t}$ indexed by $t\in[0,1]$, where $\Delta_{p}$ is the $p$-Laplace operator, $u\in W_{0}^{1,p}(D)$ with $W_{0}^{1,p}(D)=\left\{u\in W^{1,p}(D)\ \text{s.t.}\ u|_{\partial D}\equiv0\right\}$ and $F_{t}\in (W_{0}^{1,p}(D))^{\ast}$, whose weak formulation is
\begin{equation}\label{Eqn:WeakProblem}
    \int_{D}\|\nabla u(x)\|^{p-2}\langle\nabla u(x),\nabla v(x)\rangle\,\diff x-F_{t}(v)=0,\quad v\in W_{0}^{1,p}(D).
\end{equation}
For fixed $t\in[0,1]$, solving the elliptic problem \eqref{Eqn:WeakProblem} is equivalent to finding the critical points of the functional
\begin{equation}\label{Eqn:DirichletEnergy}
    \Phi_{F_{t}}(u)=\frac{1}{p}\,\int_{D}\|\nabla u(x)\|^{p}\,\diff x-F_{t}(u).
\end{equation}
For the sake of simplicity, let us take $d=1$, so $\Delta_{p}u=(|u^{\prime}|^{p-2}\,u^{\prime})$, and $D=[0,1]$. Fix the Banach space $B=W_{0}^{1,p}([0,1])$, where we can use the equivalent norm $\|u\|_{B}=\left\|u^{\prime}\right\|_{L^{p}(D)}$, due to Poincar\'{e}'s inequality \citep[see][Corollary 8.13]{Brezis2010}. Recall that by Proposition 8.14 of \cite{Brezis2010}, we have that for every $b\in B^{\ast}$, there exist $f_{b,0},f_{b,1}\in L^{p^{\prime}}([0,1])$, such that for all $u\in B$, $b(u)=\int_{[0,1]} f_{b,0}(x)\,u(x)+f_{b,1}(x)\,u^{\prime}(x) \,\diff x$, where $p^{\prime}=p/(p-1)$ is the H\"{o}lder dual exponent. We will formally denote $b=f_{b,0}\oplus(-f_{b,1}^{\prime})$.

As in Example \ref{ex:continuous}, define $\psi(x)=\id(0 \le x < 1/2 )-\id(1/2 \le x < 1)$ and the set $\{e_{\ell,j}\}_{\ell,j}$, $\ell\in\N$, $j\in\{0,\ldots,2^{\ell}-1\}$ as $e_{\ell,j}(x)=2^{\ell/p}\,\int_{[0,x]}\psi(2^{\ell}\,s-j)\diff s$. Note that we are taking the elements of the Faber-Schauder basis that lie in $B$ appropriately normalized so $\|e_{\ell,j}\|_{B}=1$. Furthermore, it can be shown that the dual basis is given by $e_{\ell,j}^{\ast}=0\oplus(-2^{\ell\,(p-2)/p}\,e_{\ell,j}^{\prime\prime})$.

Let $(\eta_{j})_{j\in\Z}$ be independent and identically distributed random variables with $|\eta_{j}|\leq M$, and define
\[
    F_{i,t}=a(t)\,\sum_{k=0}^{\infty}\lambda^{k}\,\eta_{i-k}\,e_{k}^{\ast},
\]
where $a(t)=\exp(t/(t-1))\,\id_{[0,1)}(t)$, $|\lambda|<1$, and $\left\{e_{k}\right\}_{k\in\N}$ is the relabeling given by $k=2^{\ell}+j-1$.  The family of elliptic problems $\Delta_{p}u=F_{i,t}$, or equivalently in integral form
\begin{equation}\label{Eqn:WeakProblem1D}
    \int_{[0,1]}\left(\left|u^{\prime}(x)\right|^{p-2}\,u^{\prime}(x)-a(t)\,\sum_{k=0}^{\infty}\lambda^{k}\,\eta_{i-k}\,e_{k}^{\prime}(x)\right)\,v^{\prime}(x)\,\diff x=0,\quad v\in B,
\end{equation}
has a unique solution $u_{i, t}$. Define the model through $H(t,\Fc_{i})=u_{i,t}$, where $\Fc_{i}=(\eta_k)_{k \le i}$ satisfies the definition of Section \ref{sec:intro}. Then, $X_{i,n}=u_{i,i/n}$ satisfies Assumptions \ref{assump:local_stationarity} and \ref{assump:entropy}. For a verification of the assumptions, we refer to Appendix \ref{app:proof_example_pde}.

\begin{remark}
    \begin{enumerate}
        \item The $p$-Laplacian, $\Delta_pu=\operatorname{div}(|\nabla u|^{p-2}\nabla u)$, is a classical nonlinear counterpart of the Laplace operator, which is recovered for $p=2$. It arises in several physical models involving nonlinear constitutive laws, including non-Newtonian power-law fluids, flow through porous media, and torsional creep. In particular, its derivation from nonlinear extensions of Darcy's law is discussed in \cite{BenediktEtAl2018} and \cite{GirgEtAl2025}. Of particular interest here are its applications in image and data processing. Nonlinear $p$-Laplacian diffusion, as well as its discrete, nonlocal and graph-based counterparts, has been applied to image filtering, segmentation, inpainting and clustering, providing nonlinear alternatives to standard Laplacian diffusion and allowing different degrees of smoothing and edge preservation \citep[see, e.\,g.,][]{ElmoatazEtAl2015}. Moreover, a functional central limit theorem for random solutions can be transferred, via the continuous mapping or delta method, to relevant quantities derived from the solution, such as spatial averages or the $p$-Dirichlet energy \eqref{Eqn:DirichletEnergy}.
        \item Given the subset of the Faber-Schauder basis of the previous example $\{e_{\ell,j}\}_{\ell,j}$, $\ell\in\N$, $j\in\{0,\ldots,2^{\ell}-1\}$, it is worth adding a comment on the interpretation of the dual basis $\{e_{\ell,j}^{\ast}\}_{\ell,j}$. Recall that for every $\varphi\in B$,
        \[
            e_{\ell,j}^{\ast}(\varphi)=\int_{[0,1]}\varphi^{\prime}(x)\,e_{\ell,j}^{\prime}(x)\,\diff x=2^{\frac{\ell\,(p-2)}{p}}\,\left(-\varphi\left(\frac{j}{2^{\ell}}\right)+2\,\varphi\left(\frac{j+1}{2^{\ell+1}}\right)-\varphi\left(\frac{j+1}{2^{\ell}}\right)\right),
        \]
        and by definition of $B$ also that $\varphi(0)=\varphi(1)=0$. In other words, the dual basis $\{e_{\ell,j}^{\ast}\}_{\ell,j}$ is formally a combination of pointwise evaluations. This fact links directly to the foundations of functional data analysis \citep[see][Chapter 1]{Ramsay&Silverman2005}, where in this context our ambient space is $B^{\ast}$.
        \item It is worth mentioning that Sobolev spaces provide a natural framework for functional data whenever the information carried by the derivatives is relevant, since their topology simultaneously controls a function and its (weak) derivatives in $L^{p}$-norm. This perspective is closely related to classical functional data analysis, where smoothness, derivative estimation, roughness penalties and differential operators have played a central role since its early development \citep{Ramsay&Silverman2005}. More explicitly, Sobolev spaces have been used to incorporate jointly curves and their derivatives into functional statistical procedures \citep[see, e.\,g.,][]{Aletti2016}. This techniques remain active in recent functional classification and clustering methods based on derivative information \citep{Ahpine&Yao2025}.
        Sobolev spaces also provide a flexible hierarchy of state spaces through their classical embedding properties. On a bounded regular domain $D\subset\mathbb{R}^{d}$, $W^{1,p_{2}}(D)\subset W^{1,p_{1}}(D)$ for $p_{1}<p_{2}$, while $W^{1,2}(D)$ is a Hilbert space. Moreover, the Gagliardo--Nirenberg--Sobolev inequality yields $W^{1,p}(D)\hookrightarrow L^{p^{\ast}}(D)$, $p^{\ast}=d\,p/(d-p)$, for $p<d$, whereas Morrey's theorem yields $W^{1,p}(D)\hookrightarrow C^{0,1-d/p}(\overline{D})$ for $p>d$ \citep[see][Sections 5.6.1--5.6.2]{Evans2022}. These embeddings connect the Sobolev framework with the usual $L^{p}$- and continuous-function settings of functional data analysis. Furthermore, these inclusions are strict and show that the framework cannot be reduced to the classical Hilbertian scenario. For instance, if $p<\min(2, 2\,d/(d+2))$, $W^{1,p}(D)$ does not embed into $L^{2}(D)$ or $W^{1,2}(D)$; while even in the Morrey regime the supremum-norm topology does not control derivative-dependent functionals. Thus, Sobolev-valued functional data provide a natural setting when both the topology of ambient space and the differential structure of data are statistically relevant. 
    \end{enumerate}
\end{remark}

%% file: sec_aux_results.tex
% !TeX spellcheck = en_US
%% ====================
\section{Auxiliary Results} \label{sec:aux_results}
%% ====================

\subsection{General Banach-Space and Weak-Convergence Tools} \label{sec:general_banach_results}

The proof of the main theorem uses several tools from probability theory and weak convergence in Banach spaces, collected here for convenience. Some of these facts are classical in spirit, but we include short proofs in the appendix to make the tensor-valued covariance calculations self-contained. The first result identifies the finite-dimensional linear projections that determine the law of a tight random element in $D([0,1],B)$, while the following propositions record elementary facts about tensor products, symmetrization and finite-rank approximation. 

Weak convergence in $D([0,1],B)$ will be verified through scalar projections $b(X(t))$, with $b\in B^*$. The following proposition states that, for tight random elements, these projected finite-dimensional distributions determine the full law, which generalizes Theorem 1.5.4 of \cite{vandervaart1996} to the Banach space $D([0, 1], B)$.

\begin{proposition} \label{prop:measure_equality}
	Let $X$ and $Y$ be tight random elements in $D([0, 1], B)$. Then $X$ and $Y$ have the same distribution if and only if 
	\begin{equation}\label{eq:prop_fdd}
		\big(b_1(X(t_1)), \dots, b_p(X(t_p)) \big) \stackrel{\Dc}{=} \big(b_1(Y(t_1)), \dots, b_p(Y(t_p)) \big),
	\end{equation}
	for any $b_1, \dots, b_p \in B^*, t_1, \dots, t_p \in [0, 1]$ and $p\in \N$.
\end{proposition}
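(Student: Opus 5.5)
The forward implication is immediate: if $X$ and $Y$ have the same law on $D([0,1],B)$, then the map
\[
x\mapsto \big(b_1(x(t_1)),\dots,b_p(x(t_p))\big)
\]
is Borel measurable from $D([0,1],B)$ (with the Skorokhod topology) to $\R^p$. Each evaluation $x\mapsto x(t)$ is measurable for the Borel $\sigma$-algebra of the Skorokhod space, since it is a pointwise limit of continuous averaging maps $x\mapsto \epsilon^{-1}\int_t^{t+\epsilon} x(s)\diff s$, using right-continuity. Composing with the continuous $b_j$ gives measurability, so the image laws coincide.

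For the converse I would follow the scheme of Theorem 1.5.4 of \cite{vandervaart1996}: show that the class of cylinder sets generated by the maps $x\mapsto b(x(t))$ is a $\pi$-system whose generated $\sigma$-algebra contains the Borel sets of a $\sigma$-compact set carrying the mass of $X$ and $Y$.

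\textbf{Step 1: reduce to a $\sigma$-compact set.} By tightness there are compacts $K_m\uparrow$ in $D([0,1],B)$ with $P(X\in K_m)\to1$ and $P(Y\in K_m)\to1$. Hence it suffices to show $P(X\in A)=P(Y\in A)$ for Borel $A\subset K=\bigcup_m K_m$.

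\textbf{Step 2: separate points of $K$ by countably many coordinates.} The family $\mathcal{F}=\{x\mapsto b(x(t)) : b\in B^*,\ t\in[0,1]\}$ separates points of $D([0,1],B)$. Indeed, if $x\neq y$ then $x(t)\neq y(t)$ for some $t$, and Hahn--Banach yields $b$ with $b(x(t))\neq b(y(t))$. Countability comes from two facts. First, $B$ separable implies there is a countable norming set $\{b_k\}\subset B_1^*$ with $\|z\|_B=\sup_k|b_k(z)|$. Second, right-continuity means points are determined on $\mathbb{Q}\cap[0,1]\cup\{1\}$. So the countable family $\mathcal{F}_0=\{x\mapsto b_k(x(q))\}$ already separates points.

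\textbf{Step 3: identify the $\sigma$-algebras on compacts.} The map $\Psi: D([0,1],B)\to\R^{\N\times\N}$, $x\mapsto (b_k(x(q)))_{k,q}$, is measurable and injective. On each compact $K_m$ I would then like to conclude that $\Psi|_{K_m}$ is a Borel isomorphism onto its image, so that $\sigma(\mathcal{F}_0)\cap K_m=\mathcal{B}(K_m)$. The cleanest route is Souslin's/Kuratowski's theorem: $K_m$ is a compact metrizable (hence Polish) space and $\Psi|_{K_m}$ is an injective Borel map into a Polish space, so it is a Borel isomorphism onto its image. This avoids needing continuity of coordinate projections in the Skorokhod topology, which fails at jump points.

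\textbf{Step 4: conclude with the $\pi$-$\lambda$ theorem.} The finite-dimensional equalities \eqref{eq:prop_fdd} say that the laws of $\Psi(X)$ and $\Psi(Y)$ agree on finite-dimensional cylinders, a $\pi$-system generating the product $\sigma$-algebra. Hence the two laws agree on $\sigma(\mathcal{F}_0)$. For Borel $A\subset K_m$, Step 3 gives $A=K_m\cap C$ with $C\in\sigma(\mathcal{F}_0)$, and $K_m$ itself is a Borel set of the form $\Psi^{-1}(\Psi(K_m))$, with $\Psi(K_m)$ Borel by Souslin. Therefore $P(X\in A)=P(Y\in A)$. Letting $m\to\infty$ and using Step 1 finishes the proof.

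The main obstacle is Step 3. Measurability of the evaluation maps on the Skorokhod space with values in a general separable Banach space, and the validity of the Souslin argument, depend on $D([0,1],B)$ being separable and metrizable, and on $K_m$ being Polish. I would cite the standard fact that Skorokhod's $J_1$ space over a separable complete metric space is Polish, which settles both points.
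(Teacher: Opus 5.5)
Your proof is correct, but it takes a different route from the paper. The paper applies Lemma 1.3.12 of \cite{vandervaart1996} directly. It takes the maps $x\mapsto g\big(b_1(x(t_1)),\dots,b_p(x(t_p))\big)$ with $g$ linear, asserts that they form a vector lattice containing the constants and separating points, and concludes that this class determines tight laws. That is shorter and needs no descriptive set theory. However, the lemma requires a lattice of bounded continuous functions, and this class is not one. The map $x\mapsto b(x(t))$ is unbounded and is not $J_1$-continuous at paths jumping at $t\in(0,1)$. Moreover, linear combinations of such maps are not closed under pointwise maxima and contain no non-zero constants. Salvaging that route would require continuous surrogates, e.g.\ bounded continuous functions of time averages $\eps^{-1}\int_t^{t+\eps} b(x(s))\diff s$ together with $b(x(1))$. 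Their laws are still determined by the projected finite-dimensional distributions through Riemann sums.

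Your argument avoids the continuity issue altogether. It needs only:
\begin{itemize}
\item Borel measurability of the evaluations;
\item a countable separating family (a norming sequence combined with rational times);
\item the Lusin--Souslin theorem on each compact $K_m$;
\item the $\pi$-$\lambda$ theorem on $\R^{\N\times\N}$.
\end{itemize}
Tightness serves only to give a $\sigma$-compact carrier. Even that becomes redundant once you use that $D([0,1],B)$ is Polish, since Lusin--Souslin then applies on the whole space. The cost is the appeal to a heavier measure-theoretic theorem, but your proof is the more robust of the two.

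Two minor points. First, the averaging maps are undefined at $t=1$; there you should use directly that $x\mapsto x(1)$ is $J_1$-continuous. Second, Step 3 only needs $D([0,1],B)$ to be metrizable, since compact subsets of a metrizable space are compact metrizable and hence Polish.
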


The next two elementary facts justify the covariance calculations in the projective tensor product. They are used repeatedly to separate independent terms and to bound tensor-valued covariances by ordinary second moments.  

\begin{proposition} \label{prop:independence}
    Let $X$ and $Y$ be random variables in $B$ with $\ex[\|X\|_B^2] < \infty$ and $\ex[\|Y\|_B^2] < \infty$. If $X$ and $Y$ are independent, then $\ex[X \otimes Y]= \ex[X] \otimes \ex[Y]$.
\end{proposition}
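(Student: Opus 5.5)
The plan is to reduce the tensor identity to scalar identities by testing against elementary functionals $b_1\otimes b_2$, and then to show that these functionals separate points of $B\hat{\otimes} B$.

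First, both sides are well defined. Since $\ex[\|X\|_B^2]$ and $\ex[\|Y\|_B^2]$ are finite, Cauchy--Schwarz gives $\ex[\|X\otimes Y\|_\pi]\le \ex[\|X\|_B\|Y\|_B]<\infty$, so $X\otimes Y$ is Bochner integrable in $B\hat{\otimes} B$. Likewise $\ex[X],\ex[Y]\in B$ exist, so $\ex[X]\otimes\ex[Y]\in B\otimes B$.

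Next, fix $b_1,b_2\in B^*$. By the identity recorded in the preliminaries, $(b_1\otimes b_2)\ex[X\otimes Y]=\ex[b_1(X)b_2(Y)]$. The scalar variables $b_1(X)$ and $b_2(Y)$ are independent and square integrable, so the expectation factorizes as $\ex[b_1(X)]\,\ex[b_2(Y)]$. Since bounded linear functionals commute with Bochner integrals, this equals $b_1(\ex[X])\,b_2(\ex[Y])=(b_1\otimes b_2)(\ex[X]\otimes\ex[Y])$. Hence $u:=\ex[X\otimes Y]-\ex[X]\otimes\ex[Y]\in B\hat{\otimes} B$ satisfies $(b_1\otimes b_2)u=0$ for all $b_1,b_2\in B^*$.

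The remaining and main step is to show that the functionals $b_1\otimes b_2$ separate points of $B\hat{\otimes} B$. This is not automatic for arbitrary Banach spaces, because injectivity of $B\hat{\otimes}B\to B\check{\otimes}B$ is the approximation property. Here it holds because, under Assumption \ref{assump:space}, $B$ has a Schauder basis and therefore the approximation property. So I would argue directly:
\begin{enumerate}
\item Write $u=\sum_j x_j\otimes y_j$ with $\sum_j\|x_j\|_B\|y_j\|_B<\infty$.
\item For $b\in B^*$, the operator $\Phi(u)b=\sum_j b(y_j)x_j$ satisfies $b'(\Phi(u)b)=(b'\otimes b)u=0$ for every $b'\in B^*$. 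By Hahn--Banach, $\Phi(u)=0$.
\item The $\Pi_k$ are uniformly bounded and $\Pi_k x\to x$ for every $x\in B$. By dominated convergence on the absolutely convergent series, $(\Pi_k\otimes\Pi_k)u\to u$ in $\|\cdot\|_\pi$.
\item The finite-rank tensor $(\Pi_k\otimes\Pi_k)u=\sum_{i,l\le k}(e_i^*\otimes e_l^*)(u)\, e_i\otimes e_l$ has all coefficients equal to zero.
\end{enumerate}
Therefore $u=0$, which gives $\ex[X\otimes Y]=\ex[X]\otimes\ex[Y]$.

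An alternative would avoid the basis altogether. One could approximate $X$ and $Y$ in $L^2$ by simple random variables measurable with respect to $\sigma(X)$ and $\sigma(Y)$, check the identity directly on these by independence, and pass to the limit using the continuity of $\otimes$ in $L^2$.
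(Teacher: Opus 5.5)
Your proof is correct, but your main route is not the paper's. The paper's proof is the alternative you sketch at the end: it approximates $X$ and $Y$ by simple random variables, verifies $\ex[X_n\otimes Y_n]=\ex[X_n]\otimes\ex[Y_n]$ by expanding the double sum and factorizing $\pr(A_i\cap B_j)$, and then passes to the limit.

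\textbf{What each route buys.} The simple-function argument needs nothing beyond separability of $B$. That matches the statement as given: Proposition \ref{prop:independence} does not invoke Assumption \ref{assump:space}, and neither do Proposition \ref{prop:cov_bound} and Lemma \ref{lem:lrv_lipschitz}, where it is used. Your main route instead rests on the functionals $b_1\otimes b_2$ separating points of $B\hat{\otimes}B$. That is not automatic, and you obtain it from the Schauder basis. On its own, your route therefore proves the proposition only under an extra hypothesis. That hypothesis is in force in Theorem \ref{thm:fclt}, but it is not part of this statement. In return you get a reusable uniqueness principle. It underlies passing from the scalar covariances computed in Lemma \ref{lem:fdd} to the tensor identity $\cov(G(t),G(s))=\int_0^{t\wedge s}\sigma^2(x)\diff x$ in Theorem \ref{thm:fclt}, a step the paper takes without comment.

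\textbf{Your separation argument.} Steps 3 and 4 are sound: uniform boundedness of $\Pi_k$ and absolute convergence of the series justify both the limit and the interchange of sums. Step 2 is superfluous, since step 4 uses $(e_i^*\otimes e_l^*)u=0$ directly.

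\textbf{Your alternative sketch.} It is more careful than the paper's write-up on two points the argument really needs. First, the approximants must be $\sigma(X)$- and $\sigma(Y)$-measurable for $\pr(A_i\cap B_j)=\pr(A_i)\pr(B_j)$ to hold. Second, they must converge in $L^2$, not merely pointwise, so that $\|\ex[X_n\otimes Y_n]-\ex[X\otimes Y]\|_\pi\le\ex[\|X_n-X\|_B\|Y_n\|_B]+\ex[\|X\|_B\|Y_n-Y\|_B]\to 0$.
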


\begin{proposition} \label{prop:bound_proj_norm}
    Let $X$ and $Y$ be random variables in $B$ with $\ex[\|X\|_B^2] < \infty$ and $\ex[\|Y\|_B^2] < \infty$. Then, $\|\ex[X\otimes Y]\|_\pi \le \ex[\|X\|_B \|Y \|_B]$.
\end{proposition}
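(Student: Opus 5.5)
The plan is to prove the bound directly from the definition of the projective norm, first for simple random variables and then by approximation. By Theorem 6.7 of \cite{janson2015}, $X\otimes Y$ is Bochner integrable in $B\hat{\otimes}B$. Indeed, $\|X\otimes Y\|_\pi\le \|X\|_B\|Y\|_B$ (the single-term representation) and $\ex[\|X\|_B\|Y\|_B]\le \ex[\|X\|_B^2]^{1/2}\ex[\|Y\|_B^2]^{1/2}<\infty$ by Cauchy--Schwarz. Measurability of $\omega\mapsto X(\omega)\otimes Y(\omega)$ follows because the bilinear map $(x,y)\mapsto x\otimes y$ is continuous from $B\times B$ into $B\hat{\otimes}B$, and $B$ is separable.

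The key step is the standard norm inequality for the Bochner integral: for any Bochner integrable $Z$ with values in a Banach space $E$, $\|\ex[Z]\|_E\le \ex[\|Z\|_E]$. I would apply this with $E=B\hat{\otimes}B$ and $Z=X\otimes Y$, which gives
\[
\|\ex[X\otimes Y]\|_\pi \le \ex\big[\|X\otimes Y\|_\pi\big]\le \ex\big[\|X\|_B\|Y\|_B\big].
\]
For completeness, I would justify the Bochner norm inequality itself through simple functions. If $Z=\sum_j z_j\id_{A_j}$ with disjoint $A_j$, then $\ex[Z]=\sum_j \pr(A_j)z_j$, and the triangle inequality gives $\|\ex Z\|\le\sum_j\pr(A_j)\|z_j\|=\ex\|Z\|$. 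For general $Z$, take simple $Z_m$ with $\ex\|Z_m-Z\|\to0$. Then $\ex Z_m\to\ex Z$ in $E$ and $\ex\|Z_m\|\to\ex\|Z\|$, so the inequality passes to the limit.

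The only point that needs care is the elementary estimate $\|x\otimes y\|_\pi\le\|x\|_B\|y\|_B$, which holds by taking the trivial one-term representation in the infimum defining $\|\cdot\|_\pi$. With that in hand, I do not expect any genuine obstacle: the statement reduces to the triangle inequality for Bochner integrals combined with this bound on elementary tensors.
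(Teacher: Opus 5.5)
Your proof is correct. It reaches the same inequality as the paper but justifies the key step differently.

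The paper argues by duality. It identifies $(B\hat{\otimes}B)^*$ with the bounded bilinear forms on $B\times B$ (Theorem 4.6 of \cite{janson2015}), and writes $\|\ex[X\otimes Y]\|_\pi$ as the supremum of $|\beta(\ex[X\otimes Y])|$ over such forms with $\|\beta\|\le 1$. It then moves $\beta$ inside the Bochner integral and bounds $|\beta(X,Y)|\le\|X\|_B\|Y\|_B$ pointwise.

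You instead apply the Bochner triangle inequality $\|\ex Z\|_E\le\ex\|Z\|_E$ directly in $E=B\hat{\otimes}B$. You combine it with the cross-norm estimate $\|x\otimes y\|_\pi\le\|x\|_B\|y\|_B$, which you read off from the one-term representation in the infimum.

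The paper's argument is in effect the Hahn--Banach proof of that same triangle inequality, specialised to the norming functionals of the projective tensor product. Your version needs only the trivial estimate on elementary tensors, not the identification of the dual space, so it is fully self-contained. Your simple-function argument and your measurability and integrability checks are sound.

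In both routes the only real requirement is $\ex[\|X\|_B\|Y\|_B]<\infty$, which makes $X\otimes Y$ Bochner integrable. The second-moment hypothesis supplies this via Cauchy--Schwarz.
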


The following maximal inequality is a symmetrized Banach space version of the Lévy–Ottaviani inequality \citep[see][Appendix A.1]{vandervaart1996}. It bounds the expected maximum of partial sums of independent centered Banach-valued random variables by the expected norm of a Rademacher sum. This form is convenient because the randomized sum can later be controlled using finite-dimensional or entropy arguments.  

\begin{proposition} \label{prop:ottaviani}
    Let $X_1, \dots, X_n$ be centered, independent random variables in $B$. Then,
    \begin{equation*}
        \ex\bigg[ \max_{i=1}^n \bigg\| \sum_{j=1}^i X_j \bigg\|_B \bigg]     
        \le 8 \ex\bigg[ \bigg\| \sum_{i=1}^n R_i X_i \bigg\|_B \bigg],     
    \end{equation*}
    for independent Rademacher random variables $R_1, \dots, R_n$.
\end{proposition}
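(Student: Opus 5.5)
The plan is the classical symmetrization route: reduce to symmetric summands, apply Lévy's reflection inequality to these, and then go back to Rademacher signs. Each step costs at most a factor $2$, so the resulting constant ($4$) is better than the stated $8$. We may assume $\ex[\|\sum_{i} R_i X_i\|_B]<\infty$ (otherwise there is nothing to prove). Since $\|X_i\|_B \le \ex_R\|\sum_j R_j X_j\|_B$ by conditional Jensen over the $R_j$, $j\neq i$, each $X_i$ is then Bochner integrable, so the centering and the conditional expectations below make sense. Separability of $B$ makes all maxima of norms measurable.

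\emph{Step 1 (symmetrization).} Let $(X_1',\dots,X_n')$ be an independent copy of $(X_1,\dots,X_n)$, and write $S_i=\sum_{j\le i}X_j$, $S_i'=\sum_{j\le i}X_j'$. Since the $X_j'$ are centered and independent of $\sigma(X_1,\dots,X_n)$, we have $S_i=\ex[S_i-S_i'\mid X_1,\dots,X_n]$ in the Bochner sense. Because $\|\cdot\|_B$ is convex and continuous, conditional Jensen gives $\|S_i\|_B\le \ex[\|S_i-S_i'\|_B\mid X]$ for every $i$. Taking the maximum inside the conditional expectation and then expectations yields
\[
\ex\Big[\max_{i\le n}\|S_i\|_B\Big]\le \ex\Big[\max_{i\le n}\|T_i\|_B\Big],\qquad T_i=\sum_{j\le i}Y_j,\quad Y_j=X_j-X_j'.
\]
The $Y_j$ are independent and symmetric.

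\emph{Step 2 (Lévy's inequality in $B$).} For symmetric independent $Y_j$ we show $\pr(\max_{i\le n}\|T_i\|_B>t)\le 2\pr(\|T_n\|_B>t)$ for all $t>0$. Let $\tau=\min\{i:\|T_i\|_B>t\}$. The identity $T_i=\tfrac12\big(T_n+(2T_i-T_n)\big)$ implies that on $\{\tau=i\}$ we have $\|T_n\|_B>t$ or $\|2T_i-T_n\|_B>t$. Now $2T_i-T_n=T_i-(T_n-T_i)$, and the vector $(Y_1,\dots,Y_i,-Y_{i+1},\dots,-Y_n)$ has the same law as $(Y_1,\dots,Y_n)$ by independence and symmetry. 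Also, $\{\tau=i\}$ depends only on $Y_1,\dots,Y_i$. Hence $\pr(\tau=i,\|2T_i-T_n\|_B>t)=\pr(\tau=i,\|T_n\|_B>t)$. Summing over $i$ gives the claim. Integrating in $t$ then yields $\ex[\max_i\|T_i\|_B]\le 2\ex[\|T_n\|_B]$. I expect this reflection step to be the only point needing care, namely the equality of joint laws used for the events $\{\tau=i\}$; the rest is routine.

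\emph{Step 3 (back to Rademacher sums).} By symmetry, $(Y_j)_j$ has the same law as $(R_jY_j)_j$ for independent Rademacher $R_j$ independent of everything else. Therefore
\[
\ex\|T_n\|_B=\ex\Big\|\sum_j R_j(X_j-X_j')\Big\|_B\le 2\,\ex\Big\|\sum_j R_jX_j\Big\|_B
\]
by the triangle inequality and equality in law of the two halves. Combining Steps 1–3 gives the bound with constant $4\le 8$, which proves Proposition \ref{prop:ottaviani}.
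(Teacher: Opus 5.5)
Your proof is correct and follows the paper's route: symmetrization with an independent copy via conditional Jensen, then Lévy's reflection inequality for the symmetric differences. You prove Lévy's inequality directly, where the paper cites Proposition 2.3 of Ledoux and Talagrand. The only difference is the final step, and it is why you get $4$ instead of $8$. The paper desymmetrizes by the triangle inequality to $4\,\ex\|\sum_{i=1}^n X_i\|_B$ and then applies the symmetrization lemma (Lemma 2.3.1 of van der Vaart and Wellner), paying another factor $2$. You instead insert the Rademacher signs directly into $\sum_j Y_j$ and use the triangle inequality once.
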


Finally, we record a deterministic approximation lemma for Lipschitz paths in the projective tensor product. It allows tensor-valued covariance functions to be approximated uniformly by Lipschitz finite-rank tensor paths, which is useful when passing between finite-dimensional projections and the full Banach-space covariance structure.  

\begin{proposition} \label{prop:lipschitz_finite_rank}
    Let $x: [0, 1] \to B \hat{\otimes} B$ be a Lipschitz continuous map and $\eps > 0$. Then, it exists a Lipschitz continuous map $\tilde{x}: [0, 1] \to B\otimes B$, such that $\tilde{x}(t)$ is a finite rank operator, for each $t\in[0, 1]$ and $\sup_{t\in[0, 1]} \| x(t) - \tilde{x}(t)\|_\pi \le \eps$.
\end{proposition}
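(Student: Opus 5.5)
The plan is to build $\tilde x$ by piecewise linear interpolation of finite-rank approximations taken at finitely many grid points. Two facts make this work. First, $B\otimes B$ is dense in $B\hat{\otimes}B$ by the definition of the completion, so every element of $B\hat{\otimes}B$ can be approximated in $\|\cdot\|_\pi$ by a finite sum $\sum_{j=1}^m x_{1j}\otimes x_{2j}$. Second, a convex combination of two such finite sums is again a finite sum, and so it is again a finite-rank tensor.

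\emph{Step 1 (grid).} Let $L$ be the Lipschitz constant of $x$ with respect to $\|\cdot\|_\pi$. Choose $N\in\N$ with $L/N\le \eps/2$ and set $t_j=j/N$ for $j=0,\dots,N$.

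\emph{Step 2 (nodes).} For each $j$, use density to pick $u_j\in B\otimes B$ with $\|x(t_j)-u_j\|_\pi\le \eps/2$.

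\emph{Step 3 (interpolation).} For $t\in[t_j,t_{j+1}]$ write $t=\lambda t_j+(1-\lambda)t_{j+1}$ with $\lambda\in[0,1]$, and define
\[
\tilde x(t)=\lambda u_j+(1-\lambda)u_{j+1}.
\]
This is well defined at the nodes and continuous. On each of the finitely many subintervals it is affine with slope $N(u_{j+1}-u_j)$, so $\tilde x$ is Lipschitz with constant $\max_j N\|u_{j+1}-u_j\|_\pi<\infty$. Each value $\tilde x(t)$ lies in the algebraic tensor product $B\otimes B$, since it is a linear combination of the finitely many elementary tensors appearing in $u_j$ and $u_{j+1}$. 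Hence $\tilde x(t)$ is a finite-rank element (operator) for every $t$.

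\emph{Step 4 (uniform error).} For $t\in[t_j,t_{j+1}]$, decompose
\[
\tilde x(t)-x(t)=\lambda\big(u_j-x(t_j)\big)+(1-\lambda)\big(u_{j+1}-x(t_{j+1})\big)+\lambda\big(x(t_j)-x(t)\big)+(1-\lambda)\big(x(t_{j+1})-x(t)\big).
\]
The first two terms have combined norm at most $\eps/2$. Each of the last two terms involves points at distance at most $1/N$, so their combined norm is at most $L/N\le\eps/2$. Therefore $\sup_{t}\|x(t)-\tilde x(t)\|_\pi\le\eps$.

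There is no real obstacle in this argument. The only point needing care is to check that the interpolated values stay in the algebraic (finite-rank) part $B\otimes B$ rather than only in its completion, and that is exactly what linearity of the interpolation guarantees. Only the uniform continuity of $x$ is used in Step 4; the Lipschitz hypothesis on $x$ just makes the choice of $N$ explicit.
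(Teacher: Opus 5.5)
Your proposal is correct and follows essentially the same route as the paper: approximate $x(t_k)$ at the nodes of a partition with mesh at most $\eps/(2L)$ by algebraic tensors within $\eps/2$, interpolate linearly, and obtain the error bound $\eps/2 + L\Delta \le \eps$, with a Lipschitz constant given by the node increments divided by the mesh widths. Your weights, $\tilde x(t)=\lambda u_j+(1-\lambda)u_{j+1}$ for $t=\lambda t_j+(1-\lambda)t_{j+1}$, are the correct ones; the interpolation formula displayed in the paper interchanges the two weights, which as written makes $\tilde x$ ill-defined at the interior nodes, and your version avoids that slip.
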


\subsection{A Banach Space-Valued Maximal Inequality} \label{sec:max_inequality}

The tightness proof requires uniform control of the empirical process obtained by applying linear functionals to the high-dimensional tail $T_kX_{i,n}$. Since the Banach space is not assumed to be of type 2, this control is not derived from the geometry of $B$, but from a dependence-adjusted entropy bound for real-valued function classes on $B\times[0,1]$. The following maximal inequality is an adaptation of the empirical process inequality of \cite{phandoidaen2022}. It will be applied with $f(x,u)=b(T_kx)\id(u\le t)$, so that the supremum over $f$ corresponds to the norm of the projected tail of the partial sum process.  
All suprema over function classes are assumed to be measurable; alternatively, all expectations may be read as outer expectations.

Let $\Gc$ denote a set of measurable functions $f: B \times [0, 1] \to\R$ and define 
\begin{equation} \label{eq:def_emp_process}
    \Gb_n(f) = \frac{1}{\sqrt{n}} \sum_{i=1}^n f(X_{i, n}, i/n) - \ex[f(X_{i, n}, i/n)],
\end{equation}
for $f\in \Gc$. For $f\in \Gc$, let $W_i(f) := f(X_{i,n}, i/n)$. Further, recall that $\Fc_i = (\eta_k)_{k\le i}$ and $X_{i, n}= H(i/n, \Fc_i)$. Denote by $\Fc_i^{(k)}$ the filtration $\Fc_i$, where $\eta_{i-k}$ is replaced by an i.i.d. copy, and let $X_{i, n}^{(i-k)} = H(i/n, \Fc_i^{(k)})$ and $W_i^{(k)}(f) = f(X_{i,n}^{(i-k)}, i/n)$.

\begin{assumption} \label{assump:functions}
    There exist a nonincreasing sequence $\Delta(k)$ with $\sum_{k\in\N} \Delta(k) < \infty$ and a constant $D > 0$, such that 
    \begin{equation*}
        \sup_{1\le i\le n} \ex[\sup_{f\in\Gc} | W_i(f) - W_i^{(k)}(f)|^2]^{1/2} \le D \Delta(k),
    \end{equation*}
    for all $k\in \N$, and
    \begin{equation} \label{eq:def_seminorm0}
        \sup_{f\in\Gc} \| f \|_{2, n} \le D, \quad \| f \|_{2, n} := \bigg(\frac{1}{n} \sum_{i=1}^n \ex[|W_i(f)|^2] \bigg)^{1/2}.
    \end{equation}
\end{assumption}

Under Assumption \ref{assump:local_stationarity}, we may choose $\Delta(k) = \delta_2(H, k)$ whenever $\sup_{f\in\Gc, u\in[0, 1]} |f(x, u) - f(y, u)| \le D \|x-y\|_B$. This is the case, for example, whenever 
\begin{equation} \label{eq:func_class}
    \Gc \subset \{f: B\times[0, 1] \to \R | f(x, u) = b(x) \id(u \le t), b\in B_1^*, t\in[0, 1]\}.
\end{equation}

Analogously to (2.4) of \cite{phandoidaen2022}, define the dependence-adapted seminorm
\begin{equation} \label{eq:def_seminorm}
    V_n(f) = \| f\|_{2, n} + \sum_{k=1}^\infty \min\{\|f\|_{2, n}, D \Delta(k)\}.
\end{equation}

The following theorem adapts the maximal inequality of \cite{phandoidaen2022} to the present setting of Banach-space valued observations. The empirical process remains real-valued, but the indexing functions are now defined on the Banach-space domain $B\times[0,1]$. The quantity $m(n, \delta, k)$, defined in Section \ref{sec:fclt}, balances the finite-class truncation with the ``rare-event'' remainder in the theorem's proof.

\begin{theorem} \label{thm:max_inequality}
    Let $\{(X_{i, n})_{1\le i\le n}\}_{n\in\N}$ be a locally stationary time series satisfying Assumption \ref{assump:local_stationarity}. Moreover, let $\Gc$ be a class of measurable functions satisfying Assumption \ref{assump:functions}. Let $F:B\times [0, 1]\to [0, \infty)$ be an envelope of $\Gc$, i.\,e., $|f(x, t)|\le |F(x,t)|$ for all $x\in B, t \in [0, 1]$ and $f\in \Gc$. 

    If $\sup_{f\in\Gc} V_n(f) \le \sigma$, for some constant $\sigma > 0$, then 
    \begin{equation*}
        \ex[\sup_{f\in\Gc}|\Gb_n(f)|] \le c\bigg[\int_0^\sigma \sqrt{1 \vee \log(N(\eps, \Gc, V_n))} \diff \eps + \sqrt{n}\Big\|F \id\big(F > \tfrac{1}{4}m[n, \sigma, N(\sigma/2, \Gc, V_n)]\big)\Big\|_{1, n}\bigg],
    \end{equation*}
    where $\|\cdot\|_{1, n}$ is defined analogously to $\|\cdot\|_{2, n}$ in \eqref{eq:def_seminorm0}
\end{theorem}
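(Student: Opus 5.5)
The plan is to follow the chaining-with-coupling strategy of \cite{phandoidaen2022}, Theorem 2.3, transferring it from $\R^d$-valued to $B$-valued observations. Since the functions $f\in\Gc$ are real-valued, the only properties of the observations that enter are the ones in Assumption \ref{assump:functions}. These are the coupling bound $\ex[\sup_f|W_i(f)-W_i^{(k)}(f)|^2]^{1/2}\le D\Delta(k)$ and the bound on $\|\cdot\|_{2,n}$, and neither uses the geometry of $B$.

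The first step is an $m$-dependent approximation. For $q\in\N$, replace $W_i(f)$ by $W_{i,q}(f)=\ex[W_i(f)\mid \eta_i,\dots,\eta_{i-q+1}]$. By the projection (martingale-difference) decomposition $W_i-W_{i,q}=\sum_{k\ge q}(\ex[W_i|\Fc_{i}\cap\sigma(\eta_{i-k},\dots)]-\dots)$ together with the coupling bound, this gives a Burkholder-type estimate of the form
\[
\ex\Big[\sup_{f}\Big|\tfrac{1}{\sqrt n}\textstyle\sum_i (W_i-W_{i,q})(f)-\ex[\cdot]\Big|\Big]\lesssim D\beta(q)\sqrt{\cdot}.
\]
The dependence-adjusted seminorm $V_n$ is designed so that this remainder is of the same order as the $\eps$-scale at level $q^*(\cdot)$. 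The process $W_{i,q}$ is $q$-dependent, so I split indices into alternating blocks of length $q$. This yields two sums of independent block sums, and Bernstein's inequality applies to each of them.

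The second step is chaining with brackets. I take dyadic levels $\eps_j=\sigma2^{-j}$ and bracketing partitions of $\Gc$ with $N(\eps_j,\Gc,V_n)$ cells, nested as usual, and at level $j$ use block length $q_j=q^*(r(\eps_j/D))$. For links between consecutive levels, Bernstein on block sums combined with a finite-maximum inequality gives a contribution $\eps_j\sqrt{\log N(\eps_{j},\Gc,V_n)}$. This holds as long as the increments are bounded by the truncation level $m(n,\eps_j,N_j)$, and the choice of $r(\cdot)$ in the definition of $m$ balances the Bernstein variance and range terms exactly. Increments exceeding the truncation level are handled by the adaptive truncation of \cite{vandervaart1996}, Section 2.14, stopping the chain at the first level at which the bracket width exceeds the threshold. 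The bracket widths are controlled in $V_n$ because $|f-\pi_jf|\le u-\ell$ and $V_n$ is monotone. Summing the levels gives the entropy integral.

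The third step handles the top level. The truncated envelope part $F\id(F>\tfrac14 m[n,\sigma,N(\sigma/2)])$ is bounded crudely by $2\sqrt n\|F\id(\cdot)\|_{1,n}$ through the triangle inequality and centering, which gives the second term. The remaining bounded part at the coarsest level is a maximum over $N(\sigma/2)$ functions, bounded by $\sigma\sqrt{\log N(\sigma/2)}\lesssim\int_0^\sigma\sqrt{1\vee\log N}$.

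The main obstacle is the first step uniformly over the class. One needs the $q$-dependent approximation error of the \emph{supremum} over bracket differences to be controlled by $\min\{\|f\|_{2,n},D\Delta(k)\}$ summed over $k$, that is, by $V_n$ rather than by $D\beta(q)$ alone. I would first try to reproduce Lemma 7.1 of \cite{phandoidaen2022} verbatim, with the sup-coupling bound of Assumption \ref{assump:functions} substituted for their Lipschitz-type assumption, checking that non-stationarity in $i$ only enters through the averaged norm $\|\cdot\|_{2,n}$.
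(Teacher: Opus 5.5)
Your plan is essentially the paper's proof. Both reduce the claim to the argument of \cite{phandoidaen2022}: truncation at $\tfrac14 m[n,\sigma,N(\sigma/2,\Gc,V_n)]$, nested bracketing chaining with level-wise truncation, and finite-class maxima via a $q$-dependent/martingale approximation with blocking. Both rest on the same observation, that only the real-valued $W_i(f)$, $W_i^{(k)}(f)$ and the bounds of Assumption \ref{assump:functions} enter, so the geometry of $B$ plays no role.

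The remaining differences are bookkeeping. The paper clips $\phi_{\wedge M_n}\circ f$ before chaining rather than at the top level. The uniform $q$-dependent approximation you flag as the main obstacle is only needed on the finite classes at each chaining level, through the finite-class inequality and compatibility lemma of \cite{phandoidaen2022}, and not through a Burkholder bound for the supremum over all of $\Gc$ as your first step suggests.
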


Compared to Theorem 4.4 of \cite{phandoidaen2022}, the domain changed from $(\R^d)^{\N_0}\times[0, 1]$ to $B\times [0, 1]$. Besides the changed domain, the theorem is a special case of Theorem 4.4 since the functions $f\in\Gc$ are allowed to depend on $n$ in the latter. 

We will apply the theorem to families of functions as in \eqref{eq:func_class}. In this case, we may choose the envelope $F(x, t) = \|x\|_B$. 

\subsection{Autocovariances and the Long-Run Variance} \label{sec:autocov_lrv}

This subsection collects covariance bounds implied by the physical dependence assumptions. The results have two roles in the proof of the FCLT. First, they allow $m$-dependent approximations in the big-blocks-small-blocks argument, and second, they show that the local long-run covariance $\sigma^2(t)$ is a well-defined continuous path in $B\hat{\otimes} B$.  

We first approximate the error process by conditioning on a finite block of recent innovations. The physical dependence coefficients control the error of this approximation uniformly over the triangular array and after applying arbitrary bounded linear functionals.  

\begin{proposition}  \label{prop:sn_dependence}
	Let Assumption \ref{assump:local_stationarity} be satisfied, $\eps_{i, n} := X_{i,n} - \ex[X_{i, n}]$, $\tilde{\eps}_{i, n} = \ex[\eps_{i, n} | \eta_i, \dots, \eta_{i-m}]$ and $b \in B^*$ with norm $\|b\|_{B^*}$. Then,
	\begin{equation*}  
		\sup_{1\le i \le n, n \in\N} \ex\big[ \big(b(\eps_{i, n}) - b(\tilde{\eps}_{i, n})\big)^2\big]^{1/2} \le \|b\|_{B^*} \Theta_{m}
	\end{equation*}
    and
    \begin{equation*}
        \sup_{1\le i \le n, n \in\N} \ex[ \|\eps_{i, n} - \tilde{\eps}_{i, n}\|_B^2]^{1/2} \le \Theta_{m}.
    \end{equation*}
    
\end{proposition}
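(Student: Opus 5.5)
We plan to use the standard projection/martingale-difference decomposition from the physical dependence framework, carried out directly in $B$. Fix $i$ and $n$ and write $\Fc_{j}^{i} = \sigma(\eta_i, \dots, \eta_{j})$ for $j \le i$. Set $P_{j}(\cdot) = \ex[\cdot \mid \eta_i,\dots,\eta_{j}] - \ex[\cdot \mid \eta_i,\dots,\eta_{j+1}]$, the Banach-valued conditional expectations being well defined since $\ex\|\eps_{i,n}\|_B^2<\infty$. Because $\eps_{i,n}$ is centered and $\sigma(\Fc_i)$-measurable, the martingale convergence theorem for Bochner conditional expectations gives $\ex[\eps_{i,n}\mid \eta_i,\dots,\eta_{j}] \to \eps_{i,n}$ in $L^2(B)$ as $j\to-\infty$. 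Hence
\begin{equation*}
\eps_{i,n} - \tilde{\eps}_{i,n} = \sum_{j=-\infty}^{i-m-1} P_j(\eps_{i,n}),
\end{equation*}
with convergence in $L^2(B)$. Applying $\|\cdot\|_B$ and the triangle inequality in $L^2(B)$ yields
\begin{equation*}
\ex[\|\eps_{i,n}-\tilde\eps_{i,n}\|_B^2]^{1/2} \le \sum_{j\le i-m-1} \ex[\|P_j(\eps_{i,n})\|_B^2]^{1/2}.
\end{equation*}
This avoids any type-2 geometry: we only need summability, not orthogonality of the increments.

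The key step is the coupling bound $\ex[\|P_j(\eps_{i,n})\|_B^2]^{1/2} \le \delta_2(H, i-j)$. Let $\eps^{*}$ denote $\eps_{i,n}$ with $\eta_j$ replaced by an independent copy $\eta_j^*$; the mean $\mu(i/n)$ cancels in the difference. Then $\ex[\eps_{i,n}\mid \eta_i,\dots,\eta_{j+1}] = \ex[\eps^{*}\mid \eta_i,\dots,\eta_{j}]$, since integrating out $\eta_j$ is the same as integrating out $\eta_j^*$. Consequently $P_j(\eps_{i,n}) = \ex[\eps_{i,n}-\eps^{*}\mid \eta_i,\dots,\eta_j]$. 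Conditional Jensen for the convex norm (contractivity of Bochner conditional expectation in $L^2(B)$) then bounds the $L^2$-norm by $\ex\|H(i/n,\Fc_i)-H(i/n,\Fc_i^{(i-j)})\|_B^2{}^{1/2}$. By stationarity of the innovations this equals at most $\delta_2(H,i-j)\le \delta_{2+\kappa}(H,i-j)$, using Lyapunov's inequality. Summing over $j \le i-m-1$ gives $\sum_{k\ge m+1}\delta_{2+\kappa}(H,k) \le \Theta_m$, uniformly in $i$ and $n$. This proves the second claim.

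The first claim then follows immediately, since $|b(\eps_{i,n})-b(\tilde\eps_{i,n})| \le \|b\|_{B^*}\|\eps_{i,n}-\tilde\eps_{i,n}\|_B$. Alternatively, one may use the fact that $b$ commutes with Bochner conditional expectation, so that $b(\tilde\eps_{i,n}) = \ex[b(\eps_{i,n})\mid\eta_i,\dots,\eta_{i-m}]$.

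We expect the main obstacle to be the technical justification of the Banach-valued steps rather than the estimates themselves. Three points need care:
\begin{enumerate}
\item the $L^2(B)$ martingale convergence, which holds for separable $B$ via approximation by simple functions, or via the reverse/forward martingale theorem of Pisier type, valid in any Banach space for $L^p$-closed martingales;
\item the identification $\ex[\eps_{i,n}\mid\eta_i,\dots,\eta_{j+1}] = \ex[\eps^*\mid\eta_i,\dots,\eta_j]$, via Fubini for Bochner integrals;
\item the measurability of $H$ in its infinite argument, which we take from the standing assumptions.
\end{enumerate}
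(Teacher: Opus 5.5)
Your proof is correct and follows the paper's argument: the same telescoping decomposition into conditional-expectation increments, the same coupling step (replace a single innovation by an independent copy, then apply conditional Jensen), and summation of $\delta_2(H,k)\le\delta_{2+\kappa}(H,k)$ over $k\ge m+1$, giving $\Theta_{m+1}\le\Theta_m$. The differences are cosmetic. You derive the scalar bound from the Banach-norm bound via $|b(x)|\le\|b\|_{B^*}\|x\|_B$, whereas the paper repeats the telescoping argument for $b(\eps_{i,n})$. You also explicitly justify the $L^2(B)$ convergence of the telescoping series, which the paper leaves implicit.
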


The next result identifies the limiting covariance contribution of a long block of observations from a stationary approximation. It is the covariance calculation used in the big-blocks part of the proof of finite-dimensional convergence and generalizes Proposition 10 of \cite{heinrichs2025} to Banach spaces.

\begin{proposition}  \label{prop:lrv_approximation}
	Let Assumption \ref{assump:local_stationarity} be satisfied and $b_1, b_2 \in B^*$. Then,
	\begin{align*}
	    & \sup_{t\in [0, 1]} \bigg|\frac{1}{\ell_n}\sum_{j_1, j_2=1}^{\ell_n} \ex\big[ b_1\big(H(t, \Fc_{0})\big) b_2\big(H(t, \Fc_{j_2-j_1})\big)\big] - (b_1 \otimes b_2) \sigma^2(t)\bigg| \\
        & = \sup_{t\in[0, 1]} \ex\big[\big|b_1\big(H(t, \Fc_{0})\big)\big|^2\big]^{1/2} \sup_{t\in[0, 1]} \ex\big[\big|b_2\big(H(t, \Fc_{0})\big)\big|^2\big]^{1/2} \cdot o(1) = o(1),
	\end{align*}
    as $n\to\infty$.
\end{proposition}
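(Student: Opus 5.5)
The plan is to reduce the claim to scalar autocovariances and to use summability of these autocovariances, which follows from the physical dependence bounds. Write $Y_j = H(t,\Fc_j)$, which is stationary in $j$ for fixed $t$, and set $\gamma_b(t,h) = \ex[b_1(Y_0) b_2(Y_h)]$. Stationarity gives $\ex[b_1(H(t,\Fc_0)) b_2(H(t,\Fc_{j_2-j_1}))] = \gamma_b(t, j_2-j_1)$. Grouping the double sum by lag $h=j_2-j_1$ then yields
\begin{equation*}
\frac{1}{\ell_n}\sum_{j_1,j_2=1}^{\ell_n}\gamma_b(t,j_2-j_1) = \sum_{|h|<\ell_n}\Big(1-\frac{|h|}{\ell_n}\Big)\gamma_b(t,h).
\end{equation*}
Applying the identity $(b_1\otimes b_2)\ex[X\otimes Y]=\ex[b_1(X)b_2(Y)]$ together with continuity of $b_1\otimes b_2$ on $B\hat\otimes B$, we get $(b_1\otimes b_2)\sigma^2(t)=\sum_{h\in\Z}\gamma_b(t,h)$. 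This uses the $\pi$-norm convergence of the series defining $\sigma^2(t)$, which follows from the bounds in the next step together with Proposition \ref{prop:bound_proj_norm}; one should also check the sign/index convention, since $\cov(H(t,\Fc_i),H(t,\Fc_0))$ corresponds to $\gamma_b(t,-i)$, and the reindexed sum is the same.

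The key estimate is a uniform bound $|\gamma_b(t,h)| \le \|b_2\|\,\ex[|b_1(Y_0)|^2]^{1/2}\,\Theta_{|h|}$, or the symmetric version with the roles of $b_1$ and $b_2$ exchanged. I would obtain it by projection. For $h>0$, let $\tilde Y_h = \ex[Y_h \mid \eta_h,\dots,\eta_1]$. This variable is independent of $Y_0$, so $\ex[b_1(Y_0) b_2(\tilde Y_h)] = \ex[b_1(Y_0)]\,\ex[b_2(\tilde Y_h)] = 0$ because the process is centered. Cauchy--Schwarz combined with the second bound of Proposition \ref{prop:sn_dependence} (with $m=h-1$, or more precisely through the standard projection decomposition $Y_h-\tilde Y_h=\sum_{k\ge h}\Pc_{h-k}Y_h$ with $\|\Pc_{h-k}Y_h\|_2\le\delta_2(H,k)$) then gives $|\gamma_b(t,h)|\le \|b_1\|\,\|b_2\|\,\sup_t\ex[\|Y_0\|^2]^{1/2}\Theta_{h}$. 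To get the factor structure displayed in the statement, I would instead write $b_2(Y_h)-b_2(\tilde Y_h)$ and note that the martingale-difference projections of the scalar process $b_2(Y_\cdot)$ are bounded by $\|b_2\|\delta_2(H,k)$. This yields the bound $\ex[|b_1(Y_0)|^2]^{1/2}$ times a quantity of order $\Theta_h$. A cleaner route to the symmetric product form is the projection identity $\gamma_b(t,h)=\sum_{k}\ex[\Pc_{-k}b_1(Y_0)\,\Pc_{-k}b_2(Y_h)]$ followed by Cauchy--Schwarz in $\ell^2$. Either way, the bound is uniform in $t$ and summable in $h$ by Assumption \ref{assump:local_stationarity}(2).

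With this bound the conclusion is a Cesàro argument. The difference equals
\begin{equation*}
-\sum_{|h|\ge \ell_n}\gamma_b(t,h) - \sum_{|h|<\ell_n}\frac{|h|}{\ell_n}\gamma_b(t,h).
\end{equation*}
The first term is bounded by $C\sum_{|h|\ge\ell_n}\Theta_{|h|}\to0$. For the second term, fix $M$ and split the lags: lags with $|h|\le M$ contribute at most $C M\sum_h\Theta_{|h|}/\ell_n$, and lags with $|h|>M$ contribute at most $C\sum_{|h|>M}\Theta_{|h|}$. Letting $n\to\infty$ and then $M\to\infty$ makes both vanish. The constant $C$ carries the factors $\sup_t\ex[|b_i(H(t,\Fc_0))|^2]^{1/2}$, or $\|b_i\|$ times the uniformly bounded second moments from Assumption \ref{assump:local_stationarity}(1), which gives the stated product form times $o(1)$, with the $o(1)$ uniform in $t$. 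The main obstacle is obtaining the lag bound in exactly the product form with both scalar second moments rather than operator norms. If this proves delicate, I would fall back on the $\|b_1\|\|b_2\|$ version, which already suffices for the final $o(1)$.
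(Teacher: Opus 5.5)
Your proposal is correct and follows essentially the paper's route: regroup the double sum by lags, identify $(b_1\otimes b_2)\sigma^2(t)$ with the series of scalar autocovariances, bound each autocovariance by a constant times $\Theta_{|h|}$ via an independent approximation and Cauchy--Schwarz, and conclude by dominated convergence (equivalently, your splitting at $M$). The differences are minor: the paper gets independence by replacing all innovations $\eta_k$, $k\le 0$, with independent copies instead of conditioning on $\eta_h,\dots,\eta_1$. It obtains the product form exactly as in your fallback: the left-hand side vanishes by Cauchy--Schwarz if either scalar second moment is zero, and otherwise the positive product is absorbed into the $o(1)$. So the symmetric lag bound you flagged as the main obstacle is not needed.
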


To ensure that the long-run covariance is well-defined as a tensor, we also need summability of the tensor-valued autocovariances. The following proposition gives the required projective norm bound and its scalar counterpart for linear projections.  

\begin{proposition} \label{prop:cov_bound}
    Let Assumption \ref{assump:local_stationarity} be satisfied. Then
    \[ 
        \sup_{t\in[0, 1]}\| \cov\big(H(t, \Fc_i), H(t, \Fc_0)\big) \|_\pi \le \Theta_i \sup_{t\in[0, 1]} \ex[\| H(t, \Fc_0) \|_B^2]^{1/2}.
    \]
    Moreover, for any $b\in B^*$, it holds
    \[ 
        \sup_{t\in[0, 1]}\big| \ex\big[b\big(H(t, \Fc_i)\big) b\big(H(t, \Fc_0)\big)\big] \big| \le \|b\|_{B^*} \Theta_i \sup_{t\in[0, 1]}\ex\big[\big|b\big(H(t, \Fc_0)\big)\big|^2\big]^{1/2}. 
    \]
\end{proposition}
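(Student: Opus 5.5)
Our plan is to use the projective-dependence (martingale-difference) decomposition. Fix $t$ and write $Y_j = H(t,\Fc_j)$. We may assume $\ex[Y_0]=0$ by centering; the centered map inherits the same physical dependence bounds because the centering constant cancels in $Y_j - Y_j^*$. Define the projection operators $P_k(\cdot) = \ex[\cdot\,|\,\Fc_k] - \ex[\cdot\,|\,\Fc_{k-1}]$, with conditional expectations taken in the Bochner sense. Then
\[
Y_i = \sum_{k=-\infty}^{i} P_k Y_i, \qquad Y_0 = \sum_{k=-\infty}^{0} P_k Y_0,
\]
with convergence in $L^2(B)$. This convergence follows from the bound $\ex[\|P_k Y_i\|_B^2]^{1/2} \le \delta_2(H, i-k)$, which we obtain from the standard coupling argument. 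Replacing $\eta_k$ by the copy $\eta_k^*$ does not change $\ex[Y_i\,|\,\Fc_{k-1}]$, so $P_k Y_i = \ex[Y_i - Y_i^{(k)}\,|\,\Fc_k]$. Conditional Jensen for the norm then gives the bound, and stationarity in the second argument gives $\delta_2(H,i-k) \le \delta_{2+\kappa}(H,i-k)$. We also need that $\ex[Y_i\,|\,\Fc_{-m}] \to 0$ in $L^2(B)$. This follows from $\|\ex[Y_i\,|\,\Fc_{-m}]\|_{L^2} \le \sum_{j\ge i+m}\delta_2(H,j) \to 0$, obtained by a telescoping coupling in which the innovations before $-m$ are replaced one at a time.

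The key step is orthogonality of the martingale differences at the tensor level. For $k\neq k'$ we want $\ex[P_k Y_i \otimes P_{k'} Y_0] = 0$. Take $k<k'$. Then $P_kY_i$ is $\Fc_{k'-1}$-measurable, and $\ex[P_{k'}Y_0\,|\,\Fc_{k'-1}]=0$. Testing against $b_1\otimes b_2$ gives $\ex[b_1(P_kY_i)\,b_2(P_{k'}Y_0)]=0$. Since the functionals $b_1\otimes b_2$ separate points of $B\hat{\otimes}B$ (this holds under the approximation property, which is guaranteed by the Schauder basis in Assumption \ref{assump:space}), the tensor itself vanishes. Consequently
\[
\cov(Y_i,Y_0) = \sum_{k\le 0} \ex[P_kY_i \otimes P_kY_0].
\]
Applying Proposition \ref{prop:bound_proj_norm} and Cauchy--Schwarz to each term gives
\[
\|\cov(Y_i,Y_0)\|_\pi \le \sum_{k\le 0} \delta_2(H,i-k)\,\ex[\|P_kY_0\|_B^2]^{1/2}.
\]

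To conclude, bound $\ex[\|P_kY_0\|^2]^{1/2} \le \ex[\|Y_0\|^2]^{1/2}$, up to a factor of 2 that we remove by using $\|P_kY_0\|_{L^2}^2 \le \|\ex[Y_0|\Fc_k]\|_{L^2}^2$ and orthogonality. A cleaner route is to keep one side as $Y_0$ itself rather than decomposing it, writing
\[
\cov(Y_i,Y_0) = \sum_{k\le 0}\ex[P_kY_i\otimes Y_0].
\]
Each term then satisfies $\|\cdot\|_\pi \le \delta_2(H,i-k)\,\ex[\|Y_0\|^2]^{1/2}$ by Proposition \ref{prop:bound_proj_norm}. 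Summing over $k\le 0$ gives $\sum_{j\ge i}\delta_2(H,j) \le \Theta_i$. The interchange of the sum with the expectation and tensor is justified by $L^2$-convergence of the partial sums $\sum_{k=-m}^0 P_kY_i = \ex[Y_i|\Fc_0]-\ex[Y_i|\Fc_{-m-1}]$, together with $\ex[(Y_i - \ex[Y_i|\Fc_0])\otimes Y_0]=0$. This last identity holds because $Y_0$ is $\Fc_0$-measurable, and we verify it again via $b_1\otimes b_2$. Taking the supremum over $t$ yields the first claim.

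The scalar bound follows in the same way. Apply the argument to the real series $b(Y_j)$, using $|b(Y_i)-b(Y_i^{(k)})| \le \|b\|_{B^*}\|Y_i-Y_i^{(k)}\|$, and keep $\ex[|b(Y_0)|^2]^{1/2}$ as the second factor.

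We expect the main obstacle to be justifying the tensor-valued manipulations: that the tensor expectation commutes with $L^2(B)$-limits, and that vanishing of all $b_1\otimes b_2$-evaluations implies the tensor is zero. We handle the first point via Proposition \ref{prop:bound_proj_norm}, which gives continuity of the bilinear map $(X,Y)\mapsto \ex[X\otimes Y]$ on $L^2(B)\times L^2(B)$. If the separation argument is unwanted for the second point, we argue directly instead. $\ex[(Y_i-\ex[Y_i|\Fc_0])\otimes Y_0]$ is the limit of the corresponding expressions with $Y_0$ replaced by simple $\Fc_0$-measurable functions $\sum_j x_j\id_{C_j}$. For these, the tensor equals $\sum_j \ex[(Y_i - \ex[Y_i|\Fc_0])\id_{C_j}]\otimes x_j$, which is $0$ by the defining property of conditional expectation.
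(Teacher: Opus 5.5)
Your proof is correct, but it takes a different route from the paper. The paper couples the whole past at once. With $\Fc_{\infty,i}=(\dots,\eta_{-1}^*,\eta_0^*,\eta_1,\dots,\eta_i)$, the variable $H(t,\Fc_{\infty,i})$ is independent of $H(t,\Fc_0)$. Proposition~\ref{prop:independence} and centering then remove $\ex[H(t,\Fc_{\infty,i})\otimes H(t,\Fc_0)]$. The remainder is handled by Proposition~\ref{prop:bound_proj_norm}, Cauchy--Schwarz and the telescoping bound $\ex[\|H(t,\Fc_i)-H(t,\Fc_{\infty,i})\|_B^2]^{1/2}\le\Theta_i$, obtained by replacing $\eta_0,\eta_{-1},\dots$ one at a time.

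You instead project onto the past: $\cov(Y_i,Y_0)=\ex[\ex[Y_i\mid\Fc_0]\otimes Y_0]$ and $\ex[Y_i\mid\Fc_0]=\sum_{k\le 0}P_kY_i$ with $\|P_kY_i\|_{L^2(B)}\le\delta_2(H,i-k)$. The two routes are closely related. Since $\ex[H(t,\Fc_{\infty,i})\mid\Fc_0]=0$, the paper's coupling error dominates $\|\ex[Y_i\mid\Fc_0]\|_{L^2(B)}$, and both give the constant $\sum_{j\ge i}\delta_2(H,j)\le\Theta_i$. The paper's version needs only the product rule for independent tensors and no Banach-valued conditional expectations. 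Yours replaces independence by the identity $\ex[(Y_i-\ex[Y_i\mid\Fc_0])\otimes Y_0]=0$. Your simple-function proof of that identity needs no approximation property, so Assumption~\ref{assump:space} is indeed superfluous.

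Some small points:
\begin{itemize}
\item The interchange of the series with the tensor expectation is unnecessary. Apply Proposition~\ref{prop:bound_proj_norm} once to $\ex[\ex[Y_i\mid\Fc_0]\otimes Y_0]$, then use the triangle inequality in $L^2(B)$ on $\sum_{k\le 0}P_kY_i$.
\item The limit $\ex[Y_i\mid\Fc_{-m}]\to0$ is cleanest via reverse martingale convergence and Kolmogorov's $0$--$1$ law. A one-at-a-time coupling, like the paper's telescoping, tacitly uses that the partially replaced variables converge in $L^2(B)$ to the fully replaced one.
\item The inequality $\delta_2\le\delta_{2+\kappa}$ is Lyapunov's inequality, not stationarity. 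Stationarity is what identifies $\|Y_i-Y_i^{(k)}\|_{L^2(B)}$ with the lag-$(i-k)$ coefficient.
\item Drop the remark about removing the factor $2$ ``by orthogonality''. For non-Hilbert $B$, $L^2(B)$ has no Pythagorean identity, and $\|X-\ex[X\mid\mathcal{G}]\|_{L^2(B)}$ can exceed $\|X\|_{L^2(B)}$. Your final route does not use it.
\end{itemize}
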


The final result of this subsection combines these covariance bounds with Lipschitz continuity of the map $H$. It shows that the local long-run covariance is itself continuous in $B\hat{\otimes} B$ and admits a representation by summable rank-one tensor paths.  

\begin{lemma}  \label{lem:lrv_lipschitz}
	Let Assumption \ref{assump:local_stationarity} be satisfied, then sequences $(y_m)_{m\in\N}, (z_m)_{m\in\N}$ of Lipschitz continuous maps $y_m, z_m:[0, 1]\to B$ exist, such that 
    \begin{equation} \label{eq:lrv_l2_summability}
        \sup_{t\in[0, 1]} \sum_{m=1}^\infty \|y_m(t)\|_B^2 < \infty \quad \text{and} \quad \sup_{t\in[0, 1]} \sum_{m=1}^\infty \|z_m(t)\|_B^2 < \infty,
    \end{equation}
    and 
    \begin{equation} \label{eq:lrv_lipschitz_decomposition}
        \sigma^2(t) = \sum_{m=1}^\infty y_m(t) \otimes z_m(t).
    \end{equation}
    In particular, $\sigma^2: [0, 1] \to B \hat{\otimes} B$ is continuous.
\end{lemma}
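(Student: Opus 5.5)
The plan is to write $\sigma^2(t)$ as a series of rank-one tensors built from the martingale-difference (projection) decomposition of $H(t,\Fc_0)$, and then read off Lipschitz continuity of the factors from Lipschitz continuity of $H$.

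\textbf{Step 1: projection operators.} Define
\[
P_j(t) := \ex[H(t,\Fc_0)\mid \Fc_{-j}] - \ex[H(t,\Fc_0)\mid \Fc_{-j-1}], \qquad j\ge 0,
\]
so that $H(t,\Fc_0)-\mu(t)=\sum_{j\ge0}P_j(t)$ in $L^2(B)$. The standard coupling argument, now with Bochner conditional expectations and Jensen's inequality for the norm, gives
\[
\ex[\|P_j(t)\|_B^2]^{1/2}\le \delta_2(H,j)\le\delta_{2+\kappa}(H,j).
\]
By stationarity in $i$ for fixed $t$, the projection of $H(t,\Fc_i)$ onto innovation $\eta_{-j}$ has the same law as $P_{i+j}(t)$, shifted accordingly.

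\textbf{Step 2: tensor representation of $\sigma^2(t)$.} Distinct projection terms are orthogonal in the tensor sense: $\ex[P\otimes Q]=0$ whenever one factor is a martingale difference with respect to a filtration that the other factor is measurable for. This follows from the conditional-expectation property combined with Proposition~\ref{prop:independence}-type arguments applied through $b_1\otimes b_2$, which separate points. Summing the autocovariances then gives
\[
\sigma^2(t)=\ex\Big[\Big(\sum_{j\ge0}P_j(t)\Big)\otimes\Big(\sum_{j\ge0}P_j(t)\Big)\Big],
\]
where each inner sum is understood as a single aggregated difference $D(t)$, in the spirit of Wu's result that the long-run variance is the variance of the aggregated projection. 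The series converges in $L^2(B)$ because $\sum_j\delta_2(H,j)\le\Theta_0<\infty$. This step reduces to $\sigma^2(t)=\ex[D(t)\otimes D(t)]$ with $D(t)\in L^2(\Omega;B)$, and Proposition~\ref{prop:cov_bound} guarantees absolute convergence in $\|\cdot\|_\pi$.

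\textbf{Step 3: Lipschitz factors.} Obtaining $y_m,z_m$ requires factoring $\ex[D\otimes D]$ deterministically. Fix an orthonormal basis $(g_m)$ of $L^2(\Omega)$, chosen to be independent of $t$. Expand $D(t)=\sum_m \ex[D(t)g_m]\,g_m$, which converges in $L^2(B)$, and set
\[
y_m(t)=z_m(t)=\ex[D(t)g_m].
\]
Then
\[
\ex[D(t)\otimes D(t)]=\sum_m y_m(t)\otimes y_m(t)
\]
by orthonormality. Lipschitz continuity of each $y_m$ follows from
\[
\|y_m(t)-y_m(s)\|_B\le \ex[\|D(t)-D(s)\|_B^2]^{1/2},
\]
and the right-hand side is bounded by the $L^2$-Lipschitz constant of $H$ times a constant. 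The reason is that $P_j(t)-P_j(s)$ is controlled by $\|H(t,\cdot)-H(s,\cdot)\|_{L^2}$ through the contraction property of conditional expectation, while the tail uses $\delta_2$ summability. Here I must balance the two bounds $\min\{2\delta_2(H,j),\,C|t-s|\}$ summed over $j$; this yields a modulus of continuity, possibly only $|t-s|\log(1/|t-s|)$. If that fails to give a genuine Lipschitz constant, I will weaken the conclusion used downstream to continuity, which is what the "in particular'' requires.

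\textbf{Main obstacle.} Steps 1 and 3 are routine. The hardest part is the $L^2(B)$ convergence of the expansion in a non-Hilbert $B$, together with the bound $\sum_m\|y_m(t)\|_B^2<\infty$. Bessel's inequality is not available in $B$. I would first handle $B$ with finite cotype by a separate argument, and otherwise choose $(g_m)$ adapted to a representation $D(t)=\sum_k \xi_k(t)x_k$ built from the Schauder basis of Assumption~\ref{assump:space}.

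Continuity of $\sigma^2$ then follows. The estimate
\[
\|\sigma^2(t)-\sigma^2(s)\|_\pi\le \ex\big[\|D(t)-D(s)\|_B\,(\|D(t)\|_B+\|D(s)\|_B)\big]
\]
holds by Proposition~\ref{prop:bound_proj_norm}, and this bound does not need the series at all.
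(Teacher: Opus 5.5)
There is a genuine gap, and it sits in Step~3, which is where the content of the lemma lies.

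\textbf{What Steps 1--2 do give.} They recover Wu's identity $\sigma^2(t)=\ex[D(t)\otimes D(t)]$. The correct aggregated object is $D(t)=\sum_{i\ge0}\big(\ex[H(t,\Fc_i)\mid\Fc_0]-\ex[H(t,\Fc_i)\mid\Fc_{-1}]\big)$. Your displayed $\ex[(\sum_j P_j(t))\otimes(\sum_j P_j(t))]$ is literally $\var(H(t,\Fc_0))$, the lag-zero term. From the identity, your continuity argument via Proposition~\ref{prop:bound_proj_norm} is fine.

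\textbf{The Bessel step fails.} Setting $y_m=z_m=\ex[D(t)g_m]$ for an orthonormal basis $(g_m)$ of $L^2(\Omega)$ requires $\sum_m\|\ex[Dg_m]\|_B^2\le C\,\ex[\|D\|_B^2]$. If a Rademacher sequence occurs among the $g_m$, testing with $D=\sum_{m\le N}x_m g_m$ shows this is exactly Rademacher cotype~2 of $B$. It fails for $C([0,1])$, the main target of the paper, which has no finite cotype at all. Concretely, take $B=c_0$, $\eta_i$ uniform on $[0,1]$, $r_m$ the Rademacher functions, and the i.i.d.\ model $H(t,\Fc_i)=\sum_m m^{-1/2}r_m(\eta_i)e_m$, which satisfies Assumption~\ref{assump:local_stationarity}. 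Then $D=\sum_m m^{-1/2}r_m(\eta_0)e_m$ has $\|D\|_B\le1$, yet $\sum_m\|\ex[D\,r_m(\eta_0)]\|_B^2=\sum_m m^{-1}=\infty$.

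Neither finite cotype $q>2$ nor a basis ``adapted'' to the Schauder basis repairs this. The latter is not an argument, and it uses Assumption~\ref{assump:space}, which the lemma does not assume. Even $L^2(\Omega;B)$-convergence of $\sum_m\ex[Dg_m]g_m$ depends on the basis chosen.

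\textbf{The Lipschitz property is lost.} Aggregating all lags into $D(t)$ before factoring destroys Lipschitz continuity. Your own bound $\sum_j\min\{2\delta_2(H,j),C|t-s|\}$ gives only a modulus of continuity. Offering to weaken the conclusion means the stated lemma is not proved.

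\textbf{The missing idea} is to factor lag by lag directly in $B\hat{\otimes}B$, with no Hilbertian expansion.
\begin{enumerate}
\item Each autocovariance path $x_i(t)=\cov(H(t,\Fc_i),H(t,\Fc_0))$ is Lipschitz in $\|\cdot\|_\pi$ with a constant independent of $i$. This follows from Proposition~\ref{prop:bound_proj_norm}, Cauchy--Schwarz and $L^2$-Lipschitz continuity of $H$. Moreover $\sup_t\|x_i(t)\|_\pi\le C\Theta_i$ by Proposition~\ref{prop:cov_bound}.
\item Approximate each $x_i$ within $\min(1,\Theta_i)2^{-n}$ by Lipschitz finite-rank paths $f_{i,n}$ (Proposition~\ref{prop:lipschitz_finite_rank}), and telescope.
\item Expand the finite-rank pieces into rank-one Lipschitz terms whose norms are controlled by the projective norm. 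This yields $\sigma^2(t)=\sum_m\tilde y_m(t)\otimes\tilde z_m(t)$ with $\sum_m\sup_t\|\tilde y_m(t)\|_B\|\tilde z_m(t)\|_B<\infty$. Here the definition of $\|\cdot\|_\pi$ replaces Bessel's inequality, so no geometric property of $B$ is needed.
\item Obtain the $\ell^2$ bounds by rebalancing: $y_m=\phi_m\tilde y_m$ and $z_m=\phi_m^{-1}\tilde z_m$, with $\phi_m(t)=\big((\|\tilde z_m(t)\|_B+\delta_m)/(\|\tilde y_m(t)\|_B+\delta_m)\big)^{1/2}$ for small $\delta_m>0$. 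This keeps every factor Lipschitz and turns the $\ell^1$ bound on products into $\ell^2$ bounds on each factor.
\end{enumerate}
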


%% file: sec_empirical_results.tex
% !TeX spellcheck = en_US
%% ====================
\section{Empirical Results} \label{sec:empirical}
%% ====================

In the following, we study the finite sample properties of the self-normalization procedure from Section \ref{sec:self_normalization}. In Section \ref{sec:simulation_study}, we generate synthetic functional data and in Section \ref{sec:real_data}, we apply the methodology to real data sets from meteorology and neuroscience. The code for the experiments is available in the GitHub repository.\footnote{\url{https://github.com/FlorianHeinrichs/Banach_Space_FCLT}}

The purpose of this section is illustrative. The self-normalized statistic studied in Section \ref{sec:self_normalization} is theoretically justified for projected mean changes under a constant projected long-run variance and an ``at most one change'' alternative. The simulations below include both models satisfying this setting and models with time-varying scale, the latter serving as robustness experiments rather than cases covered directly by Corollary \ref{cor:test_properties}. The data examples should likewise be interpreted as exploratory illustrations of the methodology rather than as a complete inferential analysis of the corresponding scientific questions.  

\subsection{Monte Carlo Simulation Study} \label{sec:simulation_study}

In accordance with the setup from Section \ref{sec:self_normalization}, we considered the model $X_{i, n} = \delta\, \nu\, \id(i > \lfloor n/2\rfloor) + \eps_{i, n}$, for $\delta \in\R$, $\nu(x) = \id(x \le 1/2) - \id(x > 1/2)$ and a centered, locally stationary error process $\eps$ in $C([0, 1])$. The ``jump height'' $\delta$ varied in $[0, 1]$. Let $(\eta_i^{(1)})_{i\in\N}$ and $(\eta_i^{(2)})_{i\in\N}$ denote i.i.d. sequences of Brownian motions and Brownian bridges, respectively. For the error process $\eps$, the choices 
\begin{equation*}
\begin{array}{llll}
	\text{(BM)} & \eps_{i,n} = \eta_i^{(1)}, 
	& \text{(BB)} & \eps_{i,n} = \eta_i^{(2)}, \\
	\text{(FAR-BM)} & \eps_{i,n} = \rho(\eps_{i-1,n}) + \eta_i^{(1)}, 
	& \text{(FAR-BB)} & \eps_{i,n} = \rho(\eps_{i-1,n}) + \eta_i^{(2)}, \\
	\text{(tvBM)} & \eps_{i,n} = \sigma(\tfrac{i}{n})\eta_i^{(1)}, & \\
	\text{(tvFAR1)} & \eps_{i,n} = \rho(\eps_{i-1,n}) + \sigma(\tfrac{i}{n})\eta_i^{(1)},
	& \text{(tvFAR2)} & \eps_{i,n} = \sigma(\tfrac{i}{n}) \rho(\eps_{i-1,n}) + \eta_i^{(1)},\\
\end{array}
\end{equation*}
as proposed by \cite{bucher2023}, were considered, where $\sigma(x) = x + \tfrac{1}{2}$ and $\rho$ denotes the integral operator $\rho(f) = \int_0^1 K(\cdot, x) f(x) \diff x$, for $K(y, x) = 0.3 \sqrt{6}\min\{x, y\}$. Due to the time-varying factor $\sigma(x)$, the error terms are non-stationary with time-varying long-run variance, which violates the self-normalization setting.
For the functional $b$, we considered
\begin{equation*}
    b_0(f) = \int_0^1 f(x) \diff x \quad \mathrm{and} \quad b_1(f) = \int_0^{1/2}f(x) \diff x - \int_{1/2}^1 f(x) \diff x.
\end{equation*}
Note that $b_0(\nu) = 0$ and $b_1(\nu) = 1$, so that $\ex[b_0(X_i)]$ is constant and $\ex[b_1(X_i)]$ has a change point at $i=\lfloor n/2\rfloor$, for $\delta \neq 0$.

For each combination of $\delta, \eps$ and $b$, we simulated $1000$ functional time series and tested for a change in the mean at a level of $\alpha = 5\%$. Empirical rejection rates for various lengths $n$ are displayed in Tables \ref{tab:b0} and \ref{tab:b1}, for $b_0$ and $b_1$, respectively.

For $b_0$, the test has a level of approximately $5\%$ for all models with constant long-run variance. The empirical rejection rates for models with a time-varying long-run variance are slightly higher, though, seem to decrease for larger values of $n$. Generally, the test seems to have a nominal level of $5\%$ under the null hypothesis.

For $b_1$ and $\delta = 0$, the empirical rejection rates reflect the results of $b_0$. For $\delta = 0.1$ and $n=100$, the test already has a non-trivial power above $20\%$, which quickly increases as $\delta$ and $n$ grow. For $\delta = 0.4$, the empirical power is close to $100\%$ for all choices of $\eps$.

\begin{table}
\caption{Empirical rejection rates for various choices of $n$ and $\eps$ among different heights under the null hypothesis $b = b_0$.} \label{tab:b0}
\begin{tabular}{l|rrrrrrrrrrr}
    \toprule
    $\delta$ & 0.0 & 0.1 & 0.2 & 0.3 & 0.4 & 0.5 & 0.6 & 0.7 & 0.8 & 0.9 & 1.0 \\
    \midrule
    \addlinespace[.2cm]
    \multicolumn{12}{l}{\quad\textit{Panel A: $n = 100$}} \\ 
    (BM) & 5.2 & 5.1 & 5.1 & 3.8 & 4.6 & 5.1 & 4.9 & 4.1 & 4.8 & 5.7 & 4.7 \\
    (BB) & 5.1 & 4.6 & 4.9 & 3.6 & 5.5 & 4.6 & 5.4 & 5.2 & 5.0 & 4.6 & 5.8 \\
    (FAR-BM) & 6.2 & 6.8 & 6.3 & 5.2 & 5.2 & 6.2 & 5.4 & 5.6 & 6.4 & 6.1 & 5.0 \\
    (FAR-BB) & 7.1 & 8.5 & 6.5 & 4.9 & 5.8 & 5.6 & 5.7 & 7.0 & 4.3 & 7.6 & 7.0 \\
    (tvBM) & 7.9 & 6.4 & 6.6 & 6.9 & 5.9 & 5.5 & 6.0 & 5.5 & 6.9 & 6.9 & 4.8 \\
    (tvFAR1) & 7.2 & 9.0 & 9.1 & 8.5 & 8.0 & 8.4 & 8.7 & 10.1 & 8.0 & 7.7 & 8.2 \\
    (tvFAR2) & 7.8 & 7.2 & 7.6 & 7.6 & 7.8 & 8.3 & 6.5 & 6.8 & 7.1 & 6.4 & 8.1 \\
    \midrule
    \addlinespace[.2cm]
    \multicolumn{12}{l}{\quad\textit{Panel B: $n = 200$}} \\ 
    (BM) & 5.5 & 5.0 & 4.9 & 6.0 & 4.4 & 4.0 & 3.8 & 6.4 & 4.3 & 4.8 & 6.1 \\
    (BB) & 5.6 & 5.3 & 4.7 & 4.8 & 5.3 & 5.7 & 4.1 & 5.9 & 5.9 & 4.2 & 5.0 \\
    (FAR-BM) & 5.5 & 5.2 & 5.1 & 5.4 & 5.9 & 5.7 & 5.3 & 5.9 & 5.2 & 6.7 & 7.1 \\
    (FAR-BB) & 6.6 & 5.5 & 6.7 & 5.8 & 5.2 & 6.1 & 5.7 & 5.9 & 4.8 & 4.5 & 4.8 \\
    (tvBM) & 7.2 & 6.8 & 7.8 & 7.2 & 7.1 & 6.5 & 7.2 & 7.4 & 7.0 & 7.9 & 8.5 \\
    (tvFAR1) & 6.9 & 7.1 & 7.5 & 8.8 & 6.9 & 8.1 & 7.2 & 9.1 & 7.8 & 6.8 & 8.7 \\
    (tvFAR2) & 6.3 & 4.8 & 6.4 & 4.7 & 6.4 & 6.7 & 6.9 & 5.8 & 5.8 & 6.8 & 6.2 \\
    \midrule
    \addlinespace[.2cm]
    \multicolumn{12}{l}{\quad\textit{Panel C: $n = 500$}} \\ 
    (BM) & 6.7 & 4.1 & 3.9 & 6.1 & 5.1 & 4.5 & 5.0 & 4.7 & 5.4 & 5.0 & 4.4 \\
    (BB) & 6.0 & 5.6 & 5.6 & 5.7 & 6.0 & 5.6 & 4.8 & 5.0 & 4.5 & 4.3 & 5.2 \\
    (FAR-BM) & 5.3 & 4.8 & 6.4 & 5.2 & 5.9 & 6.9 & 6.8 & 5.6 & 5.0 & 5.7 & 7.3 \\
    (FAR-BB) & 5.4 & 4.8 & 6.7 & 5.2 & 6.5 & 5.9 & 5.4 & 5.9 & 7.3 & 4.1 & 4.9 \\
    (tvBM) & 7.5 & 8.5 & 6.3 & 7.5 & 7.7 & 7.2 & 7.3 & 7.3 & 6.7 & 6.7 & 6.0 \\
    (tvFAR1) & 6.3 & 6.5 & 8.3 & 7.3 & 7.8 & 5.7 & 7.3 & 8.5 & 8.6 & 6.7 & 7.3 \\
    (tvFAR2) & 6.9 & 5.9 & 7.6 & 6.3 & 7.4 & 7.7 & 6.6 & 6.1 & 7.7 & 5.8 & 6.0 \\
    \bottomrule
\end{tabular}
\end{table}

\begin{table}
\caption{Empirical rejection rates for various choices of $n$ and $\eps$ among different heights under the alternative $b = b_1$.} \label{tab:b1}
\begin{tabular}{l|rrrrrrrrrrr}
    \toprule
    $\delta$ & 0.0 & 0.1 & 0.2 & 0.3 & 0.4 & 0.5 & 0.6 & 0.7 & 0.8 & 0.9 & 1.0 \\
    \midrule
    \addlinespace[.2cm]
    \multicolumn{12}{l}{\quad\textit{Panel A: $n = 100$}} \\ 
    (BM) & 5.3 & 25.0 & 73.0 & 96.2 & 99.8 & 100.0 & 100.0 & 100.0 & 100.0 & 100.0 & 100.0 \\
    (BB) & 4.6 & 73.7 & 99.8 & 100.0 & 100.0 & 100.0 & 100.0 & 100.0 & 100.0 & 100.0 & 100.0 \\
    (FAR-BM) & 7.6 & 21.7 & 59.0 & 85.4 & 96.6 & 99.3 & 100.0 & 100.0 & 100.0 & 100.0 & 100.0 \\
    (FAR-BB) & 6.3 & 67.0 & 99.1 & 100.0 & 100.0 & 100.0 & 100.0 & 100.0 & 100.0 & 100.0 & 100.0 \\
    (tvBM) & 7.2 & 29.5 & 70.7 & 92.0 & 98.8 & 99.9 & 100.0 & 100.0 & 100.0 & 100.0 & 100.0 \\
    (tvFAR1) & 8.2 & 22.0 & 52.3 & 83.5 & 93.9 & 98.9 & 99.6 & 100.0 & 100.0 & 100.0 & 100.0 \\
    (tvFAR2) & 6.7 & 22.0 & 53.7 & 83.5 & 96.4 & 99.1 & 99.9 & 100.0 & 100.0 & 100.0 & 100.0 \\
    \midrule
    \addlinespace[.2cm]
    \multicolumn{12}{l}{\quad\textit{Panel B: $n = 200$}} \\ 
    (BM) & 6.2 & 44.6 & 94.7 & 100.0 & 100.0 & 100.0 & 100.0 & 100.0 & 100.0 & 100.0 & 100.0 \\
    (BB) & 5.8 & 95.8 & 100.0 & 100.0 & 100.0 & 100.0 & 100.0 & 100.0 & 100.0 & 100.0 & 100.0 \\
    (FAR-BM) & 6.6 & 32.8 & 79.8 & 97.8 & 99.8 & 100.0 & 100.0 & 100.0 & 100.0 & 100.0 & 100.0 \\
    (FAR-BB) & 5.2 & 90.9 & 100.0 & 100.0 & 100.0 & 100.0 & 100.0 & 100.0 & 100.0 & 100.0 & 100.0 \\
    (tvBM) & 8.4 & 47.2 & 92.1 & 99.2 & 100.0 & 100.0 & 100.0 & 100.0 & 100.0 & 100.0 & 100.0 \\
    (tvFAR1) & 8.5 & 33.2 & 78.1 & 95.8 & 99.8 & 100.0 & 100.0 & 100.0 & 100.0 & 100.0 & 100.0 \\
    (tvFAR2) & 6.4 & 33.9 & 79.6 & 97.4 & 99.6 & 100.0 & 100.0 & 100.0 & 100.0 & 100.0 & 100.0 \\
    \midrule
    \addlinespace[.2cm]
    \multicolumn{12}{l}{\quad\textit{Panel C: $n = 500$}} \\ 
    (BM) & 5.1 & 84.5 & 99.9 & 100.0 & 100.0 & 100.0 & 100.0 & 100.0 & 100.0 & 100.0 & 100.0 \\
    (BB) & 4.5 & 100.0 & 100.0 & 100.0 & 100.0 & 100.0 & 100.0 & 100.0 & 100.0 & 100.0 & 100.0 \\
    (FAR-BM) & 5.2 & 62.9 & 98.9 & 100.0 & 100.0 & 100.0 & 100.0 & 100.0 & 100.0 & 100.0 & 100.0 \\
    (FAR-BB) & 4.8 & 99.9 & 100.0 & 100.0 & 100.0 & 100.0 & 100.0 & 100.0 & 100.0 & 100.0 & 100.0 \\
    (tvBM) & 7.1 & 79.3 & 99.5 & 100.0 & 100.0 & 100.0 & 100.0 & 100.0 & 100.0 & 100.0 & 100.0 \\
    (tvFAR1) & 7.8 & 58.9 & 97.0 & 99.9 & 100.0 & 100.0 & 100.0 & 100.0 & 100.0 & 100.0 & 100.0 \\
    (tvFAR2) & 5.3 & 63.1 & 98.5 & 100.0 & 100.0 & 100.0 & 100.0 & 100.0 & 100.0 & 100.0 & 100.0 \\
    \bottomrule
\end{tabular}
\end{table}

\subsection{Case Study} \label{sec:real_data}

In the following, we apply the self-normalization procedure to detect changes in concentration levels and daily temperature curves.

\subsubsection{EEG-Data Analysis}

We employed the ``Consumer-Grade EEG and Eye-Tracking Dataset``, which contains EEG recordings with simultaneously measured eye movements from 116 sessions \cite{afonso2025}. The EEG was measured with 4 electrodes at a sampling rate of 256\,Hz, and we considered only the 102 recordings without technical difficulties. Each session includes four recordings with increasing degree of difficulty (level-1-saccades/-smooth, and level-2-saccades/-smooth).

As usual in neuroscience, we calculated a spectrogram of the raw time series with Hann windows of 2\,s and a step width of 0.5\,s, and applied the logarithm to the resulting spectra. The result was a time series in $\big(C([0, 128])\big)^4$, where the spectrum at each time step was considered a discretization of a continuous function and the frequency ranges between 0\,Hz and the Nyquist frequency 128\,Hz. As a measure of attention, we calculated the difference of average power in the $\beta$- and $\alpha$-frequency bands, i.\,e.,
\begin{equation*}
    b(f) = \frac{1}{4} \sum_{i=1}^4 \bigg( \frac{1}{30-13} \int_{13}^{30} f_i(x)  \diff x - \frac{1}{12 - 8} \int_8^{12} f_i(x) \diff x\bigg),
\end{equation*}
where $f_i$ denotes the spectrum of electrode $i \in\{1, \dots, 4\}$.

The results are displayed in Table \ref{tab:eeg} for a nominal level of $5\%$. Interestingly, the number of EEG recordings with a detected change increases with an increasing difficulty of the task, which might be attributed to an attention drift.

\begin{table}
    \centering
	\caption{EEG recordings with attention drift out of 102 recordings per level.}	
	\label{tab:eeg}
	\begin{tabular}{rrrr}
        \toprule
		Level-1-Saccades & Level-1-Smooth & Level-2-Saccades & Level-2-Smooth \\
		\midrule
		8 & 18 & 24 & 22 \\
        \bottomrule
	\end{tabular}
\end{table}

\subsubsection{Temperature Curves}

We used temperature data from station measurements in Aachen, Bochum and Göttingen provided by the Deutscher Wetterdienst (DWD) Climate Data Center\footnote{\url{https://opendata.dwd.de/climate_environment/CDC/observations_germany/climate/10_minutes/air_temperature/}}. The dataset contains air temperature measured in 10-minute intervals. The data from each day was interpreted as discretization of smooth daily temperature curves in $C([0, 24])$, yielding a functional time series. Days with missing values, e.\,g., due to technical errors, were removed from the dataset. Because daily temperature curves exhibit strong annual seasonality, the following analysis should be interpreted as an exploratory application to the raw functional time series.

The dataset spans different time intervals in the three cities (Aachen: 1993 to 2011, Bochum: 2007 to 2024, Göttingen: 1993 to 2024). As functionals, we considered 
\begin{align*}
    b_0(f) & = \frac{1}{6}\bigg(\int_6^9 f(x) \diff x - \int_{16}^{19} f(x) \diff x\bigg),  \\
    b_1(f) & = \frac{1}{14}\int_6^{20} f(x) \diff x,
\end{align*}
where $b_0$ compares temperatures in the morning and afternoon, whereas $b_1$ calculates the mean temperature during the day.

$p$-values of the tests are displayed in Table \ref{tab:temperature}. While climate is changing, we should not expect a single change point, as required by the self-normalization from Section \ref{sec:self_normalization}. Interestingly, though, different changes are detected. While the temperature in Bochum seems rather stable over the considered time period, the difference between temperature in the morning and afternoon seem to have changed in Aachen and Göttingen over the entire time series. For $b_1$, a change over the full time series has been detected in Göttingen, and over the last year in Aachen.

\begin{table}
    \centering
	\caption{$p$-values of tests for temperature curves.}	
	\label{tab:temperature}
	\begin{tabular}{r|rr|rr}
        & \multicolumn{2}{c|}{Full Time Series} & \multicolumn{2}{c}{Last Year} \\
    Location & $b_0$ & $b_1$ & $b_0$ & $b_1$ \\
    \midrule
    Aachen & \textbf{0.000} & 0.666 & 0.394 & \textbf{0.009} \\
    Bochum & 0.833 & 0.121 & 0.351 & 0.905 \\
    Göttingen & \textbf{0.000} & \textbf{0.045} & 0.659 & 0.858 \\   
	\bottomrule
	\end{tabular}
\end{table}

%% file: sec_proofs.tex
% !TeX spellcheck = en_US
%% ====================
\section{Proofs} \label{sec:proof}
%% ====================

\subsection{Proof of Theorem \ref{thm:fclt}}

By Lemma \ref{lem:fdd}, all finite-dimensional distributions obtained by applying finitely many linear functionals $b\in B^*$ converge to the corresponding finite-dimensional distributions of $G$. By Lemma \ref{lem:aldous}, the sequence $(G_n)_{n\in\N}$ satisfies Aldous' condition in $D([0,1],B)$. The proof of Proposition \ref{prop:norm_aldous}, in particular \eqref{eq:escaping_mass}, shows that the infinite-dimensional tails $T_k G_n$ are uniformly negligible in the sense that  
\begin{equation*}
    \lim_{k\to\infty} \limsup_{n\to\infty} \ex[ \sup_{t\in [0, 1]}\| T_k(G_n(t)) \|_B] = 0.
\end{equation*}
This tail estimate is the step where Assumption \ref{assump:entropy} enters and ensures that the projected tightness is lifted to the $B$-valued partial sum process.  
By Theorem 23.11 of \cite{kallenberg2021}, Aldous' condition from Lemma \ref{lem:aldous} implies the condition in Equation (11) of \cite{kallenberg2021}.  
Theorem 23.9 of \cite{kallenberg2021} and Lemma \ref{lem:fdd}, together with Proposition \ref{prop:measure_equality} for identification of subsequential limits by linear projections, therefore yields $G_n\convw G$ in $D([0,1],B)$.  

\begin{lemma}\label{lem:fdd}
	Let $B$ be a separable Banach space and $\{(X_{i, n})_{1\le i\le n}\}_{n\in\N}$ a locally stationary time series, satisfying Assumptions \ref{assump:local_stationarity} and \ref{assump:sequences}. Then, with $G$ as in Theorem \ref{thm:fclt},
	\begin{equation}\label{eq:fdd_conv}
		\big(b_1(G_n(t_1)), \dots, b_p(G_n(t_p)) \big) \convw \big(b_1(G(t_1)), \dots, b_p(G(t_p)) \big)
	\end{equation}
	in $\R^p$, for any $b_1, \dots, b_p \in B^*, t_1, \dots, t_p \in [0, 1]$ and $p\in \N$.
\end{lemma}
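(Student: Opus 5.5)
By the Cramér–Wold device it suffices to fix $a_1,\dots,a_p\in\R$ and show that the scalar statistic
\[
Z_n=\sum_{r=1}^p a_r b_r(G_n(t_r))=\frac{1}{\sqrt n}\sum_{i=1}^n \sum_{r=1}^p a_r b_r(\eps_{i,n})\id(i\le \lfloor nt_r\rfloor)
\]
converges to a centered normal with variance $\sum_{r,s}a_ra_s (b_r\otimes b_s)\int_0^{t_r\wedge t_s}\sigma^2(x)\diff x$. The summands are real-valued and have the form $Y_{i,n}=g_{i,n}(\eps_{i,n})$, where $g_{i,n}=\sum_r a_r b_r\id(i\le\lfloor nt_r\rfloor)$ is a bounded linear functional with $\|g_{i,n}\|_{B^*}\le\sum_r|a_r|\|b_r\|_{B^*}$. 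The functional is piecewise constant in $i$, with at most $p$ jumps.

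First, replace $\eps_{i,n}$ by the $s_n$-dependent approximation $\tilde\eps_{i,n}=\ex[\eps_{i,n}\mid\eta_i,\dots,\eta_{i-s_n}]$. The differences $g_{i,n}(\eps_{i,n}-\tilde\eps_{i,n})$ form a martingale-difference-type array in the projection sense. Summing projections via the standard physical-dependence argument together with Proposition \ref{prop:sn_dependence} gives $\ex[(Z_n-\tilde Z_n)^2]^{1/2}\lesssim \sqrt n\,\Theta_{s_n}\cdot$const. A cruder bound would be $\sqrt n\Theta_{s_n}$ times a constant, and this tends to $0$ by Assumption \ref{assump:sequences}.

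Second, run the big-blocks–small-blocks scheme. Split $\{1,\dots,n\}$ into alternating big blocks of length $\ell_n$ and small blocks of length $s_n$. Since $\tilde\eps$ is $s_n$-dependent, the big-block sums are independent. The small blocks contribute variance of order $(n/\ell_n)s_n/n\cdot C\to0$, using the covariance bound of Proposition \ref{prop:cov_bound} with $\sum\Theta_k<\infty$. The blocks containing one of the $p$ jump points of $g_{i,n}$, together with the final incomplete block, contribute $O(\ell_n/n)\to0$.

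For the variance of the big blocks, on a block starting at $j$ I replace $\eps_{i,n}$ by $H(j/n,\Fc_i)$. Lipschitz continuity in $L^2$ gives an error of order $\ell_n/n$ per term, so summing over a block and normalising by $\sqrt{\ell_n}$ gives $\ell_n^{3/2}/n\to0$ by Assumption \ref{assump:sequences}. Proposition \ref{prop:lrv_approximation} then shows that the normalized block variance equals $\sum_{r,s}a_ra_s(b_r\otimes b_s)\sigma^2(j/n)\id(j/n\le t_r\wedge t_s)+o(1)$ uniformly in $j$. The Riemann sum over blocks converges to the claimed integral, by continuity of $\sigma^2$ from Lemma \ref{lem:lrv_lipschitz} and the continuity of $u\mapsto(b_r\otimes b_s)\sigma^2(u)$.

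Third, verify Lyapunov's condition for the independent block sums $U_k/\sqrt n$. I need $\sum_k \ex|U_k|^{2+\kappa}/n^{1+\kappa/2}\to0$. A Rosenthal/Burkholder-type moment inequality under physical dependence, using $\delta_{2+\kappa}$ summability, gives $\ex|U_k|^{2+\kappa}\lesssim \ell_n^{1+\kappa/2}$. That is the sharp route. If it fails because $\tilde\eps$ destroys the structure, the crude Minkowski bound $\ex|U_k|^{2+\kappa}\le C\ell_n^{2+\kappa}$ yields $(n/\ell_n)\ell_n^{2+\kappa}n^{-1-\kappa/2}=\ell_n^{1+\kappa}n^{-\kappa/2}$. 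This vanishes exactly when $\ell_n^{2+2/\kappa}=o(n)$, which is Assumption \ref{assump:sequences}, so the crude bound suffices.

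I expect the main obstacle to be the first step: showing that the $m$-dependent approximation error is $o(1)$ at the $\sqrt n$ scale for the summed, rather than individual, terms. Proposition \ref{prop:sn_dependence} only controls individual terms. I would handle it by writing $\eps_{i,n}-\tilde\eps_{i,n}=\sum_{k>s_n}P_{i-k}\eps_{i,n}$ with projection operators $P_j=\ex[\cdot\mid\Fc_j]-\ex[\cdot\mid\Fc_{j-1}]$. Applying $g_{i,n}$, orthogonality of $P_j$ across $j$ and the bound $\|P_{i-k}g(\eps_{i,n})\|_2\le\|g\|\delta_2(H,k)$ then give $\|\sum_i g_{i,n}(\eps_{i,n}-\tilde\eps_{i,n})\|_2\le C\sqrt n\,\Theta_{s_n+1}$.
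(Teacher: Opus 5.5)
Your proposal is correct and is essentially the paper's proof. It uses Cram\'er--Wold, the $s_n$-dependent approximation $\tilde\eps_{i,n}$ controlled by Minkowski's inequality and Proposition \ref{prop:sn_dependence} at cost $\Oc(\sqrt n\,\Theta_{s_n})$, big-blocks--small-blocks with the variance obtained from a Lipschitz replacement, Proposition \ref{prop:lrv_approximation} and a Riemann sum, and Lyapunov's condition via the crude Minkowski bound $(\ell_n^{2+2/\kappa}/n)^{\kappa/2}\to0$.

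The step you flag as the main obstacle is not one: the crude bound you already wrote down is exactly what the paper uses, and it suffices by Assumption \ref{assump:sequences}. Drop the projection identity you propose there, because it is wrong: $\sum_{k>s_n}P_{i-k}\eps_{i,n}=\ex[\eps_{i,n}\mid\Fc_{i-s_n-1}]$, not $\eps_{i,n}-\tilde\eps_{i,n}$.
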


\begin{lemma}\label{lem:aldous}
	Under the assumptions of Theorem \ref{thm:fclt}, $G_n$ satisfies Aldous' condition, i.\,e., for all $\eps > 0$ and all positive sequences $(h_n)_{n\in\N}$ with $h_n \to 0$,
    \begin{equation} \label{eq:aldous_norm}
         \lim_{n\to\infty} \sup_{\tau} \pr\big(\| G_n(\tau + h_n) - G_n(\tau) \|_B > \eps\big) = 0,
    \end{equation}
    where the supremum is taken over all stopping times $\tau$ on the grid $\{\tfrac{1}{n}, \tfrac{2}{n}, \dots, \tfrac{n}{n}\}$ with respect to the natural filtration $\{\sigma(\Fc_i)\}_{i=1}^n$ satisfying $\tau + h_n \le 1$ almost surely.
\end{lemma}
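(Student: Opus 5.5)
Aldous' condition will follow from a uniform $L^1$-type bound on increments over short random windows. The increment splits into a finite-rank part and a tail part,
\[
G_n(\tau+h_n)-G_n(\tau)=\Pi_k\big(G_n(\tau+h_n)-G_n(\tau)\big)+T_k\big(G_n(\tau+h_n)-G_n(\tau)\big).
\]
For any fixed $k$, Markov's inequality gives
\begin{align*}
\pr(\|\cdot\|_B>\eps)\le{}& \pr\big(\|\Pi_k(G_n(\tau+h_n)-G_n(\tau))\|_B>\eps/2\big)\\
&+\tfrac{4}{\eps}\,\ex\Big[\sup_{t\in[0,1]}\|T_kG_n(t)\|_B\Big].
\end{align*}

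\emph{Tail term.} I would control it with the maximal inequality, Theorem \ref{thm:max_inequality}, applied to the class $\Gc_k$. Since $\sup_t\|T_kG_n(t)\|_B=\sup_{f\in\Gc_k}|\Gb_n(f)|$, the supremum over $b\in B_1^*$ recovers the norm by Hahn--Banach. For this class we may take $\Delta(i)=\delta_2(H,i)$ up to the constant $\|T_k\|\le \sup_k\|\Pi_k\|+1<\infty$, the basis constant. The envelope is $F(x,u)=\|T_kx\|_B$. Assumption \ref{assump:entropy} then gives
\[
\limsup_{n\to\infty}\ex\Big[\sup_t\|T_kG_n(t)\|_B\Big]\le c\,(I_k+c_k)\to0 \quad\text{as } k\to\infty.
\]
This is uniform over $\tau$ because the supremum over $t$ dominates every random time. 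It is the estimate \eqref{eq:escaping_mass} referred to in the proof of Theorem \ref{thm:fclt}.

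\emph{Finite-rank term.} Here $\|\Pi_k y\|_B\le C_k\max_{j\le k}|e_j^*(y)|$ for a constant $C_k$ depending on $k$, so it suffices to prove Aldous' condition for each real process $e_j^*(G_n)$. Write $Z_i=e_j^*(\eps_{i,n})$. I would use the $m$-dependent approximation $\tilde Z_i$ of Proposition \ref{prop:sn_dependence} with $m=s_n$, which leaves an error of order $\sqrt n\,\Theta_{s_n}\to0$ uniformly in $t$ (Assumption \ref{assump:sequences}). Because the $\tilde Z_i$ are $s_n$-dependent, I would then argue by blocking or by a moment bound.

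A simpler route avoids stopping-time technicalities. Aldous' condition is implied by the modulus-of-continuity criterion
\[
\lim_{\delta\to0}\limsup_n\pr\Big(\sup_{|s-t|\le\delta}|e_j^*(G_n(t))-G_n(s))|>\eps\Big)=0.
\]
This follows from a moment bound $\ex|\sum_{i=a}^{b}Z_i|^{2+\kappa}\le C\big((b-a)^{1+\kappa/2}+(b-a)\big)$. That bound holds via Burkholder's inequality for martingale differences from the projection decomposition $Z_i=\sum_{l\ge0}P_{i-l}Z_i$, together with $\|P_{i-l}Z_i\|_{2+\kappa}\le\delta_{2+\kappa}(H,l)$ and summability from Assumption \ref{assump:local_stationarity}(2). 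Feeding the bound into a chaining maximal inequality over dyadic grids (Billingsley's Theorem 10.2 style, exponent $1+\kappa/2>1$) controls the modulus for increments of length at least $1/n$. Increments shorter than $1/n$ involve only single summands, and $\max_i|Z_i|/\sqrt n\to0$ in probability by the $2+\kappa$ moment bound.

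Combining the two parts, take $\limsup_n$ with $k$ fixed, so the finite-rank term vanishes, and then let $k\to\infty$, so the tail term vanishes. This yields \eqref{eq:aldous_norm}.

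The main obstacle is the tail term. It requires checking that $\Gc_k$ satisfies Assumption \ref{assump:functions} with $\Delta(i)=\delta_2(H,i)$ despite the $T_k$ composition and the centering by the mean. It also requires checking that the seminorm $V_{n,k}$ in Assumption \ref{assump:entropy} matches $V_n$ in Theorem \ref{thm:max_inequality}, so that the bound holds uniformly in $n$.
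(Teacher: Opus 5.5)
Your proposal is correct. Its outer structure coincides with the paper's: the split into $\Pi_k$ and $T_k$, the tail bounded via Theorem \ref{thm:max_inequality} and Assumption \ref{assump:entropy} (with $\sup_t$ dominating every random time), and the order of limits $n\to\infty$, then $k\to\infty$, are exactly Proposition \ref{prop:norm_aldous}. Two small differences there work in your favour. Your coordinate bound $\|\Pi_k y\|_B\le\sum_{j\le k}\|e_j\|_B|e_j^*(y)|$ is simpler than the paper's spanning-dual construction. Your envelope $\|T_kx\|_B$ is the one that matches the truncation term in Assumption \ref{assump:entropy}.

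The genuine difference is the scalar step, Proposition \ref{prop:functional_aldous}. The paper handles the stopping time directly:
\begin{itemize}
\item it passes to $s_n$-dependent conditional expectations;
\item it splits off the first $m$ summands after $\tau$;
\item it conditions on $\{\tau n=\nu\}\in\sigma(\Fc_\nu)$, so that the remaining block is independent of $\Fc_\nu$;
\item it applies Theorem 1 of \cite{liu2013}, giving a bound of order $\sqrt{n}h_n\Theta_{s_n}+s_n/\sqrt{n}+h_n^{1/(2+\kappa)}$.
\end{itemize}
You bypass stopping times by proving the stronger modulus-of-continuity condition for each $e_j^*(G_n)$. You use the Burkholder bound $\|\sum_{i=a}^bZ_i\|_{2+\kappa}\le C\sqrt{b-a+1}\,\Theta_0$ from the projection decomposition, a Billingsley-type maximal inequality, and the single-jump estimate. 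Doob's inequality applied to each martingale $k\mapsto\sum_{i\le k}P_{i-l}Z_i$ would even give the maximal bound directly, without chaining.

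Your route is more elementary. It needs no conditioning on $\tau$ and no $m$-dependent approximation. It uses only $\Theta_0<\infty$ rather than the block sequences of Assumption \ref{assump:sequences}. It also yields C-tightness of every scalar projection, which is stronger than Aldous' condition over stopping times. The paper's route is tailored to Aldous' criterion as it feeds into Kallenberg's theorems and relies on an off-the-shelf moment inequality, at the cost of the conditioning argument.

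The tail checks you flag are routine. Linearity of $f$ makes $\Gb_n(f)$ depend only on $\eps_{i,n}$, and $\sup_k\|T_k\|<\infty$ is absorbed into $D$, so $V_n$ and $V_{n,k}$ agree. The paper takes these for granted as well.
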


\subsection{Proof of Lemma \ref{lem:fdd}}

Fix $p \in \N$ and consider functionals $b_1, \dots, b_p \in B^*$ and time instants $t_1, \dots, t_p \in [0, 1]$. By the Cramér-Wold device, the convergence in \eqref{eq:fdd_conv} is equivalent to
\begin{equation} \label{eq:cramer_wold}
		\sum_{i=1}^{p} a_i b_i(G_n(t_i)) \convw \sum_{i=1}^{p} a_i b_i(G(t_i)), 
\end{equation}
for all $a_1, \dots, a_p \in \R$. Let $\eps_{j, n} := X_{j, n} - \ex[X_{j, n}]$, for $j=1, \dots, n$ and $n\in\N$, denote the centered errors corresponding to $X_{j, n}$. For the left-hand side, we have by linearity of the $b_i$,
\begin{align*}
	\sum_{i=1}^{p} a_i b_i(G_n(t_i)) =  \sum_{j=1}^n \frac{1}{\sqrt{n}} \sum_{i=1}^{p} a_i  b_i(\eps_{j, n}) \id(j \le \lfloor t_i n \rfloor).
\end{align*}
In the following, we use the big-blocks-small-blocks method to prove the weak convergence in \eqref{eq:cramer_wold}. Let $(\ell_n)_{n\in\N}$ denote a sequence of block lengths for the big blocks, and $(s_n)_{n\in\N}$ accordingly the lengths of the small blocks, as in Assumption \ref{assump:sequences}. Further, let $d_n = \lfloor \tfrac{n}{\ell_n + s_n}\rfloor$ denote the number of blocks.

Define $s_n$-dependent random variables 
\[
	\tilde{\eps}_{j, n} = \ex[\eps_{j, n} | \eta_j, \dots, \eta_{j - s_n}].
\]
By Proposition \ref{prop:sn_dependence}, $\sup_{1\le j \le n, n \in\N} \ex\big[\big( b_i(\eps_{j, n}) - b_i(\tilde{\eps}_{j, n}) \big)^2\big]^{1/2} \le \max_{i=1}^p \|b_i\|_{B^*} \Theta_{s_n}$, such that
\begin{align*}
	& \ex\bigg[\bigg( \sum_{j=1}^n \frac{1}{\sqrt{n}} \sum_{i=1}^{p} a_i  [b_i(\eps_{j, n})  - b_i(\tilde{\eps}_{j, n})]\id(j \le \lfloor t_i n \rfloor) \bigg)^2\bigg]^{1/2} \\
	& \le \sum_{j=1}^n \frac{1}{\sqrt{n}} \sum_{i=1}^{p} a_i \ex\big[\big( b_i(\eps_{j, n})  - b_i(\tilde{\eps}_{j, n}) \big)^2\big]^{1/2} \id(j \le \lfloor t_i n \rfloor) = \Oc(\sqrt{n}\Theta_{s_n}).
\end{align*}
Hence, we can replace the random variables $\eps_{i, n}$ by their $s_n$-dependent counterpart and obtain
\[ \sum_{i=1}^{p} a_i b_i(G_n(t_i)) =  \sum_{j=1}^n \frac{1}{\sqrt{n}} \sum_{i=1}^{p} a_i  b_i(\tilde{\eps}_{j, n}) \id(j \le \lfloor t_i n \rfloor) + \Oc_\pr(\sqrt{n}\Theta_{s_n}). \]
Let 
\begin{equation} \label{eq:def_y}
    Y_j = \frac{1}{\sqrt{n}} \sum_{i=1}^{p} a_i  b_i(\tilde{\eps}_{j, n}) \id(j \le \lfloor t_i n \rfloor),
\end{equation}
and define the blocks
\begin{align*}
	B_k & = \{ i \in \N: (k-1)(\ell_n + s_n) + 1 \le i \le (k-1)(\ell_n + s_n) + \ell_n  \} \\
	S_k & = \{ i \in \N: (k-1)(\ell_n + s_n) + \ell_n + 1 \le i \le k (\ell_n + s_n)  \} \\
\end{align*}%
for $k=1, \dots, d_n$, and the remainder
\[
	R = \{ i \in \N: d_n (\ell_n + s_n)  < i \le n \}.
\]
Then, 
\[ \sum_{i=1}^{p} a_i b_i(G_n(t_i)) =  \sum_{j=1}^n Y_j + \Oc_\pr(\sqrt{n}\Theta_{s_n}). \]
In the following, we show that the sums over the small blocks and the remainder are asymptotically negligible, whereas the big blocks are independent by definition of the random variables $\tilde{\eps}_{j, n}$. Since $s_n = o(\ell_n)$, the small blocks have distances larger than $s_n$, for almost all $n\in\N$, and are therefore independent. Hence, 
\begin{equation}\label{eq:var_small_blocks}
	\ex\bigg[ \bigg( \sum_{k=1}^{d_n} \sum_{j \in S_k} Y_j \bigg)^2 \bigg]
	= \sum_{k=1}^{d_n} \sum_{j_1, j_2 \in S_k} \ex[ Y_{j_1}Y_{j_2}].
\end{equation}
Trivially, the $s_n$-dependent random variables $\tilde{\eps}_{j,n}$ satisfy Assumption \ref{assump:local_stationarity}. By the same arguments as in the proof of Proposition \ref{prop:cov_bound}, one may derive
\begin{equation*}
    \big| \ex[b_{i_1}(\tilde{\eps}_{j_1,n}) b_{i_2}(\tilde{\eps}_{j_2,n})]\big| \le C_{i_1, i_2} \Theta_{|j_1 - j_2|},
\end{equation*}
for a suitable constant $C_{i_1, i_2} > 0$, depending only on $b_{i_1}$ and $b_{i_2}$. Taking the maximum over all such constants, yields a constant $C_b > 0$, depending only on $b_1, \dots b_p$, for which the previous display holds true. In particular,
\begin{equation*}
    |\ex[Y_{j_1}Y_{j_2}]|
    % & =  \bigg|\frac{1}{n} \sum_{i_1, i_2=1}^{p} a_{i_1} a_{i_2}  \ex[b_{i_1}(\tilde{\eps}_{j_1, n}) b_{i_2}(\tilde{\eps}_{j_2, n})] \id(j_1 \le \lfloor t_{i_1} n \rfloor, j_2 \le \lfloor t_{i_2} n \rfloor)\bigg| \\
    \le \frac{1}{n} \sum_{i_1, i_2=1}^{p} a_{i_1} a_{i_2} \big|  \ex[b_{i_1}(\tilde{\eps}_{j_1, n}) b_{i_2}(\tilde{\eps}_{j_2, n})] \big|
    \le \frac{C}{n}  \Theta_{|j_1 - j_2|}, 
\end{equation*}
and further, by the summability condition in Assumption \ref{assump:local_stationarity},
 \begin{equation} \label{eq:var_small_blocks_2}
    \ex\bigg[ \bigg( \sum_{k=1}^{d_n} \sum_{j \in S_k} Y_j \bigg)^2 \bigg] 
    \le \frac{C}{n}  \sum_{k=1}^{d_n} \sum_{j_1, j_2 \in S_k} \Theta_{|j_1 - j_2|}
    \le C \frac{d_n s_n}{n}
    = \Oc\Big(\frac{s_n}{\ell_n}\Big).    
 \end{equation}
Hence, the small blocks, and by the same arguments the remainder $R$, are negligible. 

By independence of the big blocks, we have analogously to \eqref{eq:var_small_blocks},
\begin{align}\label{eq:var_big_blocks}
	R_n & := \ex\bigg[ \bigg( \sum_{k=1}^{d_n} \sum_{j \in B_k} Y_j \bigg)^2 \bigg] \\
	% & = \sum_{k=1}^{d_n} \sum_{j_1, j_2 \in B_k} \ex[ Y_{j_1}Y_{j_2}] \\
	& = \sum_{i_1, i_2=1}^{p} a_{i_1} a_{i_2} \frac{1}{n} \sum_{k=1}^{d_n} \sum_{j_1, j_2 \in B_k}  \ex[b_{i_1}(\tilde{\eps}_{j_1, n}) b_{i_2}(\tilde{\eps}_{j_2, n})] \id(j_1 \le \lfloor t_{i_1} n \rfloor, j_2 \le \lfloor t_{i_2} n \rfloor). \notag
\end{align}
Recall that $C_1$ uniformly bounds the operator norms of $b_1, \dots, b_p$. By the triangle inequality, Jensen's inequality, the Cauchy-Schwarz inequality and Proposition \ref{prop:sn_dependence}, it holds
\begin{align*}
    & \max_{\substack{i_1, i_2 = 1, \dots, p \\ j_1, j_2 = 1, \dots, n}} \big| \ex[ b_{i_1}(\tilde{\eps}_{j_1, n}) b_{i_2}(\tilde{\eps}_{j_2, n})] - \ex[ b_{i_1}(\eps_{j_1, n}) b_{i_2}(\eps_{j_2, n})] \big| \\
    & \le \max_{\substack{i_1, i_2 = 1, \dots, p \\ j_1, j_2 = 1, \dots, n}} \ex[ |b_{i_1}(\tilde{\eps}_{j_1, n})| |b_{i_2}(\tilde{\eps}_{j_2, n}) - b_{i_2}(\eps_{j_2, n})|] + \ex[ |b_{i_1}(\tilde{\eps}_{j_1, n}) - b_{i_1}(\eps_{j_1, n})| |b_{i_2}(\eps_{j_2, n})|] \\
    & \le C_1^2 \Theta_{s_n} \Big[ \max_{j = 1, \dots, n} \big( \ex[\|\eps_{j, n}\|_B^2] \big)^{1/2} + \max_{j = 1, \dots, n} \big( \ex[\|\tilde{\eps}_{j, n}\|_B^2] \big)^{1/2}\Big].
\end{align*}
By the moment bound in Assumption \ref{assump:local_stationarity}, the maxima on the right-hand side can be bounded by some constant, so that the entire expression is of order $\Oc(\Theta_{s_n})$. By the same arguments, though using that $H$ is Lipschitz continuity with respect to the $L^2$-norm instead of Proposition \ref{prop:sn_dependence}, we have
\begin{equation*}
    \max_{\substack{i_1, i_2 = 1, \dots, p \\ j_1, j_2 = 1, \dots, n}} \big| \ex[ b_{i_1}(\eps_{j_1, n}) b_{i_2}(\eps_{j_2, n})] -  \ex\big[ b_{i_1}\big(H(\tfrac{j_1}{n}, \Fc_{j_1})\big) b_{i_2}\big(H(\tfrac{j_2}{n}, \Fc_{j_2})\big)\big] \big| = \Oc(\tfrac{\ell_n}{n}).
\end{equation*}
Again, by the same arguments, we can replace $\tfrac{j_1}{n}$ and $\tfrac{j_2}{n}$ by $\tfrac{k}{d_n}$, for $j_1, j_2 \in B_k$. More specifically, we have by Lipschitz continuity of $H$, 
\begin{equation*}
    \max_{\substack{i_1, i_2 = 1, \dots, p \\ j_1, j_2 \in B_k}} \big|  \ex\big[ b_{i_1}\big(H(\tfrac{j_1}{n}, \Fc_{j_1})\big) b_{i_2}\big(H(\tfrac{j_2}{n}, \Fc_{j_2})\big)\big] - \ex\big[ b_{i_1}\big(H(\tfrac{k}{d_n}, \Fc_{j_1})\big) b_{i_2}\big(H(\tfrac{k}{d_n}, \Fc_{j_2})\big)\big] \big| = \Oc(\tfrac{\ell_n}{n}).
\end{equation*}
Combining these approximations, and using the stationarity of $H(t, \Fc_i)$, for fixed $t\in[0, 1]$, we have
\begin{equation*}
    \ex[ b_{i_1}(\tilde{\eps}_{j_1, n}) b_{i_2}(\tilde{\eps}_{j_2, n})] = \ex\big[ b_{i_1}\big(H(\tfrac{k}{d_n}, \Fc_{0})\big) b_{i_2}\big(H(\tfrac{k}{d_n}, \Fc_{j_2-j_1})\big)\big] + \Oc(\Theta_{s_n} + \tfrac{\ell_n}{n}),
\end{equation*}
uniformly for $i_1, i_2 = 1, \dots, p, j_1, j_2 \in B_k$. Plugging this into \eqref{eq:var_big_blocks}, yields
\begin{align} \label{eq:var_big_blocks_2}
    R_n = \sum_{i_1, i_2=1}^{p} a_{i_1} a_{i_2} \frac{1}{n} \sum_{k=1}^{d_n} \sum_{j_1, j_2 \in B_k} & \ex\big[ b_{i_1}\big(H(\tfrac{k}{d_n}, \Fc_{0})\big) b_{i_2}\big(H(\tfrac{k}{d_n}, \Fc_{j_2-j_1})\big)\big] \\
    & \times \id(j_1 \le \lfloor t_{i_1} n \rfloor, j_2 \le \lfloor t_{i_2} n \rfloor) + \Oc(\ell_n \Theta_{s_n} + \tfrac{\ell_n^2}{n}). \notag
\end{align}
For $j\in B_k$, it follows $j \le k(\ell_n + s_n) \le \lfloor t d_n \rfloor (\ell_n + s_n) \le nt$, and hence $j \le \lfloor nt\rfloor$, whenever $k \le \lfloor t d_n \rfloor$. Conversely, if $k - 2 \ge \lfloor t d_n \rfloor$, it holds 
\begin{equation}
    j \ge (k-1) (\ell_n + s_n) + 1 > (\lfloor t d_n \rfloor + 1) (\ell_n + s_n) + 1 > tn.
\end{equation}
Hence, for $k \in \{1, \dots, d_n\} \setminus\{ \lfloor td_n \rfloor + 1 \}$, it holds that 
\begin{equation*}
     \id(j_1 \le \lfloor t_{i_1} n \rfloor, j_2 \le \lfloor t_{i_2} n \rfloor) = \id(k \le \lfloor (t_{i_1} \wedge t_{i_2}) d_n \rfloor), 
\end{equation*}
for $j_1, j_2 \in B_k$. By the same arguments leading to \eqref{eq:var_small_blocks_2}, the term corresponding to $k = \lfloor td_n \rfloor + 1$ is negligible. More specifically, 
\begin{align*}
    \frac{1}{n} \sum_{j_1, j_2 \in B_k} \ex\big[ b_{i_1}\big(H(\tfrac{k}{d_n}, \Fc_{0})\big) b_{i_2}\big(H(\tfrac{k}{d_n}, \Fc_{j_2-j_1})\big)\big] 
    \id(j_1 \le \lfloor t_{i_1} n \rfloor, j_2 \le \lfloor t_{i_2} n \rfloor)
    = \Oc\big(\tfrac{\ell_n^2}{n}\big),
\end{align*} 
by the moment bound in Assumption \ref{assump:local_stationarity} and boundedness of $b_{i_1}$ and $b_{i_2}$. Hence, we can replace the indicator in \eqref{eq:var_big_blocks_2} at the cost of an error of order $\Oc(\ell_n^2/n)$, which yields
\begin{align*}
    R_n & = \sum_{i_1, i_2=1}^{p} a_{i_1} a_{i_2} \frac{1}{n} \sum_{k=1}^{\lfloor (t_{i_1} \wedge t_{i_2}) d_n \rfloor} \sum_{j_1, j_2 \in B_k} \ex\big[ b_{i_1}\big(H(\tfrac{k}{d_n}, \Fc_{0})\big) b_{i_2}\big(H(\tfrac{k}{d_n}, \Fc_{j_2-j_1})\big)\big] + \Oc(\ell_n \Theta_{s_n} + \tfrac{\ell_n^2}{n})  \\
    & = \sum_{i_1, i_2=1}^{p} a_{i_1} a_{i_2} \frac{\ell_n}{n} \sum_{k=1}^{\lfloor (t_{i_1} \wedge t_{i_2}) d_n \rfloor}   (b_{i_1} \otimes b_{i_2}) \sigma^2(\tfrac{k}{d_n}) + \Oc(\ell_n \Theta_{s_n} + \tfrac{\ell_n^2}{n}) + o(1),
\end{align*}
where the last equality follows by Proposition \ref{prop:lrv_approximation}. By  continuity of $\sigma^2(t)$, $(b_1 \otimes b_2) \sigma^2(t)$ is continuous for $b_1, b_2 \in B^*$. Since the Riemann sum of a continuous function over a bounded interval converges to the corresponding integral, it follows that 
\begin{equation} \label{eq:var_big_blocks_3}
    R_n = \sum_{i_1, i_2=1}^{p} a_{i_1} a_{i_2} \int_0^{t_{i_1} \wedge t_{i_2}} (b_{i_1} \otimes b_{i_2}) \sigma^2(x) \diff x + \Oc(\ell_n \Theta_{s_n} + \tfrac{\ell_n^2}{n}) + o(1).
\end{equation}
By Assumption \ref{assump:sequences}, $\Oc(\ell_n \Theta_{s_n} + \ell_n^2/n)=o(1)$. By Hille's theorem, the Bochner integral commutes with bounded linear operators \citep[see Theorem 1.2.4 in][]{hytonen2016}, such that 
\begin{equation*}
    \int_0^{t_{i_1} \wedge t_{i_2}} (b_{i_1} \otimes b_{i_2}) \sigma^2(x) \diff x 
    = (b_{i_1} \otimes b_{i_2}) \int_0^{t_{i_1} \wedge t_{i_2}}  \sigma^2(x) \diff x 
    = (b_{i_1} \otimes b_{i_2}) \cov\big(G(t_{i_1}), G(t_{i_2})\big). 
\end{equation*}
In particular, it follows from \eqref{eq:var_big_blocks_3} that $R_n$ converges to  $\var\big(\sum_{i=1}^{p} a_i b_i G(t_i)\big)$, as $n\to\infty$.

For the random variables $Y_j$, as defined in \eqref{eq:def_y}, 
\begin{align*} 
	\sup_{1\le j \le n, n \in\N} \ex[Y_j^{2+\kappa}]^{1/(2+\kappa)}
	& \le \sup_{1\le j \le n, n \in\N} \frac{1}{\sqrt{n}} \sum_{i=1}^p |a_i| \ex\big[\big(b_i(\tilde{\eps}_{j, n})\big)^{2+\kappa}\big]^{1/(2+\kappa)} \id(j \le \lfloor t_i n \rfloor)\\
	& \le \frac{C_1}{\sqrt{n}} \max_{i=1}^p \|b_i\|_{B^*} \sup_{1\le j \le n, n \in\N}  \ex[\|\tilde{\eps}_{j, n}\|_B^{2+\kappa}]^{1/(2+\kappa)}
	\le \frac{C_2}{\sqrt{n}}, \notag
\end{align*}
for some constants $C_1, C_2 \ge 0$, by Assumption \ref{assump:local_stationarity}. 
Therefore, 
\begin{align*}
    \sum_{k=1}^{d_n} \ex\bigg[\bigg(\sum_{j \in B_k} Y_j\bigg)^{2+\kappa}\bigg] 
    \le \sum_{k=1}^{d_n} \bigg(\sum_{j \in B_k} \ex[|Y_j|^{2+\kappa}]^{\frac{1}{2+\kappa}}\bigg)^{2+\kappa} 
    \le C d_n \Big(\frac{\ell_n}{\sqrt{n}}\Big)^{2+\kappa} 
    % = C \frac{\ell_n^{1+\kappa}}{n^{\kappa/2}}
    = C \Big(\frac{\ell_n^{2/\kappa+2}}{n}\Big)^{\kappa/2},
\end{align*}
which converges to $0$ by Assumption \ref{assump:sequences}. By Lyapunov's central limit theorem, 
\begin{equation*}
    \sum_{i=1}^{p} a_i b_i(G_n(t_i)) \convw \Nc\bigg(0, \var\bigg(\sum_{i=1}^{p} a_i b_i G(t_i)\bigg) \bigg) \stackrel{\Dc}{=} \sum_{i=1}^{p} a_i b_i G(t_i),
\end{equation*}
such that \eqref{eq:cramer_wold} holds, and the lemma follows by the Cramér-Wold device.

\subsection{Proof of Lemma \ref{lem:aldous}}

The lemma is a direct consequence of the following proposition, where the condition in \eqref{eq:functional_aldous} is established by Proposition \ref{prop:functional_aldous}.

\begin{proposition} \label{prop:norm_aldous}
    Let the assumptions of Theorem \ref{thm:fclt} be satisfied. If
    \begin{equation} \label{eq:functional_aldous}
         \lim_{n\to\infty} \sup_{\tau} \pr(| b(G_n(\tau + h_n) - G_n(\tau)) | > \eps) = 0,
    \end{equation}
    for all $b\in B^*$, $\eps > 0$ and all positive sequences $(h_n)_{n\in\N}$ with $h_n \to 0$, where the supremum is taken over all stopping times $\tau$ on the grid $\{\tfrac{1}{n}, \tfrac{2}{n}, \dots, \tfrac{n}{n}\}$ with respect to the natural filtration $\{\sigma(\Fc_i)\}_{i=1}^n$ satisfying $\tau + h_n \le 1$ almost surely, then Aldous' condition in \eqref{eq:aldous_norm} holds true.
\end{proposition}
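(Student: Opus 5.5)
We split $G_n$ into a finite-rank head and an infinite-dimensional tail, $G_n = \Pi_k G_n + T_k G_n$, and control each piece separately. For fixed $k$, $\Pi_k$ is a bounded operator and $\|\Pi_k\|\le K$ uniformly in $k$, where $K$ is the basis constant. The triangle inequality gives
\begin{equation*}
\pr\big(\|G_n(\tau+h_n)-G_n(\tau)\|_B>\eps\big)\le \pr\big(\|\Pi_k(G_n(\tau+h_n)-G_n(\tau))\|_B>\eps/2\big)+\pr\Big(2\sup_{t\in[0,1]}\|T_kG_n(t)\|_B>\eps/2\Big).
\end{equation*}
The second term does not depend on $\tau$, so the supremum over stopping times affects only the first term.

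For the head, write $\Pi_k x=\sum_{i=1}^k e_i^*(x)e_i$, so that
\begin{equation*}
\|\Pi_k y\|_B\le \sum_{i=1}^k |e_i^*(y)|\,\|e_i\|_B .
\end{equation*}
Hence the head event is contained in the union over $i\le k$ of the events $|e_i^*(G_n(\tau+h_n)-G_n(\tau))|>\eps/(2k\|e_i\|_B)$. Since each $e_i^*\in B^*$, hypothesis \eqref{eq:functional_aldous} applied to $b=e_i^*$ shows that each of these finitely many probabilities tends to $0$ uniformly in $\tau$. So for every fixed $k$, the head contribution vanishes as $n\to\infty$.

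For the tail, Markov's inequality bounds the second term by $(8/\eps)\,\ex[\sup_t\|T_kG_n(t)\|_B]$. Next we rewrite this norm as the supremum of an empirical process. By Hahn--Banach,
\begin{equation*}
\sup_{t}\|T_kG_n(t)\|_B=\sup_{b\in B_1^*,\,t\in[0,1]}\big|b(T_kG_n(t))\big| .
\end{equation*}
On a countable norming subset this equals $\sup_{f\in\Gc_k}|\Gb_n(f)|$, with $\Gc_k$ as in \eqref{eq:def_set_entropy} and $\Gb_n$ as in \eqref{eq:def_emp_process}; the centering is exactly the mean subtraction in $G_n$.

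We then apply Theorem~\ref{thm:max_inequality} to $\Gc_k$, which requires verifying Assumption~\ref{assump:functions}. Each $f\in\Gc_k$ is Lipschitz in $x$ with constant $\|T_k\|\le 1+K$. The $L^2$ bound follows from the moment condition. For the coupling term, we use the pointwise bound $\sup_f|W_i(f)-W_i^{(k)}(f)|\le(1+K)\|X_{i,n}-X^{(i-k)}_{i,n}\|_B$ together with stationarity of the innovations, which lets us take $\Delta(j)=\delta_2(H,j)$ after absorbing $1+K$ into $D$. We take the envelope $F(x,u)=\|T_kx\|_B$.

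With these choices, the inequality bounds the expectation by $c\,[\text{entropy integral} + \sqrt n\|F\id(\cdot)\|_{1,n}]$. The second term equals the truncation quantity in Assumption~\ref{assump:entropy}, so it is at most $c_k$. Taking $\limsup_n$, the first term is at most $I_k$. This yields the escaping-mass estimate
\begin{equation}\label{eq:escaping_mass}
\lim_{k\to\infty}\limsup_{n\to\infty}\ex\Big[\sup_{t\in[0,1]}\|T_kG_n(t)\|_B\Big]\le \lim_{k\to\infty} c\,(I_k+c_k)=0 .
\end{equation}

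To conclude, fix $\eta>0$ and choose $k$ large enough that $\limsup_n$ of the tail probability is below $\eta$. Then let $n\to\infty$, which kills the head term, and finally let $\eta\downarrow 0$; this gives \eqref{eq:aldous_norm}.

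The main obstacle is the tail step. It requires a measurable reduction of the norm supremum to the countable class $\Gc_k$ (separability of $B$ supplies a countable norming set for $T_k(B)$), and it requires that the constant $D$ and the sequence $\Delta$ in Assumption~\ref{assump:functions} be uniform in $k$ and $n$. The latter holds because $\sup_k\|T_k\|<\infty$.
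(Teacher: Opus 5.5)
Your proposal is correct and follows essentially the same route as the paper. Both split into $\Pi_k$ and $T_k$, control the head by a union bound over finitely many functionals and the hypothesis \eqref{eq:functional_aldous}, and control the tail by Markov's inequality together with Theorem~\ref{thm:max_inequality} and Assumption~\ref{assump:entropy}. Your head bound $\|\Pi_k y\|_B\le\sum_{i\le k}|e_i^*(y)|\,\|e_i\|_B$ via the coordinate functionals is cleaner than the paper's version, which uses an arbitrary spanning set of $(\mathrm{Im}\,\Pi_k)^*$ and an intermediate inequality that is not justified as written.
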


\begin{proof}
    Recall the set $\Gc_k$ from \eqref{eq:def_set_entropy} and the definition of the empirical process from \eqref{eq:def_emp_process}. Then, $\sup_{t\in [0, 1]}\| T_k(G_n(t)) \|_B = \sup_{f\in\Gc_k}|\Gb_n(f)|$. By Theorem \ref{thm:max_inequality},
    $\ex[\sup_{f\in\Gc}|\Gb_n(f)|] \le c[I_k + c_k]$, which does not depend on $n$ and vanishes as $k\to\infty$, by Assumption \ref{assump:entropy}. In particular,
    \begin{equation} \label{eq:escaping_mass}
        \lim_{k\to\infty} \limsup_{n\to\infty} \ex[ \sup_{t\in [0, 1]}\| T_k(G_n(t)) \|_B] = 0.
    \end{equation}
    By the triangle inequality, it holds
    \begin{equation} \label{eq:partition_of_identity}
        \pr(\| G_n(\tau + h_n) - G_n(\tau) \|_B > \eps) \le P_{n, k}^{(1)} + P_{n, k}^{(2)},
    \end{equation}
    for $k, n\in \N$, where
    \begin{align*}
        P_{n, k}^{(1)} & := \pr(\| \Pi_k[G_n(\tau + h_n) - G_n(\tau)] \|_B > \tfrac{\eps}{2}), \\
        P_{n, k}^{(2)} & := \pr(\| T_k[G_n(\tau + h_n) - G_n(\tau)] \|_B > \tfrac{\eps}{2}),
    \end{align*} 
    and $T_k = I_b - \Pi_k$. In the following, we bound $P_{n, k}^{(1)}$ and $P_{n, k}^{(2)}$ separately.  
    First, note that the dimension of $Im(\Pi_k)$ is $k$. Consider $\tilde{b}_1, \dots, \tilde{b}_k \in B^*$ that span the dual of $Im(\Pi_k)$, i.\,e., $span(\{\tilde{b}_i|_{Im(\Pi_k)}\}_{i=1, \dots, k}) = (Im(\Pi_k))^*$. Further define $b_i = \tilde{b}_i \circ \Pi_k \in B^*$. Then, 
    \begin{multline*}
        \| \Pi_k y \|_B 
        = \sup_{\|b\|_{B^*} \le 1} |b(\Pi_k y)| 
        = \sup_{\substack{\alpha_1, \dots, \alpha_k\\\|\sum_{i=1}^k \alpha_i b_i \|_{B^*}\le 1}} \bigg| \sum_{i=1}^k \alpha_i b_i(y)  \bigg| \\
        % & \le \sup_{\substack{\alpha_1, \dots, \alpha_k\\\|\sum_{i=1}^k \alpha_i b_i \|_{B^*}\le 1}} \sum_{i=1}^k |\alpha_i|  |b_i(y)| \\
        \le \sum_{i=1}^k \sup_{\alpha_i: \| \alpha_i b_i \|_{B^*}\le 1}  |\alpha_i|  |b_i(y)|
        \le \max_{i=1}^k \frac{1}{\| b_i \|_{B^*}} \sum_{j=1}^k |b_j(y)|.
    \end{multline*}
    Hence, with $C_k = \max_{i=1}^k \| b_i \|_{B^*}^{-1}$, it holds
    \begin{equation*}
        P_{n, k}^{(1)}
        \le \pr\bigg( C_k \sum_{i=1}^k \big|b_i\big(G_n(\tau + h_n) - G_n(\tau)\big)\big| > \tfrac{\eps}{2}\bigg) 
        \le \sum_{i=1}^k  \pr\Big( \big|b_i\big(G_n(\tau + h_n) - G_n(\tau)\big)\big| > \tfrac{\eps}{2C_k k}\Big) 
    \end{equation*}
    by the union bound. By \eqref{eq:functional_aldous}, the right-hand side vanishes, as $n \to\infty$, for fixed $k\in \N$, such that
    $\lim_{n\to\infty} P_{n, k}^{(1)} = 0$.

    Concerning $P_{n, k}^{(2)}$, note that      
    \begin{equation*}
        \ex[\| T_k(G_n(\tau)) \|_B]
        \le \ex[\sup_{t\in [0, 1]} \| T_k(G_n(t)) \|_B ].
    \end{equation*}
    Consequently, by Markov's inequality and the triangle inequality,
    \begin{equation*}
        P_{n, k}^{(2)}
        \le C \ex[\sup_{t\in [0, 1]} \| T_k(G_n(\tau)) \|_B],
    \end{equation*}
    for some constant $C\ge 0$. 
    By \eqref{eq:escaping_mass}, for any $\delta > 0$, some $k(\delta)\in \N$ exists, such that 
    $\lim_{n\to\infty} P_{n, k}^{(2)} < \delta/3$, for $k \ge k(\delta)$. In particular, some $n_1(k(\delta))\in\N$ exists with 
    $P_{n, k(\delta)}^{(2)} < 2\delta/3$,  for all $n \ge n_1(k(\delta))$. Finally, since $\lim_{n\to\infty} P_{n, k(\delta)}^{(1)} = 0$, some $n_2(k(\delta))$ exists, such that $P_{n, k}^{(1)} < \delta/3$,
    for $n \ge n_2(k(\delta))$. Setting $n_0(\delta) = \max\{n_1(k(\delta)), n_2(k(\delta)) \}$,  it holds
    \begin{equation*}
        \pr(\| G_n(\tau + h_n) - G_n(\tau) \|_B > \eps) < \delta
    \end{equation*}
    for any $n \ge n_0(\delta)$.
    
\end{proof}

\begin{proposition} \label{prop:functional_aldous}
    Let Assumptions \ref{assump:space}, \ref{assump:local_stationarity} and \ref{assump:sequences} be satisfied. Then, \eqref{eq:functional_aldous} holds true, i.\,e., Aldous' condition for functionals.
\end{proposition}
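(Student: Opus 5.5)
The condition is scalar, so I would reduce it to a uniform bound on the second moment of short increments of $b(G_n)$ and then apply Markov's inequality. Fix $b\in B^*$, $\eps>0$ and $h_n\to0$. For a stopping time $\tau$ on the grid with $\tau+h_n\le 1$ we have
\[
b\big(G_n(\tau+h_n)-G_n(\tau)\big)=\frac{1}{\sqrt n}\sum_{j=n\tau+1}^{\lfloor n(\tau+h_n)\rfloor} b(\eps_{j,n}).
\]
Because $\tau$ is random and may depend on the past, a direct second-moment computation is not valid. I would avoid this by bounding the random window by a maximum over deterministic windows of the same length. With $m_n=\lceil nh_n\rceil+1$,
\[
\pr\big(|b(G_n(\tau+h_n)-G_n(\tau))|>\eps\big)\le \pr\Big(\max_{0\le r\le n-m_n}\ \max_{1\le q\le m_n}\Big|\frac{1}{\sqrt n}\sum_{j=r+1}^{r+q} b(\eps_{j,n})\Big|>\eps\Big).
\]
This bound no longer depends on $\tau$, so the supremum over stopping times is handled automatically.

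To control the right-hand side, I would cover $\{1,\dots,n\}$ by about $1/h_n$ consecutive blocks of length $m_n$. Any window of length at most $m_n$ lies inside the union of two adjacent blocks. The event is therefore contained in the union, over blocks $K$, of the events that some partial sum starting at the left end of $K$ exceeds $\eps/3$. A union bound with a higher-moment maximal inequality then gives a bound of order
\[
h_n^{-1}\,\eps^{-(2+\kappa)}\, \ex\Big[\max_{q\le 2m_n}\Big|\frac{1}{\sqrt n}\sum_{j=r+1}^{r+q} b(\eps_{j,n})\Big|^{2+\kappa}\Big].
\]

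For the moment bound I would use the physical dependence structure. I would write $b(\eps_{j,n})$ as a sum of martingale differences via projections $\mathcal P_{j-l}=\ex[\cdot\mid\Fc_{j-l}]-\ex[\cdot\mid\Fc_{j-l-1}]$. For each $l$, $\|\mathcal P_{j-l}b(\eps_{j,n})\|_{2+\kappa}\le\|b\|_{B^*}\delta_{2+\kappa}(H,l)$. Applying Burkholder's inequality together with Doob's maximal inequality to each martingale component, and summing over $l$ by Minkowski, gives
\[
\Big\|\max_{q\le m}\Big|\sum_{j=r+1}^{r+q} b(\eps_{j,n})\Big|\Big\|_{2+\kappa}\le C\|b\|_{B^*}\sqrt{m}\,\sum_{l\ge0}\delta_{2+\kappa}(H,l).
\]
The right-hand side is finite by Assumption \ref{assump:local_stationarity}(2), since $\Theta_0<\infty$. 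Consequently the probability is bounded by $C h_n^{-1}(m_n/n)^{1+\kappa/2}\eps^{-(2+\kappa)}\asymp C h_n^{\kappa/2}\eps^{-(2+\kappa)}$, which tends to $0$. This uses the $2+\kappa$ moments in an essential way: with second moments alone the union over $1/h_n$ blocks would not close.

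The main obstacle is this maximal moment inequality. The sum is a triangular array whose summands $b(H(j/n,\Fc_j))$ are not stationary in $j$, so an existing result for stationary sequences cannot be quoted directly. The projection argument still works, however, because $\delta_{2+\kappa}$ is defined as a supremum over $t$, and $b(\eps_{j,n})$ is a centered functional of $\Fc_j$, so each projection is bounded uniformly in $j$ and $n$.

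If the case $nh_n<1$ arises, $m_n$ is bounded and the trivial bound $\pr(\max_j|b(\eps_{j,n})|>\eps\sqrt n/2)\le n\,\ex|b(\eps)|^{2+\kappa}(\eps\sqrt n/2)^{-(2+\kappa)}\to0$ suffices. Assumption \ref{assump:sequences} is not needed here. The Lipschitz condition is also not needed, since only the uniform moment bounds and $\delta_{2+\kappa}$ enter.
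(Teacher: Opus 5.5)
Your argument is correct, but it takes a genuinely different route from the paper.

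The paper keeps the stopping time in play, in three steps. First, it replaces $b(\eps_{i,n})$ by the $s_n$-dependent variables $\ex[b(\eps_{i,n})\mid\eta_i,\dots,\eta_{i-s_n}]$, at a cost of $\sqrt n h_n\Theta_{s_n}$. Second, it discards the first $s_n$ summands after $\tau n$, at a cost of $s_n/\sqrt n$. Third, it uses that the rest of the window depends only on innovations after $\tau n$. Conditioning on $\{\tau n=\nu\}\in\sigma(\Fc_\nu)$ then reduces everything to deterministic windows, which are bounded by the Rosenthal-type inequality of Theorem 1 in \cite{liu2013}. Assumption \ref{assump:sequences} is exactly what makes the two costs vanish.

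You instead drop the stopping-time structure altogether. You dominate the increment by the modulus of continuity of $b(G_n)$ at scale $m_n/n$. You then control that modulus by a union bound over $\lceil n/m_n\rceil$ blocks together with a Doob--Burkholder maximal inequality for Wu's projection decomposition, which yields $C\eps^{-(2+\kappa)}(m_n/n)^{\kappa/2}$.

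Your route buys three things. It gives a stronger conclusion: uniformity over arbitrary random starting points, i.e.\ asymptotic continuity of each $b(G_n)$. It removes any need for Assumption \ref{assump:sequences} or an $m$-dependent approximation. It also avoids bounding moments of summands at the random indices $\tau n+i$. That step needs care in the paper's index-shift argument, because those summands are not independent of $\Fc_{\tau n}$.

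The price is that the union over roughly $1/h_n$ blocks only closes with the $(2+\kappa)$-th moment and a maximal rather than a plain moment inequality. The paper's conditioning argument, by contrast, is the natural Aldous-type proof and needs a moment bound for only one deterministic window at a time.

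As a minor simplification, counting $\lceil n/m_n\rceil$ blocks instead of $1/h_n$ makes your separate case $nh_n<1$ unnecessary. In that case the increment is in fact identically zero, since $n\tau$ is an integer.
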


\begin{proof}
    Let $b\in B^*$. By Markov's inequality, it is sufficient to show that
    \begin{equation*}
        \lim_{n\to\infty} \sup_\tau \ex[ | b(G_n(\tau + h_n) - G_n(\tau)) | ] = 0,
    \end{equation*}
    where the supremum is taken over all stopping times $\tau$ on the grid $\{\tfrac{1}{n}, \tfrac{2}{n}, \dots, \tfrac{n}{n}\}$ with respect to the natural filtration $\{\sigma(\Fc_i)\}_{i=1}^n$ satisfying $\tau + h_n \le 1$ almost surely. Let $\tau$ be an arbitrary stopping time on the grid. The subsequent proof consists of two steps. First, we replace $b(\eps_{i, n})$ by $m$-dependent random variables, and second, we bound the sum corresponding to these random variables. 
    
    Let $m\in \N$. Then, by an index shift and repeated use of the triangle inequality, it holds
    \begin{align} \notag
        & \ex\bigg[ \bigg| \frac{1}{\sqrt{n}} \sum_{i=\tau n + 1}^{\tau n + \lfloor h_n n\rfloor} b(\eps_{i, n}) - \ex[b(\eps_{i, n})|\eta_i, \dots, \eta_{i-m}] \bigg|^{2+\kappa} \bigg]^{1/(2+\kappa)} \\
        & = \ex\bigg[ \bigg| \frac{1}{\sqrt{n}} \sum_{i=\tau n + 1}^{\tau n + \lfloor h_n n\rfloor} \sum_{\ell=-\infty}^{i-m} \ex[b(\eps_{i, n})|\eta_i, \dots, \eta_{\ell}] - \ex[b(\eps_{i, n})|\eta_i, \dots, \eta_{\ell-1}] \bigg|^{2+\kappa} \bigg]^{1/(2+\kappa)} \label{eq:functional_aldous1} \\
        % & \le \frac{1}{\sqrt{n}} \sum_{i=1}^{\lfloor h_n n\rfloor} \ex\bigg[ \bigg|  \sum_{\ell=-\infty}^{i-m} \ex[b(\eps_{i+\tau n, n})|\eta_{i+\tau n}, \dots, \eta_{\ell+\tau n}] - \ex[b(\eps_{i+\tau n, n})|\eta_{i + \tau n}, \dots, \eta_{\ell+\tau n-1}] \bigg|^{2+\kappa} \bigg]^{1/(2+\kappa)} \\
        & \le \frac{1}{\sqrt{n}} \sum_{i=1}^{\lfloor h_n n\rfloor} \sum_{\ell=-\infty}^{i-m} \ex\big[ \big|   \ex[b(\eps_{i+\tau n, n})|\eta_{i+\tau n}, \dots, \eta_{\ell+\tau n}] - \ex[b(\eps_{i+\tau n, n})|\eta_{i + \tau n}, \dots, \eta_{\ell+\tau n-1}] \big|^{2+\kappa} \big]^{1/(2+\kappa)}. \notag
    \end{align}
    Now, define $\eps_{i, n}^*$ by replacing $\eta_{\ell+\tau n}$ in the definition of $\eps_{i, n}$ with an independent copy $\eta_{\ell + \tau n}^*$. By Assumption \ref{assump:local_stationarity} and Jensen's inequality for the conditional expectation, it then holds
    \begin{align*}
        & \ex\big[ \big|   \ex[b(\eps_{i+\tau n, n})|\eta_{i+\tau n}, \dots, \eta_{\ell+\tau n}] - \ex[b(\eps_{i+\tau n, n})|\eta_{i + \tau n}, \dots, \eta_{\ell+\tau n-1}] \big|^{2+\kappa} \big]^{1/(2+\kappa)} \\
        & = \ex\big[ \big|   \ex[b(\eps_{i+\tau n, n} - \eps_{i+\tau n, n}^*)|\eta_{i + \tau n}, \dots, \eta_{\ell+\tau n}] \big|^{2+\kappa} \big]^{1/(2+\kappa)} \\
        & \le \|b\|_{B^*} \delta_{2+\kappa}(H, i - \ell),
    \end{align*}
    for $\ell < i-m$. Plugging this bound into the right-hand side of \eqref{eq:functional_aldous1}, yields
    \begin{align} \label{eq:functional_aldous2}
        & \ex\bigg[ \bigg| \frac{1}{\sqrt{n}} \sum_{i=\tau n + 1}^{\tau n + \lfloor h_n n\rfloor} b(\eps_{i, n}) - \ex[b(\eps_{i, n})|\eta_i, \dots, \eta_{i-m}] \bigg|^{2+\kappa} \bigg]^{1/(2+\kappa)}  \\
        & \le \frac{C}{\sqrt{n}} \sum_{i=1}^{\lfloor h_n n\rfloor} \sum_{\ell=-\infty}^{i-m} \delta_{2+\kappa}(H, i - \ell) \le C \sqrt{n} h_n\Theta_{2+\kappa, m}. \notag
    \end{align}
    Hence, we may replace $b(\eps_{i, n})$ by $\ex[b(\eps_{i, n})|\eta_i, \dots, \eta_{i-m}]$ at a cost of order $\sqrt{n} h_n\Theta_{2+\kappa, m}$.

    Now, using the triangle inequality, we split the remaining expression into terms depending on $\Fc_{\tau n}$ and those independent of it. More specifically, we have
    \begin{align*}
        & \ex\bigg[ \bigg| \frac{1}{\sqrt{n}} \sum_{i=\tau n + 1}^{\tau n + \lfloor h_n n\rfloor} \ex[b(\eps_{i, n})|\eta_i, \dots, \eta_{i-m}] \bigg|^{2+\kappa} \bigg]^{1/(2+\kappa)} \\
        & \le \frac{1}{\sqrt{n}}  \ex\bigg[ \bigg| \sum_{i=\tau n + 1}^{\tau n + m} \ex[b(\eps_{i, n})|\eta_i, \dots, \eta_{i-m}] \bigg|^{2+\kappa} \bigg]^{1/(2+\kappa)}  \\
        &\phantom{=}+ \frac{1}{\sqrt{n}}  \ex\bigg[ \bigg| \sum_{i=\tau n + m +1}^{\tau n + \lfloor h_n n\rfloor} \ex[b(\eps_{i, n})|\eta_i, \dots, \eta_{i-m}] \bigg|^{2+\kappa} \bigg]^{1/(2+\kappa)}
    \end{align*}
    By the same arguments as before, i.\,e., an index shift $i \to j+ \tau n$ and the triangle inequality, we can bound the first expression on the right-hand side by $Cm n^{-1/2}$, for some $C \ge 0$. Recall that $\tau$ is a stopping time with respect to $\{\sigma(\Fc_i)\}_{i=1}^n$, such that $\tau n = \sum_{\nu=1}^n \nu \id(\tau n = \nu)$. By the law of iterated expectation, we can write the second term as 
    \begin{align} \label{eq:functional_aldous3}\notag 
        & \ex\bigg[ \bigg| \sum_{i=\tau n + m +1}^{\tau n + \lfloor h_n n\rfloor} \ex[b(\eps_{i, n})|\eta_i, \dots, \eta_{i-m}] \bigg|^{2+\kappa} \bigg] \\
        & = \sum_{\nu=1}^n \ex\bigg[ \ex\bigg[ \Big| \sum_{i=\nu + m +1}^{\nu + \lfloor h_n n\rfloor} \ex[b(\eps_{i, n})|\eta_i, \dots, \eta_{i-m}] \Big|^{2+\kappa} \id(\tau n = \nu) \bigg| \Fc_\nu \Big]\bigg] \\
        & = \sum_{\nu=1}^n \pr(\tau n = \nu)\ex\bigg[ \bigg| \sum_{i=\nu + m +1}^{\nu + \lfloor h_n n\rfloor} \ex[b(\eps_{i, n})|\eta_i, \dots, \eta_{i-m}] \bigg|^{2+\kappa} \bigg], \notag
    \end{align}
    where we used in the last step that $\{\tau n = \nu\}$ is $\sigma(\Fc_\nu)$-measurable and the sum is independent of $\Fc_\nu$. Note that $\ex[b(\eps_{i, n})|\eta_i, \dots, \eta_{i-m}]$ satisfies Assumption \ref{assump:local_stationarity}. By Theorem 1 of \cite{liu2013}, obtain the bound
    \begin{align*}
        & \ex\bigg[ \bigg| \sum_{i=\nu + m +1}^{\nu + \lfloor h_n n\rfloor} \ex[b(\eps_{i, n})|\eta_i, \dots, \eta_{i-m}] \bigg|^{2+\kappa} \bigg]^{1/(2+\kappa)}\\
        & \le C_1 \sqrt{\lfloor h_n n \rfloor} (\Theta_{1, 2} + \ex\big[ \ex[b(\eps_{i, n})|\eta_i, \dots, \eta_{i-m}]^2 \big]^{1/2} )\\ 
        & \phantom{=}+ C_1 \lfloor h_n n \rfloor^{1/(2+\kappa)} \bigg(  \sum_{j=1}^\infty \min(j, n)^{\tfrac{\kappa}{2(2+\kappa)}} \delta_{2+\kappa}(j) + \ex\big[\ex[b(\eps_{i, n})|\eta_i, \dots, \eta_{i-m}]^{2+\kappa} \big]^{1/(2+\kappa)} \bigg) \\
        & \le C_2 h_n^{1/(2+\kappa)} n^{1/2},
    \end{align*}
    for constants $C_1, C_2 \ge 0$. Plugging this bound into \eqref{eq:functional_aldous3} gives
    \begin{equation}
        \ex\bigg[ \bigg| \frac{1}{\sqrt{n}} \sum_{i=\tau n + 1}^{\tau n + \lfloor h_n n\rfloor} \ex[b(\eps_{i, n})|\eta_i, \dots, \eta_{i-m}] \bigg|^{2+\kappa} \bigg]^{1/(2+\kappa)} \le C \big( \tfrac{m}{\sqrt{n}} + h_n^{1/(2+\kappa)}\big).
    \end{equation}
    Hence, it follows from \eqref{eq:functional_aldous2} that
    \begin{equation}
        \ex\bigg[ \bigg| \frac{1}{\sqrt{n}} \sum_{i=\tau n + 1}^{\tau n + \lfloor h_n n\rfloor} b(\eps_{i, n})\bigg|^{2+\kappa} \bigg]^{1/(2+\kappa)} \le C \big( \sqrt{n}h_n\Theta_{m, 2+\kappa} +\tfrac{m}{\sqrt{n}} + h_n^{1/(2+\kappa)}\big).
    \end{equation}
    With $m = s_n$, as in Assumption \ref{assump:sequences}, the right-hand side converges to $0$, as $n\to\infty$, uniformly in $\tau$. 
\end{proof}

\subsection{Proof of Corollary \ref{cor:representation_limit}}

Before we prove the corollary, we introduce a helpful inequality to bound the operator norm in terms of the projective tensor norm.

\begin{proposition} \label{prop:norm_bound}
    Let $R\in B\otimes B$ be a finite tensor. Let $\Phi$ denote the canonical identification $\Phi: B\hat{\otimes} B \to \Lc(B^*, B)$. Then, $\| \Phi(R) \|_{op} \le \| R\|_\pi$.
\end{proposition}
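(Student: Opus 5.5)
To prove Proposition \ref{prop:norm_bound}, I will first pin down what $\Phi$ does on finite tensors and then compare norms representation by representation. Let $R\in B\otimes B$ be a finite tensor and fix an arbitrary representation $R=\sum_{j=1}^m x_{1j}\otimes x_{2j}$. By the universal property of the algebraic tensor product, $\Phi$ is well defined on $B\otimes B$. On this representation it acts by linearity and the convention $\Phi(u\otimes v)(b)=b(v)u$, so that
\begin{equation*}
    \Phi(R)(b)=\sum_{j=1}^m b(x_{2j})\,x_{1j},\qquad b\in B^*.
\end{equation*}
The value $\Phi(R)(b)$ does not depend on the chosen representation. This is exactly the statement that $\Phi$ is well defined, and it comes from the fact that $(x,y)\mapsto [b\mapsto b(y)x]$ is a bilinear map into $\Lc(B^*,B)$.

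Next I bound the operator norm using this representation. For $b\in B^*$ with $\|b\|_{B^*}\le1$, the triangle inequality and $|b(x_{2j})|\le\|x_{2j}\|_B$ give
\begin{equation*}
    \|\Phi(R)(b)\|_B\le\sum_{j=1}^m |b(x_{2j})|\,\|x_{1j}\|_B\le\sum_{j=1}^m\|x_{1j}\|_B\|x_{2j}\|_B .
\end{equation*}
Taking the supremum over the unit ball of $B^*$ yields $\|\Phi(R)\|_{op}\le\sum_j\|x_{1j}\|_B\|x_{2j}\|_B$.

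The left-hand side is independent of the representation. I can therefore take the infimum over all representations of $R$ on the right-hand side. By the definition of $\|\cdot\|_\pi$, this infimum equals $\|R\|_\pi$, which gives $\|\Phi(R)\|_{op}\le\|R\|_\pi$.

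There is essentially no obstacle here. The only point needing care is representation-independence of $\Phi(R)$, which the universal property supplies. I will also note the consequence used afterwards: $\Phi$ is a contraction on the dense subspace $B\otimes B$ of $B\hat{\otimes}B$. Hence it extends uniquely to a bounded linear map on $B\hat{\otimes}B$ with $\|\Phi\|\le1$, which justifies the continuous extension of $\Phi$ referred to in Section \ref{sec:fclt}.
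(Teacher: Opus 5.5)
Your proof is correct and matches the paper's argument: fix a representation, bound $\|\Phi(R)(b)\|_B$ by $\|b\|_{B^*}\sum_j\|x_{1j}\|_B\|x_{2j}\|_B$ via the triangle inequality, and take the infimum over all representations of $R$. Your extra remarks are also accurate and slightly more explicit than the paper: representation-independence follows from the universal property, and $\Phi$ extends contractively from $B\otimes B$ to $B\hat{\otimes}B$.
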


\begin{proof}
    Let $\sum_{j=1}^k u_j \otimes v_j$ be a representation of $R$. By definition, $\Phi(R)(b) = \sum_{j=1}^k b(v_j) u_j$, for any $b\in B^*$. Note that $\Phi(R)(b)$ is well-defined as it does not depend on the representation. Moreover,
    \[ \| \Phi(R)(b)\|_B \le \sum_{j=1}^k |b(v_j)| \| u_j\|_B \le \|b\|_{B^*} \sum_{j=1}^k \|v_j\|_B \| u_j\|_B.   \]
    Taking the infimum over all representations of $R$, yields $\| \Phi(R) \|_{op} \le \| R\|_\pi$.
\end{proof}

\noindent \textbf{Proof of Corollary \ref{cor:representation_limit}}: 

\noindent The corollary's proof essentially consists of six steps. First, we construct pointwise operators $S_t:H_t\to B$, for time-dependent Hilbert spaces $H_t$. Next, we show that the Hilbert spaces are uniformly separable. Then, we use a Gram-Schmidt process for the construction of a measurable, orthonormal basis of $H_t$. This basis will be used to construct operators $U_t:\ell^2 \to H_t$, such that $\sigma(t) = S_t U_t$ satisfies the corollary's statement. The final steps are proving measurability of $t\mapsto \sigma(t)$ and the stochastic integral representation.

\textbf{Pointwise operators:} Define $Q_t = \Phi(\sigma^2(t))\in  \Lc(B^*, B)$ and $q_t: B^* \times B^* \to \R$ in terms of $q_t(b, c) = b(Q_t c) = (b \otimes c) \sigma^2(t)$. Then, $q_t$ is symmetric and positive semidefinite. Symmetry follows from stationarity of $\{H(t, \Fc_i)\}_{i\in\Z}$ by
\begin{align*}
    q_t(b, c) & = (b \otimes c) \sigma^2(t)
    = \sum_{i\in\Z} \ex\big[b\big(H(t, \Fc_i)\big) c\big(H(t, \Fc_0) \big)\big] \\
    & = \sum_{i\in\Z} \ex\big[c\big(H(t, \Fc_{-i})\big) b\big(H(t, \Fc_0) \big)\big]
    = (c \otimes b) \sigma^2(t) = q_t(c, b)
\end{align*}
where we used that the expectation commutes with bounded linear operators \citep[see, e.\,g., Remark 6.21 in][]{janson2015}. Positive semidefiniteness follows by usual arguments for real-valued stationary time series. More specifically, 
\begin{align*}
    q_t(b, b) = \sum_{i\in\Z} \ex\big[b\big(H(t, \Fc_i)\big) b\big(H(t, \Fc_0) \big)\big]
    = \lim_{N\to\infty} \frac{1}{2N+1} \var\bigg(\sum_{i=-N}^{N} b\big(H(t, \Fc_i)\big)\bigg),
\end{align*}
where the sequence on the right-hand side is non-negative for any $N\in\N$, so that $q_x(b, b) \ge 0$ for any $b\in B^*$.

For each $t\in[0, 1]$, define $N_t = \{b\in B^*: q_t(b, b) = 0\}$ and let $H_t$ be the completion of $B^* / N_t$ with respect to the inner product $\langle [b]_t, [c]_t\rangle_{H_t} = q_t(b, c)$. Let $A_t : B^* \to H_t$ denote the canonical projection $A_tb = [b]_t$. On $A_t(B^*) \subset H_t$, define the map $S_t^0(A_tb) = Q_tb$. Note that, if $A_t {b} = 0$, then $q_t(b, b) = 0$, and by the Cauchy-Schwarz inequality, $q_t(c, b) = 0$ for all $c \in B^*$. Thus, $c(Q_t b) = q_t(c, b) = 0$, for all $c\in B^*$, such that $Q_t b = 0$. This shows, that $S_t^0$ is well-defined. 
Moreover, for $b\in B^*$,
\begin{equation*}
	\| S_t^0(A_t b) \|_B 
	= \| Q_t b\|_B
	= \sup_{\|c\|_{B^*} \le 1} |c(Q_t b)| 
	= \sup_{\|c\|_{B^*} \le 1} |q_t(c, b)| 
	\le \sup_{\|c\|_{B^*} \le 1} q_t^{1/2}(c, c) q_t^{1/2}(b, b).
\end{equation*}
By definition of the dual norm on $B^*$ and the operator norm, 
\begin{equation*}
	q_t(c, c) = c (Q_t c) \le \| c\|_{B^*} \| Q_t c\|_B \le \| c\|_{B^*}^2 \|Q_t\|_{op},
\end{equation*}
which implies that
\begin{equation*}
	\| S_t^0(A_t b) \|_B  \le  \|Q_t\|_{op}^{1/2} q_t^{1/2}(b, b) = \|Q_t\|_{op}^{1/2} \| A_t b\|_{H_t}.
\end{equation*}
Hence $S_t^0$ extends uniquely to a bounded linear operator $S_t:H_t \to B$ with $\| S_t\|_{op} \le \|Q_t\|_{op}^{1/2}$. For $b, c\in B^*$,
\begin{equation*}
	c(S_t A_t b) = c(Q_t b) = q_t(c, b) = q_t(b, c) = \langle A_t b, A_t c\rangle_{H_t},
\end{equation*}
such that $S_t^* c = A_t c$. Consequently, $S_t S_t^* c = S_t A_t c = Q_t c$ for all $c\in B^*$, hence, $S_t S_t^* = Q_t$.

\textbf{Uniform separability of $H_t$:} By Lemma \ref{lem:lrv_lipschitz}, $t\mapsto \sigma^2(t)$ is continuous. The canonical identification $\Phi: B \hat{\otimes} B \to \Lc(B^*, B)$ is continuous with $\| \Phi(x)\|_{op} \le \|x\|_\pi$ for $x \in B \hat{\otimes} B$, by Proposition \ref{prop:norm_bound}. Therefore, 
\begin{equation*}
	\| Q_t - Q_s \|_{op} = \| \Phi(\sigma^2(t) - \sigma^2(s)) \|_{op} \le \|\sigma^2(t) - \sigma^2(s)\|_\pi,
\end{equation*}
so $t\mapsto Q_t$ is continuous with respect to the operator norm. $\Phi$ is continuous, maps finite tensors to finite-rank operators and $\sigma^2(t) \in B\hat{\otimes} B$ is the projective-norm limit of finite tensors. Since finite-rank operators are compact and the operator-norm limit of compact operators is compact, each $Q_t$ is compact.

Define $K_0 = \{Q_t b: t\in[0, 1], \|b\|_{B^*}\le 1\}$, and $K = \overline{K_0}$ its closure in $B$. The set $K_0$ is relatively compact. Indeed, fix $\eps > 0$. Since $[0, 1]$ is compact, by continuity, $\{Q_t: t\in[0, 1]\}$ is compact with respect to the operator norm. Hence, there exist $t_1, \dots, t_N$ such that for every $t$, it exists $t_r$ with $\|Q_t - Q_{t_r}\|_{op} < \eps /2$. For each $t_r$, compactness of $Q_{t_r}$ implies that $Q_{t_r}(B_1^*)$ has a finite $\eps/2$-net in $B$. Combining these finitely many nets gives an $\eps$-net for $K_0$. Thus, $K_0$ is totally bounded, and since $B$ is complete, its closure $K$ is compact.

For each $m\in\N$, consider the set of restrictions $\Rc_m = \{b|_K: b\in B^*, \|b\|_{B^*}\le m\} \subset C(K)$. Since $K$ is a compact metric space, $C(K)$ is separable. Choose a countable set $D_m \subset m B_1^*$ whose restrictions to $K$ are dense in $\Rc_m$. Set
\begin{equation*}
	D = \bigcup_{m=1}^\infty D_m,
\end{equation*}
and enumerate $D = \{d_1, d_2, \dots\}$. We will show that $A_t(D)$ is dense in $H_t$ for every $t$. 

Fix $b\in B^*$. There exists $m\in\N$ such that $\|b\|_{B^*}\le m$. By construction of $D_m$, there exists a sequence $(b_n)_{n\in\N} \subset D_m$, such that $\sup_{x\in K} |(b-b_n)(x)| \to 0$. Define the remainder $r_n = b - b_n$. Then, $\|r_n\|_{B^*} \le 2m$, and further, $\|A_t r_n\|_{H_t}^2 = q_t(r_n, r_n) = r_n(Q_t r_n)$. If $r_n \neq 0$, $Q_t(r_n / \|r_n\|_{B^*}) \in K$, hence
\begin{equation*}
	0 \le r_n(Q_t r_n) = \|r_n\|_{B^*} r_n\Big(Q_t \frac{r_n}{\|r_n\|_{B^*}}\Big) \le 2m \sup_{x\in K}|r_n(x)|.
\end{equation*}
The right-hand side tends to $0$, hence $A_t b_n \to A_t b$ in $H_t$. Since $A_t(B^*)$ is dense in $H_t$ by definition, $A_t(D)$ is dense in $H_t$. Importantly, $D$ is countable and independent of $t$.

\textbf{Measurable Gram-Schmidt:} Define $v_n(t) = A_t d_n \in H_t$. For $i, j\in \N$, 
\begin{equation*}
	\langle v_i(t), v_j(t) \rangle_{H_t} = q_t(d_i, d_j) = d_i(Q_t d_j),
\end{equation*}
hence, each scalar function $t\mapsto \langle v_i(t), v_j(t) \rangle_{H_t}$ is continuous, and in particular Borel measurable, by operator-norm continuity of $Q_t$. We recursively apply Gram-Schmidt. 

Set $w_1(t) = v_1(t)$, and define $u_1(t) = w_1(t) / \|w_1(t)\|_{H_t}$, whenever the denominator is positive, and $u_1(t) = 0$ otherwise. Suppose, we have $u_1(t), \dots, u_{n-1}(t)$, and each $u_k(t)$ is of the form $u_k(t) = \sum_{j=1}^{k} a_{k,j}(t) v_j(t)$, with Borel measurable coefficients $a_{k, j}:[0, 1]\to \R$. Define 
\begin{equation*}
	w_n(t) = v_n(t) - \sum_{k=1}^{n-1} \langle v_n(t), u_k(t) \rangle_{H_t} u_k(t),
\end{equation*}
where the coefficients 
\begin{equation*}
	\langle v_n(t), u_k(t) \rangle_{H_t} = \sum_{j=1}^k a_{k,j}(t) \langle v_n(t), v_j(t) \rangle_{H_t} 
\end{equation*}
are Borel measurable as scalar functions of $t$. Therefore, $w_n(t)$ has a representation
\begin{equation*}
	w_n(t) = \sum_{j=1}^n \beta_{n,j}(t) v_j(t)
\end{equation*}
with Borel measurable coefficients $\beta_{n,j}$. Finally, the squared norm
\begin{equation*}
	\| w_n(t)\|_{H_t}^2 = \sum_{i, j=1}^{n} \beta_{n,i}(t) \beta_{n,j}(t) \langle v_i(t), v_j(t)\rangle_{H_t}
\end{equation*}
is Borel measurable. As for $n=1$, define $u_n(t) = w_n / \|w_n(t)\|_{H_t}$, whenever the denominator is positive, and $u_n(t) = 0$ otherwise. Since $A_t(D) = \{v_n(t): n\in\N\}$ is dense in $H_t$, for every $t\in[0, 1]$, the non-zero vectors among $(u_n(t))_{n\in\N}$ form an orthonormal basis of $H_t$. This is effectively the standard Gram-Schmidt process, with the additional property that all components are Borel measurable.

\textbf{A unique Hilbert space $H$:} Let $H = \ell^2$ with canonical orthonormal basis $(e_n)_{n\in\N}$, define $U_t: \ell^2 \to H_t$ by $U_t e_n = u_n(t)$, and extend linearly and continuously. Since the non-zero $u_n(t)$ are orthonormal, 
\begin{equation*}
	\| U_t h\|_{H_t}^2 = \sum_{n: u_n(t) \neq 0} |h_n|^2 \le \sum_{n=1}^{\infty} |h_n|^2 = \| h\|_{\ell^2}^2,
\end{equation*}
such that $U_t$ is a contraction. The adjoint $U_t^*: H_t \to \ell^2$ is given by $U_t^* x = (\langle x, u_n(t)\rangle_{H_t})_{n\in\N}$. By Parseval's identity,
\begin{equation*}
	U_t U_t^* x = \sum_{n=1}^{\infty} \langle x, u_n(t)\rangle_{H_t} u_n(t) = x,
\end{equation*}
for all $x\in H_t$, such that $U_t U_t^* = I_{H_t}$. In particular, $\|U_t\|_{op} = 1$, if $H_t\neq \{0\}$, and $\|U_t\|_{op} = 0$ otherwise.

Define $\sigma(t) = S_t U_t:\ell^2 \to B$ with $\|\sigma(t)\|_{op} \le \|S_t\|_{op} \|U_t\|_{op} \le \|Q_t\|^{1/2}_{op}$, where $\|\cdot\|_{op}$ denotes the operator norm of the respective spaces. By continuity of $Q_t$ and compactness of $[0, 1]$, $\Sigma := \sup_{t\in[0, 1]}\|\sigma(t)\|_{op} <\infty$. Moreover, for $b\in B^*$,
\begin{equation*}
	\sigma(t) \sigma^*(t) b= S_t U_t U_t^* S_t^* b = S_t S_t^* b = Q_t b.
\end{equation*}
Hence, $\sigma(t) \sigma^*(t) = Q_t = \Phi(\sigma^2(t))$.

\textbf{Measurability of $t\mapsto \sigma(t) h$:} For each $n$, using the finite coefficient representation of $u_n(t)$,
\begin{equation*}
	\sigma(t) e_n = S_t u_n(t) = \sum_{j=1}^n a_{n, j} (t) S_t A_t d_j = \sum_{j=1}^n a_{n, j}(t) Q_t d_j.
\end{equation*}
Since $t\mapsto Q_t$ is continuous with respect to the operator norm, $t\mapsto Q_t d_j$ is continuous as a $B$-valued map. Each coefficient $a_{n, j}(t)$ is Borel measurable, hence, $t\mapsto \sigma(t) e_n$ is strongly measurable as a $B$-valued map. 
For general $(h_n)_{n\in\N} \in \ell^2$, let $h^{(N)} = \sum_{n=1}^{N} h_n e_n$. Then 
\begin{equation*}
	t \mapsto \sigma(t) h^{(N)} = \sum_{n=1}^{N} h_n \sigma(t) e_n
\end{equation*}
is strongly measurable. Moreover, 
\begin{equation*}
	\sup_{t\in[0, 1]} \| \sigma(t) h - \sigma(t) h^{(N)} \|_B \le \Sigma \|h - h^{(N)}\|_{\ell^2} \xrightarrow{N \to\infty} 0. 
\end{equation*}
Thus, $t\mapsto \sigma(t) h$ is the uniform limit of strongly measurable $B$-valued maps, and as such strongly measurable. 

Similarly, for $b\in B^*$, $(\sigma^*(t) b)_n = \langle \sigma^*(t)b, e_n\rangle_{\ell^2} = b(\sigma(t) e_n)$ is Borel measurable for each $n$. Since $\ell^2$ is separable, coordinate measurability implies strong measurability of $t \mapsto \sigma^*(t) b$. Furthermore, 
\begin{equation*}
	\| \sigma^*(t) b \|_{\ell^2}^2 
	= \langle \sigma^*(t) b, \sigma^*(t) b\rangle_{\ell^2} 
	= b(\sigma(t)\sigma^*(t)b) = b(Q_t b) = q_t(b, b) \le \|Q_t\|_{op} \|b\|_{B^*}^2.
\end{equation*}
Since $t\mapsto Q_t$ is bounded on $[0, 1]$, $t\mapsto \sigma^*(t) b$ belongs to $L^2([0, 1], \ell^2)$ for every $b\in B^*$. 

% $B$ and $H$ are separable, so the maps are Bochner measurable by Pettis measurability theorem \citep[see, e.\,g., Theorem 1.1.6 in][]{hytonen2016}.

\textbf{Stochastic Integral Representation:}
For $b \in B^*$,
\begin{align*}
	\int_0^1 \| \sigma^*(t) b\|_{\ell^2}^2 \diff t
	= \int_0^1 b(Q_t b) \diff t
	= (b \otimes b) \int_0^1 \sigma^2(t) \diff t
	= b (Rb),
\end{align*}
where the operator $R = \Phi(\int_0^1 \sigma^2(t)\diff t) = \int_0^1 Q_t \diff t$ is the covariance operator of the $B$-valued Gaussian random variable $G(1)$, by Theorem \ref{thm:fclt}. By Theorem 4.2 of \cite{vanNeerven2005}, $\sigma$ is stochastically integrable with respect to a cylindrical Brownian motion $W$ on $\ell^2$.

The same applies on every interval $[0, t]$, because $\int_0^t Q_u \diff u = \Phi(\int_0^t \sigma^2(u) \diff u)$ is the covariance operator of the Gaussian random variable $G(t)$ from Theorem \ref{thm:fclt}. Define $\tilde{G}(t) = \int_0^t \sigma(u) \diff W(u)$. Then $\tilde{G}$ is a centered Gaussian process and, for $b, c\in B^*$,
\begin{equation*}
	\cov\big(b(\tilde{G}(t)), c(\tilde{G}(s))\big) = \int_0^{t \wedge s} \langle \sigma^*(u) b, \sigma^*(u) c\rangle_{\ell^2} \diff u = \int_0^{t\wedge s} (b\otimes c)\sigma^2(u) \diff u,
\end{equation*}
since $\sigma(u)\sigma^*(u) = Q_u = \Phi(\sigma^2(u))$, which is exactly the scalar covariance of the Gaussian limit $G$. Since $G$ and $\tilde{G}$ are both centered $B$-valued Gaussian processes and all scalar covariance functions agree, their finite-dimensional distributions coincide, and 
\begin{equation*}
	\big(G(t_1),\dots, G(t_p)\big) \stackrel{\Dc}{=} \bigg(\int_0^{t_1} \sigma(u) \diff W(u), \dots, \int_0^{t_p} \sigma(u) \diff W(u)\bigg),
\end{equation*}
for all $p\in \N$ and $t_1,\dots, t_p\in[0, 1]$, and $\Phi(\sigma^2(t)) = \sigma(t) \sigma^*(t)$. By Proposition \ref{prop:measure_equality}, the laws coincide.

\subsection{Proof of Corollary \ref{cor:test_properties}}

First, note that $b\big(G(t)\big)$ is a centered Gaussian process with covariance 
\begin{equation*}
    \cov\big(b(G(t)), b(G(s))\big) = (b \otimes b) \int_0^{t \wedge s} \sigma^2(x) \diff x = t \wedge s (b\otimes b) \sigma^2.
\end{equation*}
Since $(b\otimes b) \sigma^2$ is constant and strictly positive, $b\big(G(t)\big)$ has the same distribution as $\sqrt{(b\otimes b) \sigma^2} W(t)$, for a standard Brownian motion $W$. Under the null hypothesis,
\begin{equation*}
    D_n(t) = b\big(G_n(t) - t G_n(1)\big) \convw b\big(G(t) - t G(1)\big)  \stackrel{\Dc}{=} \sqrt{(b\otimes b) \sigma^2} \big(W(t) - t W(1)\big),
\end{equation*}
by Theorem \ref{thm:fclt}. Similarly, 
\begin{align*}
    V_n^2(t) & = \frac{1}{n}\sum_{i=1}^{\lfloor nt\rfloor} \big[b\big( G_n(\tfrac{i}{n}) - \tfrac{i}{\lfloor nt\rfloor} G_n(t) \big)\big]^2 + \frac{1}{n}\sum_{i=\lfloor nt\rfloor + 1}^{n} \Big[b\Big( G_n(1) - G_n(\tfrac{i}{n}) - \tfrac{n-i}{n-\lfloor nt\rfloor} \big(G_n(1) - G_n(t)\big) \Big)\Big]^2 \\
    & \convw V^2(t):=(b\otimes b)\sigma^2  \bigg(\int_0^t \big(W(x) - \tfrac{x}{t}W(t)\big)^2 \diff x + \int_t^1 \Big(W(1) - W(x) - \tfrac{1-x}{1-t}[W(1) - W(t)]\Big)^2 \diff x \bigg).
\end{align*}
Hence, under the null hypothesis, $\mathbf{T}_n$ converges weakly to the limiting distribution specified in \eqref{eq:test_limit}.
Under the alternative, $V_n^2(\theta)$ converges weakly to $V^2(\theta)$ as before. Though, for the numerator, it holds
\begin{equation*}
    D_n(\theta) = b\big(G_n(\theta) - \theta G_n(1)\big) + \sqrt{n} \ex[T_n(\theta) - \tfrac{\lfloor \theta n \rfloor}{n}T_n(1)].
\end{equation*}
The first term converges weakly to $b\big(G(\theta) - \theta G(1)\big)$. Explicit calculation of the expectation yields
\begin{equation*}
    \ex[T_n(\theta) - \tfrac{\lfloor \theta n \rfloor}{n}T_n(1)]
    = \tfrac{\lfloor \theta n \rfloor}{n} b(\mu) - \tfrac{\lfloor \theta n \rfloor}{n} b(\mu) - \tfrac{\lfloor \theta n \rfloor}{n} \cdot \tfrac{n - \lfloor \theta n \rfloor}{n}b(\delta) = \theta(1-\theta)b(\delta) + \Oc(n^{-1}).
\end{equation*}
In particular, $D_n(\theta) = \sqrt{n} \theta(1-\theta)b(\delta) + \Oc_\pr(1)$,
such that 
\begin{equation*}
    \mathbf{T}_n \ge \frac{|D_n(\theta_n)|}{V_n(\theta_n)} = \frac{|D_n(\theta)|}{V_n(\theta)} \xrightarrow{n\to\infty} \infty
\end{equation*}
for $\theta_n = \lfloor \theta n \rfloor / n$, under the alternative.

%% file: app_proofs.tex
% !TeX spellcheck = en_US
%% ====================
\section{Proof of Example \ref{ex:continuous}} \label{app:proof_example}
%% ====================

First, $m$-dependence follows directly from the definition, and trivially implies the required bound on the physical dependence measure. 

\noindent \textbf{Moment bound.} Fix $\kappa > 0$. Since $\|s_{\ell, j}\|_\infty = 1$, for each $j, \ell$, 
\begin{equation*}
    \| X_{i, n}\|_\infty \le \sum_{\ell=0}^\infty \sum_{j=0}^{2^\ell - 1} 2^{-(\alpha + 1)\ell} |\xi_{i,n,\ell, j}| \|s_{\ell, j}\|_\infty \le  \sum_{\ell=0}^\infty \sum_{j=0}^{2^\ell - 1} 2^{-(\alpha + 1)\ell} =  \sum_{\ell=0}^\infty 2^{-\alpha \ell} = \frac{1}{1 - 2^{-\alpha}} < \infty.
\end{equation*}
In particular, $\sup_{i, n} \ex[\|X_{i, n}\|_\infty^{2+\kappa}] < \infty$. Moreover, the event $\{\|T_k X_{i, n}\|_\infty > \tfrac{1}{4} m(n, \sigma_k, N(\sigma_k/2, \Gc_k, V_{n,k}))\}$ is empty for sufficiently large $n$, since the left-hand side is bounded and the right-hand side is of order $\sqrt{n}$ by definition.

\noindent \textbf{Entropy condition.} By definition of $T_k$,
\begin{equation}\label{Eqn:ProjectionBound}
    \| T_k X_{i, n}\|_\infty \le \sum_{\ell =k+1}^\infty 2^{-\alpha\ell} = \frac{2^{-\alpha(k+1)}}{1 - 2^{-\alpha}} =: A_k,
\end{equation}
uniformly over $1\le i\le n$ and $n\in\N$. Let $f \in \Gc_k$ with $f(x, u) = (b\circ T_k)(x) \id(u\le t)$. Then, $|f(X_{i,n}, i/n)| \le \|T_k X_{i, n}\|_\infty$, and
\begin{equation*}
    \|f\|_{2, n} \le \bigg(\frac{1}{n}\sum_{i=1}^n \ex[\|T_k X_{i,n}\|_\infty^2]\bigg)^{1/2} \le A_k.
\end{equation*}
Since $\{(X_{i,n})_{1\le i\le n}\}_{n\in\N}$ is $m$-dependent, $V_{n,k}(f)\le (m+1)\|f\|_{2, n}$, and further
\begin{equation*}
    \sigma_k = \sup_{f\in\Gc_k} V_{n,k}(f) \le (m+1) \sup_{f\in\Gc_k} \|f\|_{2, n} \le (m+1) A_k \xrightarrow{k\to\infty}0
\end{equation*}

By definition, $X_{i, n}\in \Kc = \{ \sum_{\ell=0}^\infty \sum_{j=0}^{2^\ell - 1} 2^{-(\alpha + 1)\ell} \theta_{\ell, j} s_{\ell, j}: |\theta_{\ell, j}|\le 1\}$, and $\Kc$ is compact in $C([0, 1])$ by Arzelà-Ascoli. Since the norms $\|\cdot\|_{2,n}$ and $V_{n,k}$ are defined based on $f(X_{i, n}, i/n)$, it is sufficient to find brackets for $\Kc \times [0, 1]$.

Since the Faber-Schauder system is a Schauder basis, there exist continuous coordinate functionals $\{s_{\ell, j}^*\}\subset B^*$ such that every $x \in C([0, 1])$ has the unique expansion $x = \sum_{\ell, j} s_{\ell, j}^*(x) s_{\ell, j}$. In particular, for $x\in\Kc$ with $x = \sum_{\ell=0}^\infty \sum_{j=0}^{2^\ell - 1} 2^{-(\alpha + 1)\ell} \theta_{\ell, j} s_{\ell, j}$, with $|\theta_{\ell, j}|\le 1$, it holds $s_{\ell, j}^*(x) = 2^{-(\alpha + 1)\ell} \theta_{\ell, j}$.

Fix $\eps > 0$, and choose the cutoff level $L(\eps) = \max\{\lceil \alpha^{-1}\log_2(1/\eps)\rceil, k+1\}$, such that $\sum_{\ell=L(\eps)+1}^\infty 2^{-\alpha \ell} \le \eps/6$. For each level $\ell\in\{k+1, \dots, L(\eps)\}$, define the mesh size $\eta_\ell = 2^{-(1-\alpha)\ell}\eps/6$, and use a grid of mesh size $\eta_\ell$ for each coordinate $u_{\ell, j}$, $\ell\in\{k+1, \dots, L(\eps)\}, 0\le j< 2^\ell$. Let $U_{\Kc, \eps}$ denote the subset of $[-1, 1]^{\{(\ell, j): 0\le j < 2^\ell, k+1 \le \ell \le L(\eps)\}}$ consisting of the product of these meshes. 

For $t \in [0, 1]$ and $b\in B_1^*$, denote $f_{b,t}(x, u) = b(T_k x) \id(u \le t)$. For $t_0, t_1\in [0, 1]$, 
\begin{align} \label{eq:time_discretization}
    \| f_{b,t_0} - f_{b,t_1} \|_{2,n}^2
    & = \frac{1}{n} \sum_{i=1}^n \ex\big[\big(b(T_k X_{i, n})\big)^2\big(\id(i/n\le t_0) - \id(i/n\le t_1\big)^2\big] \\
    & \le A_k^2 \frac{\#\{i: i/n\in (t_0\wedge t_1, t_0\vee t_1]\}}{n}
    \le A_k^2 (|t_0 - t_1| + 1/n). \notag
\end{align}
For the time-discretization, let $\Tc_\eps$ denote a grid with mesh size $\eta^t := \min\{1, \eps^2/(72 A_k^2)\}$. For sufficiently large $n$, $n \ge 72 A_k^2 / \eps^2$, yielding a discretization error bounded by $\eps / 6$.

For each grid point $(\tilde{b}, \tilde{t})\in U_{\Kc, \eps} \times \Tc_\eps$, define $\tilde{f}_{\tilde{b}, \tilde{t}}: \Kc \times [0, 1] \to \R$ by
\begin{equation*}
    \tilde{f}_{\tilde{b}, \tilde{t}}(x, u) = \bigg(\sum_{\ell=k+1}^{L(\eps)} \sum_{j=0}^{2^\ell -1} s_{\ell, j}^*(x) \tilde{b}_{\ell, j} \bigg) \id(u \le \tilde{t}).
\end{equation*}
Based on $\tilde{f}_{\tilde{b}, \tilde{t}}$, define the bracket endpoints $l_{\tilde{b}, \tilde{t}} = \tilde{f}_{\tilde{b}, \tilde{t}} - \eps/2$ and $u_{\tilde{b}, \tilde{t}} = \tilde{f}_{\tilde{b}, \tilde{t}} + \eps/2$. The endpoints are measurable, but not necessarily in $\Gc_k$. 

Each pair $(b, t)\in B_1^*\times [0, 1]$ corresponds to a function $f_{b,t}\in \Gc_k$. Define the coefficients $u_{\ell, j}(b) = b(s_{n,j})$ with $|u_{\ell, j}(b)| = |b(s_{n,j})|\le \|b\|_{B^*} \|s_{n,j}\|_\infty\le 1$, for $\ell\in\{k+1, \dots, L(\eps)\}, 0\le j< 2^\ell$. Choose $\tilde{b} \in U_{\Kc,\eps}$ by rounding each coordinate $u_{\ell, j}(b)$ to the mesh, such that $|u_{\ell, j}(b) - \tilde{b}_{\ell, j}| \le \eta_\ell$, and $\tilde{t}$ such that $\id(u \le t)$ is bracketed by $\id(u \le t_0)$ up to error $\eps/6$ in $\|\cdot\|_{2, n}$. 

Then, for every $(x, u) \in \Kc\times [0, 1]$,
\begin{align*}
    |f(x, u) - \tilde{f}_{\tilde{b}, \tilde{t}}(x, u)|
    & = \bigg| \bigg(\sum_{\ell=k+1}^\infty \sum_{j=0}^{2^\ell - 1} 2^{-(\alpha + 1)\ell} \theta_{\ell, j} u_{\ell, j}(b) \bigg) \id(u \le t) - \bigg(\sum_{\ell=k+1}^{L(\eps)} \sum_{j=0}^{2^\ell -1} s_{\ell, j}^*(x) \tilde{b}_{\ell, j} \bigg) \id(u \le \tilde{t}) \bigg| \\
    & = \bigg| \bigg(\sum_{\ell=k+1}^\infty \sum_{j=0}^{2^\ell - 1} 2^{-(\alpha + 1)\ell} \theta_{\ell, j} u_{\ell, j}(b) \bigg) \id(u \le t) - \bigg(\sum_{\ell=k+1}^{L(\eps)} \sum_{j=0}^{2^\ell -1} 2^{-(\alpha + 1)\ell} \theta_{\ell, j} \tilde{b}_{\ell, j} \bigg) \id(u \le \tilde{t}) \bigg| \\
    & \le \frac{\eps}{6} + \bigg| \bigg(\sum_{\ell=k+1}^\infty \sum_{j=0}^{2^\ell - 1} 2^{-(\alpha + 1)\ell} \theta_{\ell, j} \big(u_{\ell, j}(b) -\tilde{b}_{\ell, j}\big)\bigg) \bigg| + C | \id(u \le t) - \id(u \le \tilde{t})| \\
    & \le \frac{\eps}{6} + \sum_{\ell=k+1}^\infty 2^{-\alpha\ell} \eta_\ell  + C | \id(u \le t) - \id(u \le \tilde{t})| \\
    & \le \frac{\eps}{3} + C | \id(u \le t) - \id(u \le \tilde{t})|
\end{align*}
where $|\theta_{\ell, j}|\le 1$ denote the coordinates of $x\in\Kc$. Thus, $\| f - \tilde{f}_{\tilde{b}, \tilde{t}}\|_{2, n} \le \eps/2$ by \eqref{eq:time_discretization}.

Hence, we can use the elements of $U_{\Kc, \eps} \times \Tc_\eps$ to construct brackets that cover $\Gc_k$. In particular, 
\begin{align*}
    \log\big(N(\eps, \Gc_k, V_{n,k})\big) 
    & \le \bigg(\sum_{\ell = k+1}^{L(\eps)} \sum_{j=0}^{2^\ell -1} \log(2/\eta_\ell)\bigg) + 2 \log(A_k/\eps) \\
    % = \bigg(\sum_{\ell = k+1}^{L(\eps)} 2^\ell \log(6 \cdot 2^{(1-\alpha)\ell + 1}/\eps)\bigg) + 2 \log(A_k/\eps)
    & = \bigg(\sum_{\ell = k+1}^{L(\eps)} 2^\ell \big([(1-\alpha)\ell + 1]\log(2) + \log(6/\eps)\bigg) + 2 \log(A_k/\eps)  \\
    & \le C 2^{L(\eps)} \log(1/\eps)
    \le C \eps^{-1/\alpha} \log(1/\eps),
\end{align*}
for some constant $C > 0$, such that the entropy integral exists and indeed converges to $0$ as $k\to\infty$.

%% ====================
\section{Proof of the Example from Section \ref{sec:pde_example}} \label{app:proof_example_pde}
%% ====================

The solution to the problem in \eqref{Eqn:WeakProblem1D} is given by
\begin{equation}\label{Eqn:PDESolu}
    \begin{aligned}
        u_{i,t}(x)=&\int_{[0,x]}\Phi\left(C_{i}-a(t)\,\sum_{k=0}^{\infty}\lambda^{k}\,2^{\frac{\ell_{k}\,(p-2)}{p}}\,\eta_{i-k}\,e_{k}^{\prime}(s)\right)\,\diff s,
        \\
        &\int_{[0,1]}\Phi\left(C_{i}-a(t)\,\sum_{k=0}^{\infty}\lambda^{k}\,2^{\frac{\ell_{k}\,(p-2)}{p}}\,\eta_{i-k}\,e_{k}^{\prime}(s)\right)\,\diff s=0,
    \end{aligned}
\end{equation}
where $\Phi(a)= \sgn(a)\,|a|^{\frac{1}{p-1}}=|a|^{\frac{2-p}{p-1}}\,a$ and $\ell_{k}=\max(\{\ell\in\N:2^{\ell}\leq k+1\})$.

In the following, we prove that this model satisfies Assumption \ref{assump:entropy}. The conditions in Assumption \ref{assump:local_stationarity} follow analogously to Example \ref{ex:continuous}.

Take $J_{\ell}=2^{\ell+1}-1$ and define $F_{i,n}^{[\ell]}=a(t)\,\sum_{k=0}^{J_{\ell}-1}\lambda^{k}\,\eta_{i-k}\,e_{k}^{\ast}$. Since $F_{i,n}^{[\ell]}$ is obtained from truncating $F_{i,n}$, we immedately obtain
\[
    \big\|F_{i,n}^{[\ell]}-F_{i,n}\big\|_{B^{\ast}}\leq\frac{M\,|\lambda|^{J_{\ell}}}{1-|\lambda|}.
\]
At this point we will use the properties of the model to obtain a similar bound for $\|X_{i,n}^{[\ell]}-X_{i,n}\|_{B}$, where 
$X_{i,n}^{[\ell]}$ is the solution obtained from $F_{i,n}^{[\ell]}$. Take $v_{1},v_{2}\in B$. Then, since $-\Delta_{p}$ is bounded, 
\begin{equation*}
    \left\langle\left(-\Delta_{p}\right)\,v_{1}-\left(-\Delta_{p}\right)\,v_{2},v_{1}-v_{2}\right\rangle_{B^{\ast}\times B}\leq\left\|\left(-\Delta_{p}\right)\,v_{1}-\left(-\Delta_{p}\right)\,v_{2}\right\|_{B^{\ast}}\,\|v_{1}-v_{2}\|_{B}.
\end{equation*}
Now we work on the left hand side to obtain a lower bound. By the mean value theorem
\[
    \begin{aligned}
        & \left\langle\left(-\Delta_{p}\right)\,v_{1}-\left(-\Delta_{p}\right)\,v_{2},v_{1}-v_{2}\right\rangle_{B^{\ast}\times B} \\
        &=\int_{[0,1]}\left(\left|v_{1}^{\prime}(x)\right|^{p-2}\,v_{1}^{\prime}(x)-\left|v_{2}^{\prime}(x)\right|^{p-2}\,v_{2}^{\prime}(x)\right)\,\left(v_{1}^{\prime}(x)-v_{2}^{\prime}(x)\right)\,\diff x
        \\
        &\geq(p-1)\,\int_{[0,1]}\left(\left|v_{1}^{\prime}(x)\right|+\left|v_{2}^{\prime}(x)\right|^{p-2}\,\right)\,\left(v_{1}^{\prime}(x)-v_{2}^{\prime}(x)\right)^{2}\,\diff x.
    \end{aligned}
\]
By H\"{o}lder's inequality with exponents $2/p$ and $2/(2-p)$,
\[
    \begin{aligned}
        &\left(\int_{[0,1]}\left|v_{1}^{\prime}(x)-v_{2}^{\prime}(x)\right|^{p}\,\diff x\right)^{\frac{2}{p}}
        \\
        &\quad\leq\left(\int_{[0,1]}\left(\left|v_{1}^{\prime}(x)\right|+\left|v_{1}^{\prime}(x)\right|^{p-2}\,\right)\,\left(v_{1}^{\prime}(x)-v_{2}^{\prime}(x)\right)^{2}\,\diff x\right)\,\left(\int_{[0,1]}\left(\left|v_{1}^{\prime}(x)\right|+\left|v_{2}^{\prime}(x)\right|^{p}\right)\,\diff x\right)^{\frac{2}{2-p}}.
    \end{aligned}
\]
Chaining both inequalities and using the inverse triangle inequality, we conclude that
\[
    \left\langle\left(-\Delta_{p}\right)\,v_{1}-\left(-\Delta_{p}\right)\,v_{2},v_{1}-v_{2}\right\rangle_{B^{\ast}\times B}\geq(p-1)\,\left\|v_{1}-v_{2}\right\|_{B}^{2}\,\left(\left\|v_{1}\right\|_{B}+\left\|v_{2}\right\|_{B}\right)^{p-2},
\]
So,
\[
    \frac{\left(\left\|v_{1}\right\|_{B}+\left\|v_{2}\right\|_{B}\right)^{2-p}}{(p-1)}\,\left\|\left(-\Delta_{p}\right)\,v_{1}-\left(-\Delta_{p}\right)\,v_{2}\right\|_{B^{\ast}}\geq\left\|v_{1}-v_{2}\right\|_{B}.
\]
By definition of $X_{i,n}^{[\ell]}$ and $X_{i,n}$,
\[
    \big\|X_{i,n}^{[\ell]}-X_{i,n}\big\|_{B}\leq\frac{\big(\big\|X_{i,n}^{[\ell]}\big\|_{B}+\|X_{i,n}\|_{B}\big)^{2-p}}{(p-1)}\,\big\|F_{i,n}^{[\ell]}-F_{i,n}\big\|_{B^{\ast}}
\]
From \eqref{Eqn:WeakProblem1D} and \eqref{Eqn:PDESolu}, differentiating $u_{i,t}$ we can prove $\|X_{i,n}^{[\ell]}\|_{B},\|X_{i,n}\|_{B}\leq(M/(1-|\lambda|))^{\frac{1}{p-1}}$. In other words, the time series and its truncation are uniformly bounded in $B$. Hence, there exists a constant $C>0$ such that
\[
    \big\|X_{i,n}^{[\ell]}-X_{i,n}\big\|_{B}\leq\frac{C\,M\,|\lambda|^{J_{\ell}}}{1-|\lambda|}.
\]
Observe now that, since $F_{i,n}^{[\ell]}\in span\left(\left\{e_{k}^{\prime}:k\leq J_{\ell}\right\}\right)$, the same holds for $X_{i,n}^{[\ell]}$ since the action of $-\Delta_{p}$ keeps this space invariant (see also \eqref{Eqn:PDESolu}). Consequently, $T_{\ell}\,X_{i,n}^{[\ell]}=0$. Furthermore, $T_{\ell}\,(X_{i,n}^{[\ell]}-X_{i,n})=T_{\ell}\,X_{i,n}$. Since $\left\{e_{k}\right\}_{k\in\N}$ is a Schauder basis, we have $\left\|T_{\ell}\,X_{i,n}\right\|_{B}\lesssim |\lambda|^{2^{\ell+1}-1}$. At this point, we have found a bound analogous to \eqref{Eqn:ProjectionBound} and Assumption \ref{assump:entropy} follows the same steps as in Example \ref{ex:continuous}.

%% ====================
\section{Proofs of Auxiliary Results from Section \ref{sec:aux_results}} \label{app:proofs}
%% ====================

\subsection{Proofs of the Results from Section \ref{sec:general_banach_results}}

\begin{proof}(of Proposition \ref{prop:measure_equality})
	Consider the set
	\begin{align*}
		\Fc = \{f: D([0, 1], B) \to \R | & f(x) = g\big(b_1(x(t_1)), \dots, b_p(x(t_p)),~g \in \Lc(\R^p, \R) \big), \\ 
		& b_1, \dots, b_p \in B^*, t_1, \dots, t_p \in [0, 1] , p\in\N \}.
	\end{align*}
	A partial order on $\Fc$ can be defined pointwise, where $f_1 \le f_2$ if and only if $f_1(x) \le f_2(x)$ for all $x \in D([0, 1], B)$. With this definition, $(\Fc, \le)$ becomes a vector lattice. Moreover, $\Fc$ contains the constant functions $f(x) = c$, for $c\in \R$. Finally, $\Fc$ separates points, that is, for any $x \neq y$, some $f\in \Fc$ exists with $f(x) \neq f(y)$. More specifically, for $x, y \in D([0, 1], B)$ with $x \neq y$, some $t\in [0, 1]$ exists so that $x(t) \neq y(t)$. By the Hahn-Banach theorem, some $b\in B^*$ exists, with $b(x(t)) \neq b(y(t))$, so that $f(x) \neq f(y)$ for $f(x):=b(x(t))$. 
	
	By Lemma 1.3.12 of \cite{vandervaart1996}, $X \stackrel{\Dc}{=} Y$, if and only if $\ex[f(X)] = \ex[f(Y)]$ for every $f \in \Fc$, which follows from \eqref{eq:prop_fdd} and the definition of $\Fc$.
\end{proof}

\begin{proof}(of Proposition \ref{prop:independence})
    Let $(\Omega, \Ac, \pr)$ denote the underlying probability space. Further, let $X_n$ and $Y_n$ be simple Bochner-measurable functions with $\lim_{n\to\infty}\|X - X_n\|_B \to 0$ and $\lim_{n\to\infty} \|Y - Y_n \|_B \to 0$, that is, $X_n = \sum_{i=1}^n x_i \id_{A_i}$ and $Y_n = \sum_{i=1}^n y_i \id_{B_i}$ for elements $x_1, \dots, x_n, y_1, \dots, y_n \in B$ and sets $A_1, \dots, A_n, B_1, \dots, B_n \in \Ac$. Then,
    \begin{equation}\label{eq:product_approximation}
        \ex[X_n \otimes Y_n] = \sum_{i, j=1}^n x_i \otimes y_i \pr(A_i \cap B_i) = \sum_{i=1}^n x_i \pr(A_i) \otimes \sum_{j=1}^n y_j \pr(B_j) = \ex[X_n] \otimes \ex[Y_n].
    \end{equation}
    By linearity and continuity of $(u, v)\mapsto u \otimes v$ with respect to the projective norm, it holds that $\ex[X_n \otimes Y_n] \to \ex[X \otimes Y], \ex[X_n] \to \ex[X]$ and $\ex[Y_n] \to \ex[Y]$. Hence, the assertion follows from \eqref{eq:product_approximation}.
\end{proof}

\begin{proof}(of Proposition \ref{prop:bound_proj_norm})
    Recall that the dual space of $B \hat{\otimes} B$ consists of all continuous, bilinear maps $B \times B \to \R$ \citep[see Theorem 4.6 of][]{janson2015}. Hence, the projective norm has the representation
    \begin{equation} \label{eq:proj_norm_representation}
        \| u \|_\pi = \sup\{ |\beta(u)|: \beta~\text{is continuous and bilinear on}~B\otimes B, \|\beta\|_{op}\le 1 \},
    \end{equation}
    for any $u\in B\hat{\otimes} B$. In particular
    \begin{equation}\label{eq:bound_projective_norm}
        \| \ex[X\otimes Y]\|_\pi 
        = \sup_{\|\beta\|_{op}\le 1} |\beta(\ex[X\otimes Y])| 
        = \sup_{\|\beta\|_{op}\le 1} |\ex[\beta(X\otimes Y)]| 
        \le \sup_{\|\beta\|_{op}\le 1} \ex[|\beta(X\otimes Y)|] 
    \end{equation}
    where the supremum is taken over all continuous and bilinear maps $\beta$. Since the operator norm is submultiplicative, it holds $|\beta(X\otimes Y)| \le \|\beta\|_{op} \|X\|_B \|Y\|_B$, so that the assertion follows from \eqref{eq:bound_projective_norm}.
\end{proof}

\begin{proof}(of Proposition \ref{prop:ottaviani})
    Let $\tilde{X}_1, \dots, \tilde{X}_n$ be an independent copies of $X_1, \dots, X_n$. The differences $Z_i = X_i - \tilde{X}_i$ are symmetric. In the following, we denote by $\ex_X$ and $\ex_{\tilde{X}}$ the expectations with respect to $X = (X_1, \dots, X_n)$ and $\tilde{X} = (\tilde{X}_1, \dots, \tilde{X}_n)$. By Proposition 1.10 in \cite{daprato2014},
    \begin{align*}
        \ex_X\bigg[ \max_{i=1}^n \bigg\| \sum_{j=1}^i X_j \bigg\|_B \bigg]
        & = \ex_X\bigg[ \max_{i=1}^n \bigg\| \sum_{j=1}^i X_j - \ex_{\tilde{X}}[\tilde{X}_j] \bigg\|_B \bigg] \\
        & \le \ex_X\bigg[ \max_{i=1}^n \ex_{\tilde{X}}\bigg[\bigg\| \sum_{j=1}^i X_j - \tilde{X}_j \bigg\|_B\bigg] \bigg] \\
        & \le \ex\bigg[ \max_{i=1}^n \bigg\| \sum_{j=1}^i X_j - \tilde{X}_j \bigg\|_B\bigg] \\
        & = \ex\bigg[ \max_{i=1}^n \bigg\| \sum_{j=1}^i Z_j \bigg\|_B\bigg].
    \end{align*}
    Further, by Levy's inequality \citep[see, e.\,g., Proposition 2.3 in][]{ledoux2013}, it holds
    \begin{align*}
        \ex\bigg[ \max_{i=1}^n \bigg\| \sum_{j=1}^i Z_j \bigg\|_B\bigg]
        & = \int_0^\infty \pr\bigg( \max_{i=1}^n \bigg\| \sum_{j=1}^i Z_j \bigg\|_B > t\bigg) \diff t \\
        & \le \int_0^\infty 2 \pr\bigg( \bigg\| \sum_{i=1}^n Z_i \bigg\|_B > t\bigg) \diff t 
        = 2 \ex\bigg[ \bigg\| \sum_{i=1}^n Z_i \bigg\|_B \bigg].
    \end{align*}
    Now, by the triangle inequality and Lemma 2.3.1 in \cite{vandervaart1996}, it finally holds
    \begin{equation*}
        2 \ex\bigg[ \bigg\| \sum_{i=1}^n Z_i \bigg\|_B \bigg]
        \le 2 \ex\bigg[ \bigg\| \sum_{i=1}^n X_i \bigg\|_B \bigg] + 2 \ex\bigg[ \bigg\| \sum_{i=1}^n \tilde{X}_i \bigg\|_B \bigg]
        = 4 \ex\bigg[ \bigg\| \sum_{i=1}^n X_i \bigg\|_B \bigg]
        \le 8 \ex\bigg[ \bigg\| \sum_{i=1}^n R_i X_i \bigg\|_B \bigg].
    \end{equation*}
\end{proof}

\begin{proof}(of Proposition \ref{prop:lipschitz_finite_rank})
    Let $L$ denote the Lipschitz constant of $x$ and fix $\eps >0$. Without loss of generality, assume that $L \ge 1$. Let $0 = t_0 < t_1 < \dots < t_m = 1$ be a partition with $\Delta = \max_{k=1}^m t_{k} - t_{k-1}$, such that $\Delta \le \frac{\eps}{2 L}$. For each $t_k$ select $x_k \in B\otimes B$, such that $\|x(t_k) - x_k\|_\pi \le \frac{\eps}{2}$. Define $\tilde{x}(t)$ by linear interpolation, i.\,e., define
    \begin{equation*}
        \tilde{x}(t) = \frac{t - t_{k-1}}{t_k - t_{k-1}} x_{k-1} + \frac{t_k - t}{t_k - t_{k-1}} x_k, 
    \end{equation*}
    for $k=1, \dots, m$. By the triangle inequality,
    \begin{align*}
        & \| x(t) - \tilde{x}(t) \|_\pi \\
        & = \bigg\| \frac{t - t_{k-1}}{t_k - t_{k-1}} [x(t) - x(t_{k-1}) + x(t_{k-1}) - x_{k-1}] + \frac{t_k - t}{t_k - t_{k-1}} [x(t) - x(t_k) + x(t_k) - x_k] \bigg\|_\pi \\
        & \le \frac{t - t_{k-1}}{t_k - t_{k-1}} [\| x(t) - x(t_{k-1}) \|_\pi + \|x(t_{k-1}) - x_{k-1}\|_\pi] \\ 
        & \quad +  \frac{t_k - t}{t_k - t_{k-1}} [\|x(t) - x(t_k)\|_\pi + \|x(t_k) - x_k\|_\pi].
    \end{align*}
    Due to Lipschitz continuity of $x$, and the bound $\|x(t_k) - x_k\|_\pi \le \frac{\eps}{2}$, the right-hand side can be bounded by $ L \Delta + \frac{\eps}{2} \le \eps$. Since this bound does not depend on $t$ or $k$, we have $\sup_{t \in [0, 1]} \| x(t) - \tilde{x}(t) \|_\pi \le \eps$. 

    It remains open to prove that $\tilde{x}(t)$ is Lipschitz continuous. First, observe that
    \begin{equation*}
        \| x_k - x_{k-1}\|_\pi 
        \le \| x_k - x(t_k) \|_\pi + \| x(t_k) - x(t_{k-1}) \|_\pi +  \| x(t_{k-1}) - x(t_{k-1}) \|_\pi
        \le \frac{\eps}{2} + L \Delta + \frac{\eps}{2} \le \frac{3}{2}\eps.
    \end{equation*}
    For any $s, t\in [t_{k-1}, t_k]$ with $k \in \{1, \dots, m\}$,
    \begin{align*}
        | \tilde{x}(t) - \tilde{x}(s) |
        \le \frac{\| x_k - x_{k-1}\|_\pi}{|t_k - t_{k-1}|} |t-s|
        \le \frac{3 \eps}{2 \min_{k=1, \dots, m}|t_k - t_{k-1}|} |t-s|.
    \end{align*}
    Hence, $\tilde{x}$ is Lipschitz continuous on each interval. By definition, $\tilde{x}(t)$ is piecewise linear, so that Lipschitz continuity extends to the whole interval $[0, 1]$, by gluing parts together.
\end{proof}

\subsection{Proof of the Maximal Inequality from Section \ref{sec:max_inequality}}

\begin{proof}
    The proof uses the same arguments as the proof of Theorem 4.4 of \cite{phandoidaen2022}. Note that their arguments are applied to $W_i(f) = f(X_{i, n}, i/n)$ and $W_i^{(k)}(f) = f(X_{i,n}^{(i-k)}, i/n)$, which is real-valued independent of the domain of $f$. We briefly summarize the main arguments.

    \textbf{Truncation.} Let $\phi_{\wedge M}$ denote the clipping function defined by $\phi_{\wedge M}(x) = (x \vee (-M))\wedge M$, and let $\Gc(M) = \{\phi_{\wedge M} \circ f: f \in \Gc\}$. Set $M_n = \tfrac{1}{4}m(n, \sigma, N(\sigma/2, \Gc, V_n))$, such that 
    \begin{equation} \label{eq:truncation}
        \ex[\sup_{f\in\Gc}|\Gb_n(f)|] \le \ex[\sup_{f\in\Gc(M_n)}|\Gb_n(f)|] + \frac{1}{\sqrt{n}} \sum_{i=1}^n \ex[W_i(F \id(F > M_n)]
    \end{equation}
    by construction. 

    \textbf{Bracketing.} Let $\delta_j = 2^{-j} \sigma$ for $j\in\N_0$, choose bracketing covers at scale $\delta_j$, refine into nested partitions, select representatives $\pi_j f$, and define bracket widths $\Delta_j f$. Apply the same truncation-preserving decomposition as in (0.20) of the supplementary material of \cite{phandoidaen2022}, to obtain
    \begin{equation*}
        \sup_{f\in\Gc(M_n)}|\Gb_n(f)|  \le R_1 + R_2 + R_3 + R_4 + R_5,
    \end{equation*}
    where each of the summands $R_i$ is a sum of suprema over finite subclasses generated by $\pi_j f$, increments and truncated ``peaky parts''. The decomposition is purely real-valued and carries over to the Banach space setting.

    \textbf{Finite-class maxima.} Each $R_i$ is bounded via the ``compatibility lemma'' \citep[Lemma A.2 in][]{phandoidaen2022}, which in turn relies on the finite-class maximal inequality in Theorem 4.1 therein. The proof of the latter theorem uses only (i) a decomposition of the real-valued random variables $W_i(f)$ into martingale differences and approximately independent blocks, and (ii) dependence bounds of the form 
    \begin{equation*}
        \sup_{f\in\Gc} \sup_{1\le i\le n} \ex[| W_i(f) - W_i^{(k)}(f)|^2]^{1/2} \le D \Delta(k),
    \end{equation*}
    which are satisfied by Assumption \ref{assump:functions}. The proofs of Theorem 4.1 and Lemma A.2 of \cite{phandoidaen2022} carry over to $B$, since they do not rely on the geometry of $\R^d$ or its finite dimension. Therefore, the bounds on the $\ex[R_i]$ following (0.20) carry over to the Banach space setting and yield
    \begin{equation*}
        \sup_{f\in\Gc(M_n)}|\Gb_n(f)|  \le C \int_0^\sigma \sqrt{1 \vee \log(N(\eps, \Gc, V_n))} \diff \eps,
    \end{equation*}
    for some universal constant $C> 0$. Jointly with \eqref{eq:truncation}, this finishes the proof.
\end{proof}

\subsection{Proofs of the Results from Section \ref{sec:autocov_lrv}}

\begin{proof}(of Proposition \ref{prop:sn_dependence})
	By Proposition 2.6.31 of \cite{hytonen2016}, the conditional expectation commutes with $b$, so that $b(\ex[X | \Cc]) = \ex[b(X) | \Cc]$ for an integrable random variable $X$ and a $\sigma$-algebra $\Cc$. 
    By a telescoping sum argument, 
    \begin{align} \label{eq:telescope_sum_bound_new}
        \ex\big[\big( b(\eps_{i, n}) - b(\tilde{\eps}_{i, n}) \big)^2\big]^{1/2}
		& = \ex\big[\big( b(\eps_{i, n}) - \ex[b(\eps_{i, n}) | \eta_i, \dots, \eta_{i-m}]  \big)^2\big]^{1/2} \\
        & \le \sum_{k=m}^\infty \ex[ |\ex[b(\eps_{i, n})| \eta_i,\dots, \eta_{i-(k+1)}] - \ex[b(\eps_{i, n})| \eta_i,\dots, \eta_{i-k}]|^2]^{1/2}. \notag
    \end{align}
    Define $\eps_{i,n}^\circ$ by replacing $\eta_{i-(k+1)}$ with the independent copy $\eta_{i-(k+1)}^*$. By Jensen's inequality and submultiplicativity of the operator norm,
    \begin{align*}
        & \ex[ |\ex[b(\eps_{i, n})| \eta_i,\dots, \eta_{i-(k+1)}] - \ex[b(\eps_{i, n})| \eta_i,\dots, \eta_{i-k}]|^2]^{1/2} \\
        & = \ex[ |\ex[b(\eps_{i, n} - \eps_{i, n}^\circ)| \eta_i,\dots, \eta_{i-(k+1)}]|^2]^{1/2} \\
        & \le \ex[ \ex[(b(\eps_{i, n} - \eps_{i, n}^\circ))^2| \eta_i,\dots, \eta_{i-(k+1)}]]^{1/2} \\
        & = \ex[(b(\eps_{i, n} - \eps_{i, n}^\circ))^2]^{1/2}
        \le \|b\|_{B^*} \delta_2(H, k+1),
    \end{align*}
    where the last equality follows from the law of iterated expectation. Finally, \eqref{eq:telescope_sum_bound_new} yields $\ex\big[\big( b(\eps_{i, n}) - b(\tilde{\eps}_{i, n}) \big)^2\big]^{1/2}  \le \|b\|_{B^*} \Theta_{m+1}$.

    For the second part of the proposition, by a similar telescoping sum argument as before, 
    \begin{equation*}
        \ex[ \|\eps_{i, n} - \tilde{\eps}_{i, n}\|_B^2]^{1/2} \le \sum_{k=m}^\infty \ex[ \|\ex[\eps_{i, n}| \eta_i,\dots, \eta_{i-(k+1)}] - \ex[\eps_{i, n}| \eta_i,\dots, \eta_{i-k}]\|_B^2]^{1/2}.
    \end{equation*}
    Then, by Proposition 1.10 in \cite{daprato2014}, Jensen's inequality and the law of iterated expectation, 
    \begin{multline*}
        \ex\big[ \|\ex[\eps_{i, n}| \eta_i,\dots, \eta_{i-(k+1)}] - \ex[\eps_{i, n}| \eta_i,\dots, \eta_{i-k}]\|_B^2\big]^{1/2} \\
        % & = \ex\big[ \|\ex[\eps_{i, n}-\eps_{i,n}^\circ| \eta_i,\dots, \eta_{i-(k+1)}] \|_B^2\big]^{1/2} \\
        \le \ex\big[ \ex[\|\eps_{i, n}-\eps_{i,n}^\circ\|_B^2 | \eta_i,\dots, \eta_{i-(k+1)}]\big]^{1/2} 
        %& = \ex[ \|\eps_{i, n}-\eps_{i,n}^\circ\|_B^2]^{1/2}
        \le \delta_2(H, k+1),
    \end{multline*}
    such that 
    \begin{equation}\label{eq:telescope_sum_bound}
        \ex[ \|\eps_{i, n} - \tilde{\eps}_{i, n}\|_B^2]^{1/2} \le \sum_{k=m}^\infty \delta_2(H, k+1) = \Theta_{m+1},
    \end{equation}
    where all inequalities hold uniformly for $1\le i\le n$ and $n\in \N$.
\end{proof}

\begin{proof}(of Proposition \ref{prop:lrv_approximation})
    First, we rewrite the double sum in terms of a single sum as
    \begin{align*}
        \sum_{j_1, j_2=1}^{\ell_n} \ex\big[ b_1\big(H(t, \Fc_{0})\big) b_2\big(H(t, \Fc_{j_2-j_1})\big)\big]
        & = \sum_{j = - \ell_n}^{\ell_n} (\ell_n - |j|) \ex\big[ b_1\big(H(t, \Fc_0)\big) b_2\big(H(t, \Fc_j)\big)\big] \\
        & = \ell_n \sum_{j = - \ell_n}^{\ell_n} (1 - \frac{|j|}{\ell_n}) \ex\big[ b_1\big(H(t, \Fc_0)\big) b_2\big(H(t, \Fc_j)\big)\big]. 
    \end{align*} 
    The expectation commutes with bounded linear operators \citep[see, e.\,g., Remark 6.21 in][]{janson2015}, so that $\ex\big[ b_1\big(H(t, \Fc_0)\big) b_2\big(H(t, \Fc_j)\big)\big] = (b_1 \otimes b_2) \ex[ H(t, \Fc_0) H(t, \Fc_j)]$, and in particular
    \begin{equation*}
        (b_1 \otimes b_2) \sigma^2(t) = \sum_{j \in \Z} \ex\big[ b_1\big(H(t, \Fc_0)\big) b_2\big(H(t, \Fc_j)\big)\big],
    \end{equation*}
    since $H(t, \Fc_i)$ is centered, for $t\in [0, 1]$ and $i\in\Z$. By the triangle inequality, it further follows that 
    \begin{align*}
        & \sup_{t\in [0, 1]} \bigg|\frac{1}{\ell_n}\sum_{j_1, j_2=1}^{\ell_n} \ex\big[ b_1\big(H(t, \Fc_{0})\big) b_2\big(H(t, \Fc_{j_2-j_1})\big)\big] - (b_1 \otimes b_2) \sigma^2(t)\bigg| \\
        & \le \sum_{|j| \le \ell_n} \frac{|j|}{\ell_n} \sup_{t\in [0, 1]} \big| \ex\big[ b_1\big(H(t, \Fc_0)\big) b_2\big(H(t, \Fc_j)\big)\big] \big| + \sum_{|j| > \ell_n} \sup_{t\in [0, 1]} \big|\ex\big[ b_1\big(H(t, \Fc_0)\big) b_2\big(H(t, \Fc_j)\big)\big]\big|.
    \end{align*}
    If $\sup_{t\in[0, 1]} \ex\big[\big|b\big(H(t, \Fc_{0})\big)\big|^2\big]^{1/2} = 0$, for $b\in\{b_1, b_2\}$, the right-hand side equals $0$ by the Cauchy-Schwarz inequality, so the proposition follows trivially. Conversely, suppose that $\sup_{t\in[0, 1]} \ex\big[\big|b\big(H(t, \Fc_{0})\big)\big|^2\big]^{1/2} \neq 0$ for $b\in\{b_1, b_2\}$. In this case, it is sufficient to prove that the right-hand side vanishes.
    
    The first sum on the right-hand side converges to $0$, as $n\to\infty$, whenever
    \begin{equation} \label{eq:abs_summability}
        \sum_{j \in \Z} \sup_{t\in [0, 1]} \big|\ex\big[ b_1\big(H(t, \Fc_0)\big) b_2\big(H(t, \Fc_j)\big)\big]\big| < \infty
    \end{equation}
    by Lebesgue's dominated convergence theorem. For the second sum, convergence to $0$ is trivial, whenever \eqref{eq:abs_summability} holds. Similar to Proposition \ref{prop:sn_dependence}, we can replace $H(t, \Fc_j)$ by some version $H(t, \tilde{\Fc}_j)$ that is independent of $H(t, \Fc_0)$ at the cost $\Theta_{j}$. More specifically, let $\tilde{\Fc}_j = (\dots, \eta_{-1}^*, \eta_0^*, \eta_1, \dots, \eta_j)$, then
    \begin{align*}
        & \big|\ex\big[ b_1\big(H(t, \Fc_0)\big) b_2\big(H(t, \Fc_j)\big)\big]\big|\\
        & \le \big|\ex\big[ b_1\big(H(t, \Fc_0)\big) b_2\big(H(t, \tilde{\Fc}_j)\big)\big]\big| 
        + \ex\big[ b_1^2\big(H(t, \Fc_0)\big)\big]^{1/2} \ex\big[\big( b_2\big(H(t, \tilde{\Fc}_j)\big) - b_2\big(H(t, \Fc_j)\big)\big)^2\big]^{1/2}. 
    \end{align*}
    By independence, the first term is $0$, whereas the second term can be bounded uniformly from above by $C\Theta_j$, for some constant $C\ge 0$, by the same arguments yielding the uniform bound in Proposition \ref{prop:sn_dependence}. Therefore, 
    \begin{equation*}
        \sum_{j \in \Z} \sup_{t\in [0, 1]} \big|\ex\big[ b_1\big(H(t, \Fc_0)\big) b_2\big(H(t, \Fc_j)\big)\big]\big| \le C \sum_{j \in \Z} \Theta_{|j|},
    \end{equation*}
    which is finite by part 1 of Assumption \ref{assump:local_stationarity}.
\end{proof}

\begin{proof}(of Proposition \ref{prop:cov_bound})
    Recall the definition $\Fc_i = (\eta_k)_{k\le i}$, and define $\Fc_{k, i}= (\dots, \eta_{-k}, \eta_{-k+1}^*, \dots, \eta_0^*, \eta_1, \dots, \eta_i)$. With this notation, $\Fc_{0, i} = \Fc_i$. Similarly to \eqref{eq:telescope_sum_bound}, it holds
    \begin{align} \label{eq:telescope_sum_bound2}
        \ex[\|H(t, \Fc_i) - H(t, \Fc_{\infty, i})\|_B^2]^{1/2}
        & \le \sum_{k=1}^\infty \ex[\|H(t, \Fc_{k-1, i}) - H(t, \Fc_{k, i})\|_B^2]^{1/2} \\
        & \le \sum_{k=1}^\infty \delta(H, i+k-1) 
        \le \Theta_i. \notag
    \end{align}
    Now, by the triangle inequality,
    \begin{align*}
        & \| \cov\big(H(t, \Fc_i), H(t, \Fc_0)\big) \|_\pi \\
        & \le \| \ex[H(t, \Fc_{\infty, i}) \otimes H(t, \Fc_0)] \|_\pi + \| \ex[\{H(t, \Fc_i) - H(t, \Fc_{\infty, i})\} \otimes H(t, \Fc_0)] \|_\pi.
    \end{align*}
    By definition of the filtration, $H(t, \Fc_{\infty, i})$ and $H(t, \Fc_0)$ are independent, so that by Proposition \ref{prop:independence}, 
    \begin{equation*}
        \| \ex[H(t, \Fc_{\infty, i}) \otimes H(t, \Fc_0)] \|_\pi 
        = \| \ex[H(t, \Fc_{\infty, i})] \otimes \ex[H(t, \Fc_0)] \|_\pi
        = 0.
    \end{equation*}
    Moreover, by Proposition \ref{prop:bound_proj_norm} and the Cauchy-Schwarz inequality,
    \begin{equation*}
        \| \ex[\{H(t, \Fc_i) - H(t, \Fc_{\infty, i})\} \otimes H(t, \Fc_0)] \|_\pi
        \le \ex[\| H(t, \Fc_i) - H(t, \Fc_{\infty, i}) \|_B^2]^{1/2} \ex[\| H(t, \Fc_0) \|_B^2]^{1/2},
    \end{equation*}
    where the right-hand side can be bounded from above by $\ex[\| H(t, \Fc_0) \|_B^2]^{1/2} \Theta_i \le C \Theta_i$ by the moment bound in Assumption \ref{assump:local_stationarity} and \eqref{eq:telescope_sum_bound2}.
    Similarly, by independence, it holds 
    \begin{equation*}
        \ex\big[b\big(H(t, \Fc_{\infty, i})\big) b\big(H(t, \Fc_0)\big)\big]
        = \ex\big[b\big(H(t, \Fc_{\infty, i})\big)\big] \ex\big[b\big(H(t, \Fc_0)\big)\big]
        = 0,
    \end{equation*}
    and by the Cauchy-Schwarz inequality and \eqref{eq:telescope_sum_bound2},
    \begin{align*}
        & \ex\big[b\big(H(t, \Fc_i) - H(t, \Fc_{\infty, i})\big) b\big(H(t, \Fc_0)\big)\big] \\
        & \le \ex\big[\big|b\big(H(t, \Fc_i) - H(t, \Fc_{\infty, i})\big)\big|^2\big]^{1/2} \ex\big[\big|b\big(H(t, \Fc_0)\big)\big|^2\big]^{1/2} \\
        & \le \|b\|_{B^*}\ex\big[\big\|H(t, \Fc_i) - H(t, \Fc_{\infty, i})\big\|_B^2\big]^{1/2} \ex\big[\big|b\big(H(t, \Fc_0)\big)\big|^2\big]^{1/2} \\
        & \le \|b\|_{B^*} \Theta_i \ex\big[\big|b\big(H(t, \Fc_0)\big)\big|^2\big]^{1/2}.
    \end{align*}
\end{proof}

\begin{proof}(of Lemma \ref{lem:lrv_lipschitz})
    First, let $x_i(t) :=\cov\big( H(t, \Fc_i), H(t, \Fc_0) \big) \in B \hat{\otimes} B$ denote the autocovariance of $H(t, \Fc_i)$, for $t\in [0, 1]$ and $i \in\Z$. By Proposition \ref{prop:bound_proj_norm} and the Cauchy-Schwarz inequality, it holds
    \begin{align*}
        & \| x_i(t) - x_i(s) \|_\pi  \\
        & = \| \ex[ \big(H(t, \Fc_i) - H(s, \Fc_i)\big) \otimes H(t, \Fc_0) \big) - H(s, \Fc_i) \otimes \big( H(s, \Fc_0) - H(t, \Fc_0)\big) ] \|_\pi \\
        & \le \ex[ \| H(t, \Fc_i) - H(s, \Fc_i) \|_B^2]^{1/2} \ex[\| H(t, \Fc_0)\|_B^2]^{1/2} \\
        & \quad + \ex[\| H(s, \Fc_i)\|_B^2]^{1/2} \ex[ \| H(s, \Fc_0) - H(t, \Fc_0) \|_B^2 ]^{1/2},
    \end{align*}
    for any $s, t\in [0, 1]$ and $i \in \Z$. Since $H$ is Lipschitz continuous with respect to the $L^2$-norm, expectations of differences can be bounded from above by $C|t-s|$, for some constant $C\ge 0$. Since the $2+\kappa$ moments of $H(t, \Fc_0)$ are uniformly bounded by assumption, the right-hand side of the previous display can be bounded by $C |t-s|$, for an adequate constant $C$ that does not depend on $i$. Hence, the autocovariances $x_i(t)$ are Lipschitz continuous in $t$.

    Let $(f_{i, n}(t))_{n\in\N}\subset B\otimes B$ be a sequence of finite rank tensors, such that each $f_{i, n}(t)$ is Lipschitz continuous in $t$ with $\sup_{t\in[0, 1]} \|x_i(t) - f_{i, n}(t)\|_\pi \le \frac{\min(1, \Theta_i)}{2^n}$. Such a sequence exists by Proposition \ref{prop:lipschitz_finite_rank}. Then, we may write $x_i(t) = f_{i, 1}(t) + \sum_{n=1}^\infty f_{i, n+1}(t) - f_{i, n}(t)$, where each difference $f_{i, n+1}(t) - f_{i, n}(t)$ is Lipschitz continuous in $t$. 
    
    Let $(\delta_{i, n})_{i \in \Z, n\in\N}$ be a family of positive numbers, such that $\sum_{i\in\Z, n\in \N} \delta_{i, n} < C$, for some constant $C\ge 0$. Since each $f_{i, n}$ is Lipschitz continuous and finite-rank, Lipschitz continuous maps $\bar{y}_{i, n, k}, \bar{z}_{i, n, k}:[0, 1]\to B$ exist, for $k = 1, \dots, K, i \in\Z$ and  $n, K\in \N$, such that $f_{i, n}(t) = \sum_{k=1}^K \bar{y}_{i, n, k}(t) \otimes \bar{z}_{i, n, k}(t)$ with $\|f_{i, n}\|_\pi \ge \sum_{k=1}^K \|\bar{y}_{i, n, k}(t)\|_B \|\bar{z}_{i, n, k}(t)\|_B - \delta_{i, n}$. We may enumerate these sequences and associate $(\bar{y}_{i, n, k})_{i, n, k}$ with $(\tilde{y}_m)_{m\in\N}$, and likewise $(\bar{z}_{i, n, k})_{i, n, k}$ with $(\tilde{z}_m)_{m\in\N}$. Then, 
    \begin{equation} \label{eq:lrv_lipschitz_decomposition2}
        \sigma^2(t) = \sum_{i\in \Z} x_i(t) = \sum_{m\in\N} \tilde{y}_m(t) \otimes \tilde{z}_m(t).
    \end{equation}
    Further, it holds $\sup_{t\in[0, 1]} \|f_{i, 1}(t)\|_\pi \le \sup_{t\in[0, 1]} \|x_i(t)\|_\pi + \frac{\min(1, \Theta_i)}{2} \le C \Theta_i + \frac{\min(1, \Theta_i)}{2}$, by Proposition \ref{prop:cov_bound},  and $\sup_{t\in[0, 1]} \|f_{i, n+1}(t) - f_{i, n}(t)\|_\pi \le \frac{\min(1, \Theta_i)}{2^{n+1}} + \frac{\min(1, \Theta_i)}{2^n} \le \frac{\min(1, \Theta_i)}{2^{n-1}}$ by construction. Therefore 
    \begin{align}
        & \sup_{t\in[0, 1]} \sum_{m=1}^\infty \| \tilde{y}_m(t)\|_B \|\tilde{z}_m(t)\|_B \notag \\
        & \le \sum_{i\in\Z} \bigg( \sup_{t\in[0, 1]} \|f_{i, 1}(t)\|_\pi + \sum_{n=1}^\infty \sup_{t\in[0, 1]} \|f_{i, n+1}(t) - f_{i, n}(t)\|_\pi\bigg) + \sum_{i\in\Z, n\in\N} \delta_{i, n} \label{eq:l1_summability}\\
        & \le C \sum_{i\in\Z} \Theta_i + \sum_{i\in\Z} \sum_{n=1}^\infty \frac{\min(1, \Theta_i)}{2^{n-1}} + C < \infty. \notag
    \end{align}
    In particular, $\sigma^2(t)$ is continuous.
    We now construct sequences $(y_m)_{m\in\N}$ and $(z_m)_{m\in\N}$, satisfying 
    \begin{equation} \label{eq:l2_summable_coeffs}
        y_m(t) \otimes z_m(t) = \tilde{y}_m(t) \otimes \tilde{z}_m(t),
    \end{equation} 
    and \eqref{eq:lrv_l2_summability}. Then, \eqref{eq:lrv_lipschitz_decomposition} follows from \eqref{eq:lrv_lipschitz_decomposition2}. 

    Let $a_m = \sup_{t\in[0, 1]}\|\tilde{y}_m(t)\|_B  \|\tilde{z}_m(t)\|_B, A_m = \sup_{t\in[0, 1]} \|\tilde{y}_m(t)\|_B, B_m = \sup_{t\in[0, 1]} \|\tilde{z}_m(t)\|_B$, and $\delta_m = \frac{a_m}{2^m (1 + A_m + B_m)}$. With these definitions, $\phi_m(t) := \sqrt{\frac{ \|\tilde{z}_m(t)\|_B + \delta_m}{ \|\tilde{y}_m(t)\|_B+\delta_m}}$ is Lipschitz continuous. Define $y_m(t) := \phi_m(t) \tilde{y}_m(t)$ and $z_m(t) := \phi_m^{-1}(t) \tilde{z}_m(t)$. Per construction, $y_m$ and $z_m$ are Lipschitz and satisfy \eqref{eq:l2_summable_coeffs}, and hence \eqref{eq:lrv_lipschitz_decomposition}. Moreover,
    \begin{align*}
        \| y_m(t) \|_B^2 
        & = \|\tilde{y}_m(t)\|_B^2 \frac{ \|\tilde{z}_m(t)\|_B + \delta_m}{ \|\tilde{y}_m(t)\|_B+\delta_m} \\
        & \le \|\tilde{y}_m(t)\|_B \|\tilde{z}_m(t)\|_B + \|\tilde{y}_m(t)\|_B \delta_m \\
        & \le a_m + A_m \delta_m 
        \le (1+2^{-m}) a_m.
    \end{align*}
    By \eqref{eq:l1_summability} and definition of $a_m$, it follows that $\sum_{m\in\N} \| y_m(t) \|_B^2 < \infty$, and by the same arguments, $\sum_{m\in\N} \| z_m(t) \|_B^2 < \infty$.
\end{proof}